\documentclass[reqno,11pt]{amsart}
\usepackage{amsthm,amsfonts,amssymb,euscript,mathrsfs,graphics,color,amsmath,amssymb,latexsym,marginnote, bigints}
\usepackage{soul}
\usepackage{graphicx}
\usepackage{multicol,bbm}
\usepackage{stmaryrd}
\usepackage{amsthm,amsfonts,amssymb,euscript,mathrsfs,graphics,color,amsmath,amssymb,latexsym,marginnote,esint}
\usepackage[hidelinks]{hyperref}
\usepackage{comment}
\usepackage[margin=1.0in]{geometry}
\numberwithin{equation}{section}

\def\al{\alpha}

\def\ga{\gamma}

\def\De{\Delta}
\def\ep{\epsilon}

\def\om{\omega}
\def\Om{\Omega}
\def\th{\theta}

\def\ph{\varphi}


\def\FF{{\mathcal F}}

\def\HH{{\mathcal H}}

\def\R{\mathbb{R}}
\def\Z{\mathbb{Z}}
\def \T {\mathbb{T}}
\def \D {\mathbb{D}}

\def \I {I} 
\def \P {\mathbb{P}}


\def\pr{{\partial}}

\def\c{\cdot}

\def\bar{\overline}

\def\div{{\,\nabla \c\,}}
\def\curl{{\,\mbox{curl}\,}}

\def\tr{\mbox{tr}}

\newcommand{\parent}[1]{\left ( {#1} \right)}
\newcommand{\corch}[1]{\left[{#1}\right]}
\newcommand{\llav}[1]{\left\lbrace{#1} \right \rbrace} 

\newcommand{\abs}[1]{\left \vert {#1} \right \vert}
\newcommand{\norm}[1]{\|{#1}\|}
\newcommand{\normp}[2]{{\norm{#1}}_{L^#2}}
\newcommand{\produ}[2]{\left<{#1},{#2}\right>} 

\newcommand{\apli}[3]{{#1}:\apl{#2}{#3}} 
\newcommand{\apl}[2]{{#1}\longrightarrow {#2}}

\newcommand{\mat}[1]{{\partial_t{#1}}+u \cdot \nabla{#1}}
\newcommand{\matt}[1]{u \cdot \nabla{#1}}

\definecolor{bluegreen}{rgb}{0.0, 0.3, 0.9}

\usepackage[normalem]{ulem}

\newtheorem{theorem}{Theorem}[section]
\newtheorem{lemma}[theorem]{Lemma}
\newtheorem{proposition}[theorem]{Proposition}

\newtheorem{definition}[theorem]{Definition}
\newtheorem{remark}[theorem]{Remark}

\newcommand{\Lp}[1]{L^{#1}(\R^2)}
\newcommand{\LpT}[1]{L^{#1}(\T^2)}
\def\Hgam{\dot{H}^{\gamma+s}(\R^2)}
\def\HgamT{H^{\gamma+s}(\T^2)}
\def\Cga{\dot{C}^\ga}

\def \srho{\sqrt{\rho}}
\def \sr {\sqrt{\rho_0}}
\def \smu {\sqrt{\mu}~}
\def \smuo {\sqrt{\mu_0}~}

\def \du{D_t u}
\def \dv{D_t v}
\def \dr{D_t \rho}
\def \dm {D_t \mu}

\title[On 2D N-S free boundary: nonnegative density and small viscosity contrast]{On 2D Navier-Stokes free boundary: \\nonnegative density and small viscosity contrast}
\author{Francisco Gancedo, Eduardo Garc\'ia-Ju\'arez, Paula Luna-Velasco}

\address{F. Gancedo: Departamento de An\'alisis Matem\'atico \& IMUS, Universidad de Sevilla, C/Tarfia s/n, Campus Reina Mercedes, 41012, Sevilla, Spain and
School of Mathematics, Institute for Advanced Study, 1 Einstein Dr., Princeton, NJ 08540, USA. \href{mailto:fgancedo@us.es}{fgancedo@us.es}}

\address{E. García-Juárez: Departamento de An\'alisis Matem\'atico \& IMUS, Universidad de Sevilla, C/Tarfia s/n, Campus Reina Mercedes, 41012, Sevilla, Spain. \href{mailto:egarcia12@us.es}{egarcia12@us.es}
}

\address{P. Luna-Velasco: Departamento de Análisis Matemático, Universidad de Granada, Avenida Fuen\-te\-nue\-va S/N, 18071, Granada, Spain and Universidad de Sevilla, C/Tarfia s/n, Campus Reina Mercedes, 41012, Sevilla, Spain.
\href{mailto:pluna@us.es}{pluna@us.es}}

\setlength\parindent{0pt}


\begin{document}

\begin{abstract}
This paper is concerned with the evolution of two incompressible, immiscible fluids in two dimensions governed by the inhomogeneous Navier-Stokes equations.  We prove global-in-time well-posedness, establishing the preservation of the natural $C^{1+\gamma}$ Hölder regularity of the free boundary, for $0<\gamma<1$. This is the first result that allows for nonnegative density driven by a low-regularity initial velocity, while also remaining valid in the presence of a small viscosity jump.
\end{abstract}

 \maketitle
 
\setcounter{tocdepth}{1}
\tableofcontents


 
\section{Introduction}
The motion of two-dimensional, incompressible, viscous fluids with variable density is governed by the inhomogeneous Navier--Stokes equations:
\begin{equation}\label{VJ}
\left\{
    \begin{aligned}
         D_t\rho&=0, \\
         \rho D_tu &= \nabla \c \parent{\mu \D u - P\I},  \\
         \nabla \c u &=0,\\
         (\rho,u)|_{t=0}&=(\rho_0,u_0).
    \end{aligned}
    \right.
\end{equation}
Above, $u=u(t,x)\in\mathbb{R}^2$ is the velocity field, and the scalar functions $\rho=\rho(t,x)$, $\mu=\mu(t,x)$, $P=P(t,x)$ denote the fluid's density, viscosity, and pressure, respectively. The  total derivative is denoted by the operator $D_t=\partial_t + u\cdot\nabla $, the tensor $\D u$ denotes the symmetric part of the gradient, 
$\D_{j,k} u=\partial_ju_k+\partial_k u_j,\, j,k\in\{1,2\},$
and $\I$ is the identity matrix in $\R^2$. 
The viscosity is given by  $\mu=\tilde{\mu}(\rho)$, where $\tilde{\mu}$ is a smooth function, and  is therefore transported with the flow:
\begin{equation*}
    D_t\mu=0.
\end{equation*}

In this paper, the main interest lies in the free boundary problem that arises when considering two immiscible fluids with different densities and viscosities. Suppose that initially the fluids occupy a bounded domain $D_0\subset \R^2$ and its open complement $D_0^c=\R^2\setminus \overline{D_0}$. 
Then, this scenario can be modeled using \eqref{VJ}, considering an initial density function of the form
\begin{equation}\label{initialdensitypatch}
\rho_0(x)=\rho^{in}_0(x)\mathbbm{1}_{D_0}(x)+\rho^{out}_0(x) \mathbbm{1}_{D_0^c}(x),    
\end{equation}
with $\mathbbm{1}_A$ the indicator function of the set $A$, and accordingly for the initial viscosity
\begin{equation}\label{mupatch}
\mu_0(x)=\mu^{in}_0(x) \mathbbm{1}_{D_0}(x)+\mu^{out}_0(x)\mathbbm{1}_{D_0^c}(x).
\end{equation}

Since both fluids evolve with the velocity field via the particle trajectories 
\begin{equation}\label{traj}
  \left\{
  \begin{aligned}
\partial_tX(t,y)&=u(t,X(t,y)),\\
X|_{t=0}&=y,
    \end{aligned}\right.
\end{equation}
the density structure is preserved, $\rho(t,X(t,y))=\rho_0(y)$. 
The classical free boundary physical conditions without capillarity are given by 
\begin{equation*}
    \left.\begin{array}{r}
         \llbracket u \rrbracket = 0\\
       \llbracket \mu \D u- P\I  \rrbracket n = 0 
    \end{array}
\right\}\text{  on  }\, \partial D(t),\mbox{  with  }D(t)=X(t,D_0),\\
\end{equation*}
where $\llbracket \cdot \rrbracket$
denotes the difference between the limiting values on either side of the domain
$D(t)$ and $n$ is the unit normal to $\partial D(t)$. These conditions are recovered by interpreting \eqref{VJ} in a weak sense, together with the regularity of the solution (see \cite{Gancedo18}).
A central question is then the dynamics and regularity of the boundary $\partial D(t)$ in the physically relevant setting where the initial kinetic energy is finite
$$
\int \rho_0(x)\abs{u_0(x)}^2dx < \infty.
$$

\subsection{State of the art.}
The system \eqref{VJ} was initially studied under the assumption of constant viscosity and positive density. The first results concerning the existence of strong solutions for smooth data appeared in \cite{Ladyzhenskaya78}. In the case of nonnegative density, the result \cite{Simon90} established the global existence of weak solutions with finite energy satisfying the classical weak energy inequality,
\begin{equation}\label{energy_ineq}
        \norm{\srho u}^2 + 2\int_{0}^{t}\norm{\smu \D u}^2 d\tau \leq \norm{\sqrt{\rho_0}u_0}^2.
\end{equation}
This result was later extended in \cite{Lions} to the case of variable viscosity. In this book, the so-called density patch problem was introduced, where the initial density is given by
$$
\rho_0(x)=\mathbbm{1}_{D_0}(x).
$$
The question posed was whether the regularity of \( D(t) \) is preserved over time. Theorem 2.1 in \cite{Lions} establishes that the density remains a volume-preserving patch; however, the previous question was left open.

\medskip
Global-in-time regularity for Navier-Stokes free boundary problems has been extensively studied, particularly considering the continuity of the stress tensor at the free boundary (see e.g. \cite{DenisovaSolonnikov21}). Local existence results were first obtained for one fluid in \cite{Solonnikov77, Beale81}, while global-in-time existence was proved for an almost horizontal viscous fluid subject to gravity lying above a bottom in \cite{Sylvester90, Tani95}. The decay rate of the solution in such scenarios has been analyzed in \cite{Guo2013, GuoTice2013, GuoTice13}. In the two-fluid case, global well-posedness and decay near the flat equilibrium have been established in \cite{Wang14}. These global-in-time well-posedness  results rely on 
a coordinate transformation to fix in time the free boundary, which requires smallness conditions on the initial data.

\medskip
A different perspective is to consider fluids using the inhomogeneous Navier-Stokes system. In the case with \textit{constant viscosity}, i.e. $\mu=1$ in \eqref{VJ}, global well-posedness was proved for continuous densities bounded away from zero in \cite{Danchin04, Danchin06}. The works \cite{Danchin12, Danchin13} allowed the density to be discontinuous across $C^1$ interfaces, although  assumptions of sufficiently small jumps and small initial velocity were still needed. The initial regularity required for the velocity was then lowered in \cite{Huang13}. Maintaining a lower and upper bound for the density, global well-posedness for $u_0\in H^s(\mathbb{R}^2)$, $s\in(0,1)$, was proved in \cite{Paicu13} without smallness conditions. In \cite{Liao16}, the persistence of \( W^{k,p}(\mathbb{R}^2) \) regularity of the boundary, for \( k \geq 3 \) and \( p \in (2,4) \), is established for initial patches of the form \eqref{initialdensitypatch}, where \( \rho_0^{in} \) and \( \rho_0^{out} \) are distinct but close positive constants.
 The propagation of  regularity for small density jumps and small initial velocity with critical regularity was shown in \cite{Danchin17}. Later, in \cite{Liao19}, the smallness condition in \cite{Liao16} was removed. In \cite{Gancedo18}, the propagation of \( C^{1+\gamma} \) regularity of the interface was proved for initial velocity with sharp regularity, \( u_0 \in H^{\gamma+s}(\mathbb{R}^2) \), where \( \gamma \in (0,1) \), \( s \in (0,1-\gamma) \), and without any smallness assumptions.

\medskip
Allowing the density to be merely nonnegative introduces significant challenges in the analysis of the system \eqref{VJ}. In particular, a main difficulty is the loss of parabolicity where $\rho(t,x) = 0$. In such regions, the fluid evolves by the Stokes's law, exhibiting an elliptic rather than parabolic structure. This degeneracy prevents the use of standard energy methods. The case of nonnegative density was first studied \cite{Cho04} by requiring
 the initial density to be sufficiently smooth, which precludes patch-type data. Moreover, the initial velocity $u_0$ needed to satisfy a compatibility condition
\begin{equation*}
    -\Delta u_0 + \nabla P_0 = \sr g,
\end{equation*}
for some $g \in L^2(\Omega)$ and $P_0 \in H^1(\Omega)$ in a bounded domain $\Omega\subset\mathbb{R}^2$. 
It was only very recently when the first result for just bounded nonnegative density appeared \cite{Danchin19}. The result assumes $u_0 \in H^1(\Omega)$, $\Omega = \mathbb{T}^2$ or a smooth bounded domain, without any compatibility condition. Taking advantage of the gain of regularity for the velocity,  uniqueness is proved using a Lagrangian approach (see also \cite{Danchin12}). Since the proof used certain Poincaré-type inequalities, it was not amenable for the whole space $\mathbb{R}^2$. 
Recovering the compatibility condition for the initial velocity,  the authors in \cite{Prange24} were the first to be able to deal with the density patch problem in the whole space $\mathbb{R}^2$ with nonnegative density. Then, the work \cite{Hao24a} removed the compatibility condition, solving the problem with $u_0\in \dot{H}^1(\mathbb{R}^2)$ and nonnegative, bounded density. The uniqueness part required additional conditions on the density, satisfied in particular by density patches. As in \cite{Prange24}, the proof of uniqueness follows a duality argument, inspired by works in the compressible case (see \cite{Hoff06, DanchinMucha23}). Interestingly, the authors also proved in a separate paper \cite{Hao24b} the uniqueness of weak solutions with critical initial velocity $u_0\in L^2(\mathbb{R}^2)$, only when the density is bounded from below.

\medskip
In the first part of this paper, we will solve the inhomogeneous Navier-Stokes for nonnegative density and initial velocity $u_0$ with low regularity (see Theorem \ref{Th1}). Therefore, our results address the degeneracy inherent in the problem, proving global existence and uniqueness under minimal assumptions on the initial density and velocity regularity, while recovering the appropriate parabolic regularity. In particular, $u_0\in\dot{H}^{\gamma+s}(\mathbb{R}^2)$, where $\ga\in(0,1)$, $s\in(0,1-\ga)$, which is sharp in the Sobolev scale for the propagation of $C^{1+\gamma}$ regularity of $\partial D(t)$. This allows us to solve the free boundary Navier–Stokes problem for fluids of different densities, not necessarily bounded from below. The regularity we assume on the initial velocity is the lowest known to ensure uniqueness without requiring a strictly positive density.

\medskip
We next consider the case of \textit{density-dependent viscosity}. Under the assumptions of merely bounded density, $u_0 \in H^1 {(\T^2)}$, and sufficiently small viscosity variation in $L^\infty(\T^2)$,  the solutions  where shown to satisfy that $u \in L^\infty(0,T; H^1(\T^2))$, $\srho \partial_t u \in L^2(0,T; L^2(\T^2))$, and $\rho, \mu \in L^\infty(0,T;L^\infty(\T^2))$ for all $T>0$ \cite{Desjardins97}. However, this regularity is not sufficient to propagate the regularity of interfaces.
Assuming \textit{positive} and bounded density, the first result showing propagation of regularity for patches with variable viscosity was given in \cite{Paicu20}.
Considering $u_0\in H^1(\mathbb{R}^2)$, the authors were able to propagate $H^{5/2}$ regularity of $\partial D(t)$, imposing additional striated regularity for the initial viscosity and smallness in the jump of viscosities.
 In \cite{Gancedo23}, the initial regularity was lowered to $u_0 \in \Hgam$ to propagate $C^{1+\gamma}$ regularity of $\partial D(t)$.
 Without small viscosity jump, global-in-time well-posedness can be obtained for small initial velocity \cite{HYZ2020}. More recently, global-in-time well-posedness has been shown for initial data satisfying a smallness condition that involves the viscosity jump together with the initial velocity \cite{Liao24}.

\medskip In the second part of the paper, we obtain the first result on the propagation of regularity for the free boundary Navier--Stokes equations for fluids with different densities and viscosities, without assuming a positive lower bound on the density and without any smallness condition on the velocity (see Theorem~\ref{th2}). The regularity of the initial data is optimal in the Sobolev scale for propagating the natural \(C^{1+\gamma}\) regularity and for obtaining the optimal parabolic regularity. The techniques developed in \cite{GANCEDO22} were not sufficient to address the nonnegative density case.

\subsection{Main Results} In this paper, we prove global-in-time well-posedness for the two dimensional density patch problem \eqref{VJ} in $\R^2$ with nonnegative density and constant viscosity. We also prove global well-posedness of \eqref{VJ} with nonnegative density and small viscosity jump in $\T^2$. We study the evolution of the two fluids as well as the evolution of the interface. 

The initial density $\rho_0$ is assumed to satisfy the uniform bound
$$
0\leq \rho_0(x)\leq \rho^M,
$$
allowing in particular for the presence of regions where $\rho_0(x) = 0$, where the fluid is governed by the Stokes' law.

\medskip
Specifically in the $\R^2$ setting, we further assume that $\rho_0$ satisfies one of the following conditions, as introduced in \cite{Hao24a}:
\begin{equation*}\tag{A}\label{H1}
    (1+\abs{x}^2)\rho_0\in\Lp{1},
\end{equation*}
\begin{equation*}\tag{B}\label{H2}
    \text{ There exists }R_0, c_0\in (0, \infty) \text{ such that } \int_{B(x_0,R_0)}\rho_0(x)dx\geq c_0 >0\, \textup{ for all } x_0\in\R^2.
\end{equation*}
We then establish the following result:
\begin{theorem}\label{Th1}
      Let $\mu=1$, $\parent{\rho_0,u_0}$ such that $0\leq \rho_0 \leq \rho^M$, $\rho_0 \not\equiv 0$, $\rho^M>0$ and satisfies \eqref{H1} or \eqref{H2}. Moreover, let $\sr u_0\in \Lp{2}$, $u_0\in \Hgam$ a divergence-free vector with $\ga\in(0,1)$, $s\in(0,1-\ga)$. 
    Then, the system \eqref{VJ} has a unique global solution $(\rho,u)$ satisfying the weak energy inequality \eqref{energy_ineq}, $0\leq \rho(t,x) \leq \rho^M$, and the following properties
     \begin{center}
     \begin{multicols}{2}
        $t^{\frac{1-(\ga+s)}{2}}\sqrt{\rho}D_t u\in L^2(0,T;\Lp{2})$,\\ 
        $t^{\frac{2-(\ga+s)}{2}}\sqrt{\rho}\du\in L^\infty(0,T;\Lp{2})$,\\
        $\nabla u \in L^1(0,T;\Lp{\infty})$,\\
        $t^{\frac{1-(\ga+s)}{2}}\nabla{u}\in L^\infty(0,T;\Lp{2})$, \\ 
        $t^{\frac{2-(\ga+s)}{2}}\nabla D_t u\in L^2(0,T;\Lp{2})$,\\
        $\nabla u \in L^1(0,T;\Cga(\R^2))$.
     \end{multicols}
     \end{center}
 \end{theorem}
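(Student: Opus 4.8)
The plan is to obtain the solution as a limit of a non-degenerate approximation, to establish a hierarchy of time-weighted a priori estimates that are uniform in the approximation parameter and calibrated to the heat-flow smoothing of $\Hgam$ data, and to prove uniqueness in Lagrangian variables by a duality argument. For the first step, replace $\rho_0$ by $\rho_0^\e=\rho_0+\e>0$ (mollifying $u_0$ as well), so that \eqref{VJ} becomes uniformly parabolic; for such data, global strong solutions $(\rho^\e,u^\e)$ with $\rho^\e$ bounded below, a $C^{1+\ga}$ interface, and $\nabla u^\e\in L^1_t C^\ga$ are available from the variable-density theory with positive density (cf.\ \cite{Danchin04,Paicu13,Gancedo18}). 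The whole argument then reduces to bounds on $(\rho^\e,u^\e)$ depending only on $\rho^M$, $\norm{\sr u_0}_{\Lp2}$ and $\norm{u_0}_{\Hgam}$, and not on $\e$.

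For the a priori estimates, pairing the momentum equation with $u$ gives the energy balance \eqref{energy_ineq}. Pairing $\rho\du=\Delta u-\nabla P$ with $\du$ instead, and controlling the pressure and convective contributions by the divergence-free identities (note $\div\,\du$ is a quadratic function of $\nabla u$), the two-dimensional interpolation $\norm{\nabla u}_{L^4}^2\les\norm{\nabla u}_{\Lp2}\norm{\nabla^2u}_{\Lp2}$, and the Stokes estimate $\norm{\nabla^2 u}_{\Lp2}\les\norm{\srho\du}_{\Lp2}$ — which holds with $\mu=1$ even through the vacuum, with constant depending only on $\rho^M$, since $\Delta u=\rho\du+\nabla P$ recovers $\nabla^2u$ up to the pressure — one reaches a differential inequality that, multiplied by $t^{1-(\ga+s)}$ (the exponent forced by $\norm{e^{t\Delta}u_0}_{\dot H^1}\les t^{-(1-(\ga+s))/2}\norm{u_0}_{\Hgam}$) and closed by Gr\"onwall, produces $t^{\frac{1-(\ga+s)}{2}}\srho\du\in L^2(0,T;\Lp2)$ and $t^{\frac{1-(\ga+s)}{2}}\nabla u\in L^\infty(0,T;\Lp2)$. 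Applying $D_t$ to the momentum equation and pairing again with $\du$, using the commutators $[D_t,\nabla]$ and the divergence-free structure, yields the second level $t^{\frac{2-(\ga+s)}{2}}\srho\du\in L^\infty(0,T;\Lp2)$ and $t^{\frac{2-(\ga+s)}{2}}\nabla\du\in L^2(0,T;\Lp2)$, the weight now matching $\norm{\partial_t e^{t\Delta}u_0}_{\Lp2}\les t^{-(2-(\ga+s))/2}\norm{u_0}_{\Hgam}$.

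To upgrade to $\nabla u\in L^1(0,T;\Lp\infty)$ and $\nabla u\in L^1(0,T;\Cga(\R^2))$, read the momentum equation as the stationary Stokes system $-\Delta u+\nabla P=-\rho\du$: since $\rho$ is a patch whose interface $X(t,\partial D_0)$ remains $C^{1+\ga}$ along the flow, the source $\rho\du$ has a jump across a $C^{1+\ga}$ curve, and singular-integral (potential-theoretic) estimates for such data — as in \cite{Gancedo18} — give $\nabla u\in L^1_t\Cga$, while interpolation with the weighted $\Lp2$ bounds of the previous step gives $\nabla u\in L^1_t\Lp\infty$; in the region where $\rho$ is small the source is correspondingly small, so the vacuum does not obstruct this bootstrap. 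Because the $C^{1+\ga}$ regularity of $\partial D(t)$ is in turn driven by $\int_0^T\norm{\nabla u}_{\Cga}\,dt$ through the flow equation $\partial_t\nabla X=(\nabla u\circ X)\,\nabla X$, these two facts must be established simultaneously by a continuity argument in time. This interlacing of elliptic Stokes regularity (which survives the vacuum) with parabolic smoothing (available only on $\{\rho>0\}$), carried out uniformly in $\e$ and with time singularities that are merely integrable near $t=0$ because $u_0$ lies only in $\Hgam$ with $\ga+s<1$, is the main obstacle of the proof.

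Finally, the uniform bounds and Aubin--Lions compactness yield a subsequence converging to a solution $(\rho,u)$ of \eqref{VJ}, with $\rho$ solving the transport equation and remaining a $C^{1+\ga}$ patch with $0\le\rho\le\rho^M$, and with $u$ inheriting all the listed estimates by weak lower semicontinuity. Uniqueness is obtained in Lagrangian coordinates, in which the density is frozen, so that the difference of two solutions solves a linear Stokes--transport system; since the domain is $\R^2$ one cannot use a naive Poincar\'e inequality, and hypotheses \eqref{H1}--\eqref{H2} enter exactly here, furnishing a Desjardins--Lions-type weighted inequality controlling $\norm{u}_{\Lp2}$ by $\norm{\srho u}_{\Lp2}$ at the relevant scale; the estimate is then closed by the duality argument (a dual backward Stokes problem) of \cite{Prange24,Hao24a}.
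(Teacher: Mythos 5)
There is a genuine gap at the heart of your second step. You propose to take the differential inequality obtained by testing with $\du$, ``multiply by $t^{1-(\ga+s)}$'' and ``close by Gr\"onwall''. This does not close. Writing the inequality as
\begin{equation*}
\norm{\srho \du}^2+\frac{d}{dt}\norm{\nabla u}^2 \leq C \norm{\nabla u}^2\norm{\nabla u}^2,
\end{equation*}
multiplication by $t^{1-(\ga+s)}$ produces, from the derivative of the weight, the error term $\int_0^t \tau^{-(\ga+s)}\norm{\nabla u(\tau)}^2\,d\tau$, and the only information available at this stage is $\nabla u\in L^2_tL^2$ from the energy inequality; since $\tau^{-(\ga+s)}$ blows up at $\tau=0$ and $\norm{\nabla u(\tau)}^2$ is merely integrable there, this integral is not controlled. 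The heat-flow heuristic identifies the correct exponent but does not furnish a closable estimate. The paper's actual mechanism is different: one proves \emph{two} closable estimates for the linearized problem --- one with weight $t^{1}$ for data $v_0\in L^2$ (where the error term is exactly $\int_0^t\norm{\nabla v}^2d\tau$, controlled by energy) and one with weight $t^{0}$ for data $v_0\in\dot H^1$ --- and then interpolates the solution operator $v_0\mapsto(\nabla v,\srho D_tv)$, using real interpolation of the non-Banach scale $\dot H^s(\R^2)$ (Proposition \ref{thfrances}) together with the Stein--Weiss theorem for $L^2$ spaces with change of measure $t\,dt$ versus $dt$ in time (Proposition \ref{Stein-Weiss}). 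This interpolation between a Banach and a non-Banach space, and between different time measures, is precisely the step the paper flags as the main novelty for low-regularity data, and your proposal skips it.

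Two smaller points. First, for $\nabla u\in L^1_TL^\infty\cap L^1_T\Cga$ the paper does not use the interface at all and needs no continuity/bootstrap argument interlacing $\partial D(t)\in C^{1+\ga}$ with the velocity: it proves $t^{\frac{2-(\ga+s)}{2}}\nabla^2u\in L^2_TL^m$ for all large $m$ directly from the Stokes estimate $\norm{\nabla^2u}_{L^m}\les\norm{\rho\du}_{L^m}$ combined with the Poincar\'e-type Lemma \ref{Lemma3.7} (this is where hypotheses \eqref{H1}/\eqref{H2} enter the \emph{existence} proof, not only the uniqueness as you claim), and then concludes by Gagliardo--Nirenberg; the $C^{1+\ga}$ regularity of $\partial D(t)$ is a corollary (Remark \ref{remark1.2}), not an ingredient. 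Second, the uniqueness in the paper is a duality argument carried out in Eulerian variables following \cite{Hao24a} --- backward dual problem, the logarithmic trilinear estimates of Lemmas \ref{tri1}--\ref{tri2}, and Osgood's lemma --- not a Lagrangian argument; the Lagrangian route is reserved for the variable-viscosity Theorem \ref{th2}. Your description conflates the two, and omits the Osgood/logarithmic structure without which the duality estimate does not close at this regularity.
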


 \begin{remark}\label{remark1.2}
Theorem~\ref{Th1} above allows us to consider the previously described free boundary setting. Indeed, let $D_0 \subset \mathbb{R}^2$ be a bounded domain of class $C^{1+\gamma}$, $\gamma \in (0,1)$, and let $0 \leq \rho_0 \not\equiv 0$ be the initial density given by \eqref{initialdensitypatch} with $\rho_0^{in}\in C^{\gamma}(\bar{D_0})$, $\rho_0^{out}\in C^{\gamma}(\mathbb{R}^2\setminus D_0)$. Then \eqref{VJ} has a unique solution $(\rho,u)$ that satisfies the same estimates as in Theorem \ref{Th1}. Moreover, the regularity of the velocity provides
    $$    \rho(t,x)=\rho^{in}(t,x)\mathbbm{1}_{D(t)}(x)+\rho^{out}(t,x)\mathbbm{1}_{D(t)^{c}}(x),\quad\mbox{where}\quad\rho(t,X(t,y))=\rho_0(y),$$
    $D(t)=X(t,D_0)$ with $X$ the particle trajectories \eqref{traj}  and $\partial D(t) \in C([0,T],C^{1+\ga})$.
 \end{remark}

\begin{remark} 
  Theorem~\ref{Th1} also applies in the periodic setting $\mathbb{T}^2$, in which case conditions~\eqref{H1} and~\eqref{H2} are automatically satisfied for bounded densities.
\end{remark}

The proof of this theorem is detailed in Section \ref{sect:INS constant vis.}.  
The first part is devoted to deriving a priori estimates. The lack of a lower bound for the density prevents the $L^2$ control of the velocity via standard energy estimates. In \cite{Danchin19, Prange24, Hao24a}, the  use of Poincaré-type inequalities (see Lemma \ref{Lemma3.7} or Lemma \ref{LemDanchin2}) is crucial to overcome this difficulty and to obtain the boundedness of $L^p$ norms. These inequalities require the control of the gradient, which  is bounded uniformly in time for initial data in $\dot{H}^1$. 
However, for initial data in $\dot{H}^{\gamma+s}$, applying these inequalities results in a loss of $1-\gamma-s$  derivatives. 
Due to the lower regularity of the initial velocity, we introduce time weights and apply an interpolation argument to propagate the regularity. A main difficulty arises from the fact that the interpolation is carried out between a Banach and a non-Banach space, a situation that is rarely addressed in the literature. Interpolations between different type of measures are needed to obtain the appropriate estimates. 
Another key step in the proof consists in establishing an $L^1$-in-time, Lipschitz-in-space estimate for the velocity field. To obtain such a bound, we rely on the explicit structure of the initial density, assumptions \eqref{H1} or \eqref{H2}, which allow us to control the spatial $L^m$ norm of the Laplacian of the velocity. In the case  \eqref{H1}, the available control of the velocity is only local in space, which forces us to adopt a localized regularization strategy locally in time and space. Finally, uniqueness is established through a duality argument.

\medskip We now consider the case in which the viscosity depends smoothly on the density; that is, $\mu_0 = \tilde{\mu}(\rho_0)$ for some smooth function $\tilde{\mu}$. In particular, the interest lies in discontinuous initial viscosities such as in \eqref{mupatch}.

\begin{theorem}\label{th2}
    Let $D_0\subset \T^2$ be a domain whose boundary $\partial D_0\in C^{1+\ga}$, $0<\ga<1$, is non-self-intersecting. Let $\rho_0^{in}\in C^{\ga}(\bar{D_0})$, $\rho_0^{out}\in C^\ga(\T^2 \setminus D_0)$, $0\leq \rho_0 \leq \rho^M$,
    and  $\mu_0=\tilde{\mu}(\rho_0)$ with $\tilde{\mu}$ smooth, $0< \mu^m\leq \mu_0\leq \mu^M$. Let the initial density be given by \eqref{initialdensitypatch} and $\sr u_0 \in \LpT{2}$, $u_0\in \HgamT$, $s\in(0,1-\ga)$ be a divergence-free vector field. Then, there exists $\delta>0$ such that if 

    \begin{equation}
    \Big\| 1 - \frac{\mu_0}{\bar{\mu}} \Big\|_{L^\infty} < \delta, \quad \text{with } \bar{\mu} = \frac{\mu^m + \mu^M}{2},
    \end{equation}
    the system \eqref{VJ} admits a unique global solution $(\rho,u)$ with $\mu = \tilde{\mu}(\rho)$ satisfying the weak energy inequality \eqref{energy_ineq} and
 \begin{center}
     \begin{multicols}{2}
        $t^{\frac{1-(\ga+s)}{2}}\sqrt{\rho}D_t u\in L^2(0,T;\LpT{2})$,\\ 
        $t^{\frac{2-(\ga+s)}{2}}\sqrt{\rho}\du\in L^\infty(0,T;\LpT{2})$,\\
        $\nabla u \in L^1(0,T;\LpT{\infty})$,\\
        $t^{\frac{1-(\ga+s)}{2}}\nabla{u}\in L^\infty(0,T;\LpT{2})$, \\ 
        $t^{\frac{2-(\ga+s)}{2}}\nabla D_t u\in L^2(0,T;\LpT{2})$,\\
        $\nabla u \in L^1(0,T;\Cga_D)$,
     \end{multicols}
     \end{center}

where $\Cga_{D}=\Cga(\bar{D(t)})\cup \Cga (\T^2\setminus D(t))$. Moreover,
$$\rho(t,x)=\rho^{in}(t,x) \mathbbm{1}_{D(t)}(x)+\rho^{out}(t,x) \mathbbm{1}_{D(t)^{c}}(x),\quad\mbox{where}
\quad    \rho(t,X(t,y))=\rho_0(y),$$  $D(t)=X(t,D_0)$ with $X$ the particle trajectories \eqref{traj} and $\partial D(t) \in C([0,T],C^{1+\ga})$.
\end{theorem}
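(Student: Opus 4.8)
We view \eqref{VJ} with $\mu=\tilde\mu(\rho)$ as a perturbation of the constant-viscosity system of Theorem~\ref{Th1}–Remark~\ref{remark1.2}, with the viscosity contrast as a small parameter. Set $a:=\mu-\bar\mu$; since $\mu$ is transported, $D_t a=0$, the bound $\|a/\bar\mu\|_{L^\infty}<\delta$ persists for all time, and $a$ is of patch type, piecewise $C^\ga$ with a jump across $\partial D(t)$. The momentum equation becomes
\begin{equation*}
\rho D_t u-\bar\mu\,\Delta u+\nabla P=\nabla\cdot(a\,\D u),\qquad \nabla\cdot u=0,
\end{equation*}
whose right-hand side is a lower-order term with small coefficient. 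A simplification over the $\R^2$ analysis of Theorem~\ref{Th1} is that on $\T^2$ the Poincaré-type inequalities (Lemma~\ref{Lemma3.7}, Lemma~\ref{LemDanchin2}) hold without restriction, so no spatial localization is needed and the degeneracy $\rho=0$ is handled as in the periodic case of Theorem~\ref{Th1}.

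\textbf{A priori estimates.} The weak energy inequality \eqref{energy_ineq} follows by testing with $u$: the viscous term yields $\int\mu|\D u|^2\ge\mu^m\|\nabla u\|_{L^2}^2$, still coercive, and the pressure drops by incompressibility. For the higher-order, time-weighted bounds one tests the momentum equation with $D_t u$. Relative to the constant-viscosity computation, the only new contributions come from $\int\nabla\cdot(a\,\D u)\cdot D_t u$; integrating by parts and using $D_t a=0$, these reduce to $\int a\,\D u:\nabla D_t u$ plus commutator terms of the type $\int a\,\D u:(\nabla u\,\nabla u)$, controlled by $\delta\|\nabla D_t u\|_{L^2}\|\nabla u\|_{L^2}$ and by lower-order quantities. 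Taking $\delta$ small absorbs $\delta\|\nabla D_t u\|_{L^2}^2$ into the parabolic gain $\bar\mu\|\nabla D_t u\|_{L^2}^2$. One then runs the bootstrap of Theorem~\ref{Th1} — propagating $t^{\frac{1-(\ga+s)}{2}}\srho\du\in L^2_tL^2$, $t^{\frac{2-(\ga+s)}{2}}\srho\du\in L^\infty_tL^2$, $t^{\frac{1-(\ga+s)}{2}}\nabla u\in L^\infty_tL^2$ and $t^{\frac{2-(\ga+s)}{2}}\nabla\du\in L^2_tL^2$ — via the interpolation between a Banach and a non-Banach space, together with the $L^p$ bounds for $u$ extracted from $\rho D_t u=\bar\mu\Delta u-\nabla P+\nabla\cdot(a\,\D u)$ and the Poincaré-type inequalities. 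Throughout, constants are tracked to depend only on $\delta$, $\mu^m$, $\mu^M$, $\rho^M$, the energy and $\|u_0\|_{\HgamT}$, with no lower bound on $\rho$.

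\textbf{Lipschitz and piecewise Hölder bounds; the interface.} For $\nabla u\in L^1_tL^\infty$ we use maximal $L^m$-regularity for the constant-viscosity Stokes operator applied to $\bar\mu\Delta u-\nabla P=\rho D_t u-\nabla\cdot(a\,\D u)$, which bounds $\|\nabla^2 u\|_{L^m}$ by $\|\rho D_t u\|_{L^m}$, a $\delta$-small multiple of $\|\nabla^2 u\|_{L^m}$ (absorbed), and norms of $a$; combined with the time-weighted $L^2$ control of $\srho\du$ and Sobolev embedding this gives $\nabla u\in L^1_tL^\infty$. The genuinely new point — and the main obstacle — is the $C^\ga$ statement, which must be read in the \emph{piecewise} sense $\Cga_D=\Cga(\bar{D(t)})\cup\Cga(\T^2\setminus D(t))$: since $\mu$ jumps across $\partial D(t)$, so do $\D u$ and $\nabla u$, and global $C^\ga$ regularity is false. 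Following the striated/conormal framework of \cite{Paicu20,Gancedo23}, we introduce a field tangent to $\partial D(t)$, transported by the flow, and control in $C^\ga$ the striated norms of $u$, of the coefficient $a$ and of the interface by a Grönwall argument closed with the $L^1_t$ Lipschitz bound on $\nabla u$; elliptic regularity for the Stokes system with a conormal, piecewise-$C^\ga$ coefficient — for which the smallness of $\delta$ makes the associated perturbation/fixed-point argument close — then yields $\nabla u\in L^1_t\Cga_D$. Once this holds, the trajectories \eqref{traj} satisfy $\partial_t\nabla X=(\nabla u)(X)\,\nabla X$, and a Grönwall estimate in the piecewise $C^\ga$ norm gives $\partial D(t)\in C([0,T];C^{1+\ga})$, which simultaneously closes the bootstrap on the geometry of $D(t)$.

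\textbf{Existence and uniqueness.} Existence of a solution with the stated regularity is obtained by a regularization scheme: mollify the data and lift $\rho_0$ away from zero so that the approximate problem is non-degenerate and solvable by the smooth variable-viscosity theory (building on \cite{Desjardins97,Paicu20,Gancedo23}); derive the above estimates uniformly in the regularization parameter — the time weights being precisely what makes them uniform up to $t=0$ for merely $\HgamT$ data — and pass to the limit. Uniqueness follows the Lagrangian duality argument used for Theorem~\ref{Th1}: in Lagrangian coordinates the free boundary is frozen, and the difference of two solutions is estimated by pairing against the solution of the associated backward linear (dual) problem, using the patch structure of $\rho$ to give the coefficients the required regularity; the variable-viscosity terms are again controlled by taking $\delta$ small.
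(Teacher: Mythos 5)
Your overall architecture (perturbation around $\bar\mu$, absorption of $\delta$-small terms, time weights and interpolation, piecewise-Hölder analysis for the $C^\gamma$ statement, regularization plus compactness for existence) matches the paper's, but there are two concrete gaps in the analytic core.

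First, your route to $\nabla u\in L^1_TL^\infty$ via maximal $L^m$-regularity for the Stokes operator cannot work here. You propose to bound $\|\nabla^2u\|_{L^m}$ by $\|\rho D_tu\|_{L^m}$ plus a $\delta$-small multiple of $\|\nabla^2u\|_{L^m}$ coming from $\nabla\cdot(a\,\D u)$. But $a=\mu-\bar\mu$ is discontinuous across $\partial D(t)$, so $\nabla\cdot(a\,\D u)=a\,\nabla\cdot\D u+\D u\cdot\nabla a$ contains a measure supported on the interface and is not in $L^m$; more fundamentally, the stress balance forces $\nabla u$ itself to jump across $\partial D(t)$ when $\mu$ jumps, so $\nabla^2u\notin L^m$ for any $m>2$ (otherwise $\nabla u$ would be continuous). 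This is exactly why the quantity Lemma~\ref{Lemma3.8} controls in the constant-viscosity case is unavailable here. The paper instead keeps everything at first order, writing $\nabla u=\frac{1}{\bar\mu}\nabla\Delta^{-1}\P(\rho D_tu)-\nabla\Delta^{-1}\P\nabla\cdot\bigl[(\tfrac{\mu}{\bar\mu}-1)\D u\bigr]$; the first term is handled by $L^p$ bounds and Lemma~\ref{LemDanchin2}, while the second is a zeroth-order singular integral acting on a discontinuous function — unbounded on $L^\infty$ — and must be treated by the inside/outside decomposition relative to $D(t)$, the evenness of the kernels, and the propagated piecewise $C^\gamma$ regularity. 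So the $L^\infty$ bound and the $\Cga_D$ bound are obtained together; you cannot get the former by elliptic $W^{2,m}$ theory and only then turn to the striated analysis.

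Second, you omit the decay estimates (the paper's Step 4, Lemma~\ref{ThDecay}), and these are not optional. The piecewise-Hölder bootstrap for the singular term is circular: it requires $\|\nabla u\|_{L^1_TL^\infty}$, which it is supposed to produce, and the circle is broken by the smallness of $\delta$ only if the competing constant is uniform in $T$. The time-weighted estimates alone give constants growing with $T$, so without the exponential decay of $\|\srho u\|$, $\|\smu\D u\|$ and $\|\srho D_tu\|$ for $t\ge1$ (obtained from the Poincaré inequality \eqref{Poincaré} on $\T^2$), your $\delta$ would have to shrink as $T\to\infty$ and the result would not be global. Finally, a smaller point: the paper's uniqueness proof for this theorem is a direct Lagrangian energy argument (splitting $\delta v=z+w$ and using Lemma~\ref{Control_w} for the non-solenoidal part), not the Eulerian duality argument of Theorem~\ref{Th1}; your description conflates the two, and the duality route would face the same difficulty with the discontinuous viscosity in the dual problem that motivates the Lagrangian formulation in the first place.
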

The proof of this theorem is detailed in Section \ref{sect:viscosity contrast}. In the periodic setting, the availability of a Poincaré inequality and a Gagliardo–Nirenberg-type inequality (see Lemma \ref{LemDanchin2}) is a key ingredient in the analysis. While the strategy retains some elements from the constant-viscosity case, such as the use of time weights and interpolation, the presence of a non-constant, density-dependent viscosity introduces new challenges.
A central difficulty lies in the explicit treatment of the pressure and the fact that the velocity gradient is given implicitly by higher-order Riesz transforms acting on functions that are discontinuous across the moving interface. This makes it considerably more delicate to obtain Lipschitz bounds. Moreover, understanding the decay of the velocity is essential to exploit the smallness of the viscosity jump to close the estimates.
To handle the singular structure near the interface, we propagate regularity within each phase separately. Finally, uniqueness is established by switching to Lagrangian coordinates.

\subsection{Outline of the paper}
Section \ref{sect:Preliminaries} gathers some preliminaries, including notation, the definition of functional spaces, and auxiliary lemmas required for the following sections. Section \ref{sect:INS constant vis.} is devoted to the proof of Theorem \ref{Th1}, which is divided into existence and uniqueness. The existence part is organized into six steps. Theorem \ref{th2} is proved in Section \ref{sect:viscosity contrast}, also divided into existence and uniqueness, with the existence part structured in five steps.

\section{Preliminaries}\label{sect:Preliminaries}
This section is devoted to introducing the analytical framework and technical tools required for the proofs of the main results. We begin by defining the functional spaces involved in the analysis and fixing the notation. We then establish several interpolation results, specifically adapted to the setting considered here. Finally, we present a collection of auxiliary lemmas that will be used throughout the proofs of the main theorems.

\medskip
We denote the Lebesgue spaces by $L^p(\Omega)$, with $\Omega=\R^2$ or $\Omega=\T^2$, or simply $L^p$ when there is no risk of confusion. Given a normed functional  space $X$, such as a Sobolev or Hölder space, we denote the corresponding norm by $\norm{\c}_{X}$, except when $X=L^2$, in which case the subscript is omitted. For simplicity, the Bochner space $L^p(0,T; X)$ will be denoted by $L^p_T X$.

\medskip
We recall the definition of homogeneous and inhomogeneous Sobolev and Hölder spaces (see Chapter 1 in \cite{BCD}). 
    \begin{definition}
        Let $s\in\R$. The inhomogeneous Sobolev space $H^s(\R^2)$  consists of tempered distributions $u$ such that $\hat{u}\in L^2_{\mathrm{loc}}(\R^2)$ and 
        \begin{equation*}
            \norm{u}_{H^s}^2=\int_{\R^2} (1+\abs{\xi}^2)^s\abs{\hat{u}(\xi)}^2 d\xi < \infty.
        \end{equation*}
    \end{definition}
Analogously in the unit torus, $H^s(\T^2)$ is defined via the norm
\begin{equation*}
	\norm{u}_{H^s}^2=\sum_{k\in\Z^2}(1+\abs{k}^{2})^s\abs{\hat{u}_k}^2.
\end{equation*}
 \begin{definition}
        Let $s>0$. The homogeneous Sobolev space $\dot{H}^s(\R^2)$  consists of tempered distributions $u$ such that  $\hat{u}\in L^1_{\mathrm{loc}}(\R^2)$ and 
        \begin{equation*}
            \norm{u}_{\dot{H}^s}^2=\int_{\R^2} \abs{\xi}^{2s}\abs{\hat{u}(\xi)}^2 d\xi < \infty.
        \end{equation*}
 \end{definition}

 \begin{definition}
    Let $\Omega \subset \R^2$ and $\gamma\in (0,1)$. The inhomogeneous Hölder space $C^{1+\gamma}(\Omega)$ is the space of functions $u\in C^1(\Omega)$ such that 
    \begin{equation*}
        \norm{u}_{C^{1+\gamma}(\Omega)}=\norm{ u}_{L^\infty(\Omega)}+\norm{\nabla u}_{L^\infty(\Omega)}+\norm{\nabla u}_{\dot{C}^{\gamma}(\Omega)}< \infty,\quad\end{equation*}   
    where
    $$\norm{f}_{\dot{C}^{\gamma}(\Omega)}=\sup_{\substack{ x,y \in \Omega\\ x \neq y}}\frac{\abs{ f(x)- f(y)}}{\abs{x-y}^\gamma},$$
    denotes the Hölder seminorm.
 \end{definition}

We emphasize that the spaces $L^p$ and $H^s$ are Banach spaces for all $s\in\R$ and $p \in [1, \infty]$. In contrast, homogeneous Sobolev spaces are not always complete (see Proposition 1.34 and Remark 2.26 in \cite{BCD}). Most interpolation results in the literature apply to Banach spaces (see \cite{Bergh76, BCD}). However, we are interested in interpolating with the space $\dot{H}^1(\R^2)$, which is not complete. To deal with this, we rely on real interpolation, which can be formulated for general normed spaces and is denoted by $ (A, B)_{\theta, p}$.

\begin{proposition}(Theorem 2.6 in \cite{Gaudin24})\label{thfrances}
    Let $s_0,s_1 \in \R$ such that $s_0 \neq s_1$ and for $\th \in (0,1)$, let
    $$
    s=(1-\th)s_0+\th s_1.
    $$
    Assuming $s_0<1$, we get the following 
    $$
(\dot{H}^{s_0}(\R^2),\dot{H}^{s_1}(\R^2))_{\th,2}=\dot{H}^s(\R^2),
    $$
\end{proposition}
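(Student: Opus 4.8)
The plan is to pass to the Fourier transform, where each $\dot H^\sigma(\R^2)$ becomes, by definition, a weighted $L^2$ space. Since $\FF\colon \dot H^\sigma(\R^2)\to L^2\parent{\R^2,\abs{\xi}^{2\sigma}d\xi}$ is an isometry onto its image and real interpolation is stable under isometric isomorphisms, one replaces $\dot H^{s_j}$ by $E^{s_j}:=\{g\in L^2_{\mathrm{loc}}(\R^2):\,g=\hat f\text{ for some }f\in\SS',\ \norm{g}_{E^{s_j}}^2:=\int\abs{\xi}^{2s_j}\abs{g}^2 d\xi<\infty\}$. After possibly exchanging the two endpoints (using $(\dot H^{s_0},\dot H^{s_1})_{\th,2}=(\dot H^{s_1},\dot H^{s_0})_{1-\th,2}$, which preserves the hypothesis $s_0<1$), one may assume $s_0<s_1$. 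The assumption $s_0<1=n/2$ is exactly what makes $E^{s_0}=L^2(\abs{\xi}^{2s_0}d\xi)$: any such $g$ is integrable near the origin, hence a genuine tempered distribution, so $\dot H^{s_0}$ is a \emph{Banach} space; the other endpoint need not be complete, which is the non-standard feature of the statement.

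\textbf{Sharp two-sided bound for the $K$-functional.} Next, I would prove that for $f\in\dot H^{s_0}+\dot H^{s_1}$,
$$
K\bigl(t,f;\dot H^{s_0},\dot H^{s_1}\bigr)^2\;\simeq\;\int_{\R^2}\min\bigl(\abs{\xi}^{2s_0},\,t^2\abs{\xi}^{2s_1}\bigr)\,\abs{\hat f(\xi)}^2\,d\xi,
$$
with constants depending only on $s_0,s_1$. The $\les$ bound uses the explicit splitting $\hat f=\hat f\,\mathbbm 1_{\{\abs{\xi}\ge\la(t)\}}+\hat f\,\mathbbm 1_{\{\abs{\xi}<\la(t)\}}$ with $\la(t):=t^{-1/(s_1-s_0)}$, since in each frequency region one of the two weights realizes the minimum; the $\gtrsim$ bound holds because for \emph{any} admissible decomposition $\hat f=a+b$ one has $\abs{\hat f}^2\le 2(\abs a^2+\abs b^2)$ pointwise, together with $\min(\abs{\xi}^{2s_0},t^2\abs{\xi}^{2s_1})\le\abs{\xi}^{2s_0}$ and $\le t^2\abs{\xi}^{2s_1}$.

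\textbf{Integration in $t$ and norm identification.} Inserting this into $\norm{f}_{(\dot H^{s_0},\dot H^{s_1})_{\th,2}}^2=\int_0^\infty t^{-2\th}K(t,f)^2\,\tfrac{dt}{t}$ and applying Tonelli's theorem to exchange the order of integration, the inner integral $\int_0^\infty t^{-2\th-1}\min(\abs{\xi}^{2s_0},t^2\abs{\xi}^{2s_1})\,dt$ is elementary: splitting at $t=\abs{\xi}^{-(s_1-s_0)}$ and using $0<\th<1$ for convergence at both endpoints yields precisely $C(\th,s_0,s_1)\,\abs{\xi}^{2s}$ with $s=(1-\th)s_0+\th s_1$. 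Hence $\norm{f}_{(\dot H^{s_0},\dot H^{s_1})_{\th,2}}^2\simeq\int\abs{\xi}^{2s}\abs{\hat f}^2 d\xi=\norm{f}_{\dot H^{s}}^2$. To turn this norm equivalence into an equality of the two \emph{spaces}, one notes that it already holds on the dense subspace of Schwartz functions with Fourier support away from the origin, which lies in every space involved; for the remaining inclusion, given $f\in\dot H^s$ one writes $\hat f=\hat f\,\mathbbm 1_{\{\abs\xi\ge1\}}+\hat f\,\mathbbm 1_{\{\abs\xi<1\}}$ and, since $s_0<s<s_1$, checks that the high-frequency piece lies in $\dot H^{s_0}$ and the low-frequency piece in $\dot H^{s_1}$ (each being square-integrable against the corresponding weight after the cutoff, and tempered), so $f\in\dot H^{s_0}+\dot H^{s_1}$ and the $K$-functional estimate applies to $f$ itself.

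\textbf{Main obstacle.} The genuinely delicate point is not the $K$-functional computation, which is routine once set up, but the functional-analytic bookkeeping forced by the possible incompleteness of $\dot H^{s_1}$ when $s_1\ge 1$: one must ensure that the $K$-functional is taken over decompositions inside the common ambient space $\SS'$, that the ``optimal'' frequency-cutoff splittings actually produce tempered distributions belonging to the endpoint spaces, and that the resulting identity survives the passage from a dense subspace to the full space. This is exactly where the hypothesis $s_0<1$ is used, guaranteeing that at least one endpoint is a bona fide Banach space of tempered distributions. An alternative but essentially equivalent route would replace the first three steps by realizing $\dot H^\sigma$ as a retract of the weighted sequence space $\ell^2(2^{j\sigma};L^2)$ through a Littlewood--Paley decomposition and invoking the classical interpolation of weighted $\ell^2(L^2)$ spaces; the same convergence-in-$\SS'$ subtleties then reappear in the reconstruction operator.
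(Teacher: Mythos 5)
The paper does not prove this proposition: it is imported verbatim as Theorem~2.6 of the cited reference \cite{Gaudin24}, so there is no in-paper argument to compare against. Judged on its own, your proof is correct and follows the standard route one would expect (and essentially the route of the reference): identify $\dot H^{\sigma}$ with a weighted $L^2$ space on the Fourier side, compute the $K$-functional of the couple of weighted $L^2$ spaces as $K(t,f)^2\simeq\int\min(\abs{\xi}^{2s_0},t^2\abs{\xi}^{2s_1})\abs{\hat f}^2\,d\xi$, and integrate in $t$; your exponent bookkeeping at the crossover $t=\abs{\xi}^{s_0-s_1}$ is right and does produce $\abs{\xi}^{2s}$. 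You also correctly locate the only real content of the hypothesis $s_0<1=n/2$: it makes $\abs{\xi}^{-2s_0}$ integrable near the origin, hence $\dot H^{s_0}$ complete and equal to the full weighted $L^2$ space, and it guarantees that low-frequency truncations are genuine tempered distributions. Two small points worth tightening. First, in the upper bound for the $K$-functional you should say explicitly why the cutoff decomposition is admissible: the low-frequency piece $\hat f\,\mathbbm 1_{\{\abs{\xi}<\la\}}$ is compactly supported and lies in $L^1$ because $\hat f\in L^1_{\mathrm{loc}}$ is built into the definition of both endpoint spaces (so it defines a bounded continuous function), and the high-frequency piece is then $f$ minus that, hence tempered; without this line the infimum defining $K$ might a priori be taken over an empty set of decompositions. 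Second, the appeal to density of Schwartz functions with Fourier support away from the origin is both unnecessary and delicate when $s_1\ge 1$ (where such density statements are exactly what incompleteness obstructs); your subsequent direct two-inclusion argument via the splitting at $\abs{\xi}=1$ is complete on its own and should simply replace the density remark.
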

In particular, we will use the following interpolation in $\R^2$ for $s_0=0$ and $s_1=1$,
$$
(\dot{H}^{0}(\R^2),\dot{H}^{1}(\R^2))_{\th,2}=\dot{H}^{s}(\R^2),
$$
where by definition $\dot{H}^{0}(\R^2)=\Lp{2}$ and $s\in(0,1)$.
An analogous interpolation result holds for inhomogeneous spaces on $\T^2$ as well.

\medskip
Moreover, we would like to consider interpolation of $L^p$-spaces with change of measure, that is, we would like to study $\parent{L^p(dm_0),L^p(dm_1)}_{\th,q}$ where $m_0, m_1$ are two positive measures. We may assume that $m_0$ and $m_1$ are absolutely continuous with respect to a third measure $m$. Then, by Radon-Nikodym theorem there are two functions, $\om_0, \om_1$, such that
$$
\begin{array}{lr}
     dm_0(x)=\om_0(x) dm(x), &
     dm_1(x) = \om_1(x) dm(x). 
\end{array}
$$
We will denote by $L^p(\om_i)$ the space $L^p(\om_i d m)$ for $i=0,1$.

\begin{proposition}\label{Stein-Weiss} (Interpolation theorem of Stein-Weiss. Theorem 5.4.1 in \cite{Bergh76}). Assume that $p\in(0,\infty]$ and that $\th\in(0,1)$. Considering the function $\omega$ defined by
$$
\om(x)=\om_0^{1-\th}(x)\om_1^\th(x).
$$
Then
$$
\parent{L^p(\om_0),L^p(\om_1)}_{\th,p}=L^p(\om).
$$    
\end{proposition}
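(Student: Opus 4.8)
The final statement to prove is Proposition~\ref{Stein-Weiss}, the Stein--Weiss interpolation theorem for $L^p$ spaces with change of measure. Since this is a classical result (attributed to Theorem 5.4.1 in \cite{Bergh76}), the plan is to give the standard proof via the $K$-functional characterization of real interpolation, adapted to the weighted $L^p$ setting.

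\textbf{Setup and strategy.} Recall that for a compatible couple $(A_0, A_1)$ of normed spaces, the real interpolation space $(A_0,A_1)_{\th,p}$ is defined through Peetre's $K$-functional
\[
K(t,a) = \inf_{a = a_0 + a_1} \parent{\norm{a_0}_{A_0} + t\norm{a_1}_{A_1}},
\]
with norm $\norm{a}_{(A_0,A_1)_{\th,p}} = \parent{\int_0^\infty \parent{t^{-\th}K(t,a)}^p \frac{dt}{t}}^{1/p}$. The plan is: first, reduce to the case of a single underlying measure and two weights $\om_0, \om_1$; second, compute the $K$-functional $K(t, f; L^p(\om_0), L^p(\om_1))$ \emph{pointwise} — the key feature of the $L^p$ setting is that the optimal decomposition can be made at each point $x$ independently; third, plug the resulting expression into the integral defining the real interpolation norm and evaluate the $t$-integral explicitly via a change of variables, producing exactly $\norm{f}_{L^p(\om)}$ with $\om = \om_0^{1-\th}\om_1^\th$.

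\textbf{Key steps.} The central computation is the $K$-functional. For $f \in L^p(\om_0) + L^p(\om_1)$ and $t>0$, one writes $f = f_0 + f_1$ and, because the $L^p$ norm is "local," the infimum over decompositions can be taken pointwise: for scalars one has, up to a constant depending only on $p$,
\[
K(t,f) \sim \parent{\int \abs{f(x)}^p \min\parent{\om_0(x), t^p \om_1(x)} d\mu(x)}^{1/p}.
\]
(For $p=\infty$ the min is replaced by the obvious expression; for $1\le p<\infty$ this is a standard pointwise minimization of $\abs{s}^p\om_0 + t^p\abs{f-s}^p\om_1$ over $s$, whose value is comparable to $\abs{f}^p\min(\om_0, t^p\om_1)$.) Then
\[
\norm{f}_{(L^p(\om_0),L^p(\om_1))_{\th,p}}^p \sim \int_0^\infty t^{-\th p}\int \abs{f(x)}^p \min\parent{\om_0(x),t^p\om_1(x)}\, d\mu(x)\, \frac{dt}{t}.
\]
Apply Tonelli to swap the integrals, and for fixed $x$ substitute $t = (\om_0(x)/\om_1(x))^{1/p} \tau$ (handling the degenerate cases $\om_0(x)=0$ or $\om_1(x)=0$ separately, where the inner integral either vanishes or forces $f(x)=0$ up to a harmless set). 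The $\tau$-integral $\int_0^\infty \tau^{-\th p}\min(1,\tau^p)\frac{d\tau}{\tau}$ converges to a finite constant $c_{\th,p}$ precisely because $0<\th<1$, and the Jacobian together with the $\min$ produces the factor $\om_0(x)^{1-\th}\om_1(x)^\th = \om(x)$. This yields $\norm{f}_{(L^p(\om_0),L^p(\om_1))_{\th,p}} \sim \norm{f}_{L^p(\om)}$, which is the claimed identification of spaces (with equivalent norms).

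\textbf{Main obstacle.} The only genuinely delicate point is the pointwise evaluation of the $K$-functional, i.e. justifying that the infimum over \emph{global} decompositions $f = f_0 + f_1$ is achieved (up to constants) by a \emph{pointwise} choice $f_0(x) = \chi(x) f(x)$. This is where measurability of the optimal splitting function and the exchange of infimum with integration must be argued carefully; it relies on the fact that the functional $s \mapsto \abs{s}^p\om_0(x) + t^p\abs{f(x)-s}^p\om_1(x)$ is convex with a measurably-varying minimizer, so one can select $f_0$ measurably and the pointwise-optimal value integrates to something comparable to $K(t,f)^p$. Once this lemma is in hand, the rest is the elementary Fubini-plus-substitution computation sketched above. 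Since \cite{Bergh76} already contains the full argument, in the paper it suffices to cite it and, if desired, recall the $K$-functional formula as the operative fact used in later sections.
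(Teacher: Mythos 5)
The paper does not prove this proposition; it simply cites Theorem 5.4.1 of \cite{Bergh76}, and your sketch reproduces the standard argument from that reference: the pointwise evaluation of the $K$-functional for the couple $\parent{L^p(\om_0),L^p(\om_1)}$, followed by Tonelli and the substitution $t=(\om_0(x)/\om_1(x))^{1/p}\tau$, whose convergence for $\th\in(0,1)$ produces the weight $\om_0^{1-\th}\om_1^{\th}$. The key steps (measurable selection of the pointwise-optimal splitting, treatment of the degenerate weight values, and the $p=\infty$ modification where the integral in $t$ becomes a supremum) are all correctly identified, so the proposal is sound and consistent with the paper's reliance on the cited result.
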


\subsection{Auxiliary lemmas for Theorem~\ref{Th1}}

We first introduce some lemmas satisfied by the  weak solutions of \eqref{VJ} in the whole space with $\mu=1$. The first three lemmas are necessary to obtain a bound of the $L^2$ norm of a function $f$ in the whole space by taking advantage of the control of $\rho f$ and its derivative. The proof of these lemmas can be found in \cite{Hao24a}. 

A first remark is the fact that $\rho_0\not\equiv 0$ implies the existence of $x_0\in \R^2$ and $R_0\in(0,\infty)$ such that 
\begin{equation}\label{lowrho}
  \int_{B(x_0,R_0)}\rho_0(x)dx\geq c_0 >0.  
\end{equation}
We would like to take advantage of the propagation in time of this property. 
\begin{lemma}(Lemma 2.2 in \cite{Hao24a})\label{lemma3.5}
    Assume \eqref{lowrho} and $R\geq R_0+2t\norm{\sr u_0}/\sqrt{c_0}$. Then we have 
    $$
    \int_{B(x_0,R)}\rho(t,x)dx\geq c_0/4. 
    $$
\end{lemma}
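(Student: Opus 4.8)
The statement to prove is Lemma~\ref{lemma3.5}: assuming $\int_{B(x_0,R_0)}\rho_0\,dx \geq c_0 > 0$ and $R \geq R_0 + 2t\norm{\sr u_0}/\sqrt{c_0}$, one has $\int_{B(x_0,R)}\rho(t,x)\,dx \geq c_0/4$.

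\textbf{Approach.} The plan is to use the transport structure of the density, $\rho(t,X(t,y)) = \rho_0(y)$, together with the incompressibility of the flow, to track the mass initially contained in $B(x_0,R_0)$ and show that it cannot escape a ball of the prescribed radius by time $t$. Since the flow map $X(t,\cdot)$ is measure-preserving (because $\div u = 0$), we have $\int_{X(t,B(x_0,R_0))}\rho(t,x)\,dx = \int_{B(x_0,R_0)}\rho_0(y)\,dy \geq c_0$. Thus it suffices to show that $X(t,B(x_0,R_0)) \subset B(x_0,R)$ up to a set carrying density-mass at most $3c_0/4$; in fact I will aim for the cleaner goal of controlling how far particles starting in $B(x_0,R_0)$ can travel, weighted against the mass they carry.

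\textbf{Key steps.} First, I would estimate the displacement $|X(t,y) - y|$ along a trajectory: from \eqref{traj}, $|X(t,y) - y| \leq \int_0^t |u(\tau, X(\tau,y))|\,d\tau$. Integrating this inequality against the density and using a Cauchy--Schwarz in space at each fixed time, together with the change of variables $x = X(\tau,y)$ (measure-preserving) and $\rho(\tau,X(\tau,y)) = \rho_0(y)$, gives
\begin{equation*}
\int_{B(x_0,R_0)} |X(t,y)-y|\,\rho_0(y)\,dy \leq \int_0^t \int |u(\tau,x)|\,\rho(\tau,x)\,dx\,d\tau \leq \Big(\int \rho_0\,dy\Big)^{1/2}\!\!\int_0^t \norm{\sqrt{\rho}\,u(\tau)}\,d\tau,
\end{equation*}
and then the weak energy inequality \eqref{energy_ineq} bounds $\norm{\sqrt{\rho}\,u(\tau)} \leq \norm{\sr u_0}$. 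This yields a bound on the density-weighted average displacement of order $t\norm{\sr u_0}\big(\int\rho_0\big)^{1/2}$. Second, I would apply Chebyshev/Markov: the set $E_t = \{y \in B(x_0,R_0) : |X(t,y)-y| > \ell\}$ satisfies $\int_{E_t}\rho_0\,dy \leq \ell^{-1}\int_{B(x_0,R_0)}|X(t,y)-y|\rho_0\,dy$. Choosing the threshold $\ell$ so that this is $\leq 3c_0/4$ forces $\ell \gtrsim t\norm{\sr u_0}/\sqrt{c_0}$ after crudely bounding $\big(\int_{B(x_0,R_0)}\rho_0\big)^{1/2}$ — here I need to be slightly careful since a priori $\int_{B(x_0,R_0)}\rho_0$ could exceed $c_0$; I would instead restrict attention to a subset carrying mass exactly comparable to $c_0$, or note the argument only needs the mass lower bound. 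The points of $B(x_0,R_0)\setminus E_t$ then have $X(t,y) \in B(x_0, R_0 + \ell) \subset B(x_0,R)$, and they carry $\rho_0$-mass at least $c_0 - 3c_0/4 = c_0/4$; measure-preservation and $\rho(t,X(t,y))=\rho_0(y)$ transfer this to $\int_{B(x_0,R)}\rho(t,x)\,dx \geq c_0/4$.

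\textbf{Main obstacle.} The delicate point is making the Chebyshev step quantitatively sharp enough to land the displacement threshold at exactly $2t\norm{\sr u_0}/\sqrt{c_0}$ (so that $R = R_0 + 2t\norm{\sr u_0}/\sqrt{c_0}$ works with the stated constant $c_0/4$), which requires handling the factor $\big(\int_{B(x_0,R_0)}\rho_0\big)^{1/2}$ correctly rather than bounding it by $(\rho^M)^{1/2}|B(x_0,R_0)|^{1/2}$. The clean fix is to select a measurable subset $A \subset B(x_0,R_0)$ with $\int_A \rho_0 = c_0$ exactly (possible since the density integral is a continuous function of the domain after, e.g., intersecting with sub-balls or using a layer-cake truncation), run the displacement and Chebyshev estimates with $A$ in place of $B(x_0,R_0)$, so that $\big(\int_A\rho_0\big)^{1/2} = \sqrt{c_0}$ and the threshold $\ell = 2t\norm{\sr u_0}/\sqrt{c_0}$ makes $\int_{\{y\in A:\,|X(t,y)-y|>\ell\}}\rho_0 \leq \tfrac12 c_0$, leaving mass $\geq c_0/2 \geq c_0/4$ inside $B(x_0,R)$. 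A secondary technical point — regularity of trajectories, i.e.\ that $X(t,\cdot)$ is well-defined and measure-preserving — is guaranteed at the level of the approximate/regularized solutions used to construct the weak solution (where $\nabla u \in L^1_T L^\infty$ in fact holds by Theorem~\ref{Th1}), and passes to the limit; I would invoke this rather than reprove it.
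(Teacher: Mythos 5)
The paper does not prove this lemma itself; it is quoted verbatim from \cite{Hao24a} (Lemma 2.2 there), where the argument is the standard Eulerian one: take a cutoff $\phi$ with $\phi\equiv 1$ on $B(x_0,R_0)$, $\mathrm{supp}\,\phi\subset B(x_0,R)$, $|\nabla\phi|\le (R-R_0)^{-1}$, and differentiate $g(t)=\bigl(\int\rho\phi^2\,dx\bigr)^{1/2}$ using the transport equation, which gives $g'\ge -\|\nabla\phi\|_{L^\infty}\|\sqrt{\rho}u\|\ge -\|\sqrt{\rho_0}u_0\|/(R-R_0)$ and hence $g(t)\ge \sqrt{c_0}-\sqrt{c_0}/2$, i.e.\ exactly the constant $c_0/4$. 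Your Lagrangian route (push the mass forward along trajectories, bound the density-weighted displacement by Cauchy--Schwarz and the energy inequality, then Chebyshev) is genuinely different, and it is correct: the key repair you identify — replacing $B(x_0,R_0)$ by a sub-ball $A$ with $\int_A\rho_0=c_0$ exactly, which exists by continuity of $r\mapsto\int_{B(x_0,r)}\rho_0$ — is precisely what makes the threshold $\ell=2t\|\sqrt{\rho_0}u_0\|/\sqrt{c_0}$ land, and in fact yields the better constant $c_0/2$. Two caveats. First, your intermediate display bounds by $\int_{\mathbb{R}^2}|u|\rho\,dx$ and $(\int\rho_0)^{1/2}$ over all space, which may be infinite under hypothesis \eqref{H2}; the change of variables keeps the spatial integral over the bounded set $X(\tau,A)$ of $\rho$-mass exactly $c_0$, and you should state it that way throughout. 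Second, your argument presupposes a well-defined, measure-preserving flow map, which is legitimate here only because the lemma is invoked in Step 6 for the smooth, non-vacuum approximations $(\rho^\ep,u^\ep)$; for a general weak solution with merely bounded nonnegative density the Eulerian cutoff proof is the safer one, which is presumably why \cite{Hao24a} argues that way. The trade-off, then: your proof is more geometric and gives a sharper constant, while the cited proof avoids trajectories entirely and applies directly at the level of weak solutions.
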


Next lemma shows local control of $f$ in terms of $\sqrt{\rho}f$ at the expense of higher regularity.

\begin{lemma}(Lemma A.2 in \cite{Hao24a})\label{Lemma3.6}
    Let $\varrho \in \Lp{\infty}$ and $\int_{B(x_0,r_0)}\varrho(x)dx\geq c_0 >0$. Then for all $p\in[1,\infty)$, there exist a constant $C>0$ depending on $p, \norm{\varrho}_{L^\infty}$ and $c_0$ such that for $f \in H^1_{\mathrm{loc}}(\R^2)$,
    $$
    \norm{f}_{L^p(B(x_0,r_0))}\leq C r_0^{2/p}\parent{(1+r_0)\norm{\nabla f}_{L^{2}(B(x_0,r_0))}+\norm{\sqrt{\varrho}f}_{L^2(B(x_0,r_0))}}.
    $$
\end{lemma}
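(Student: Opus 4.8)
The natural strategy is a Poincaré–Sobolev argument on the ball $B:=B(x_0,r_0)$, combined with a control of the average of $f$ extracted from the mass lower bound $\int_B\varrho\,dx\ge c_0$. Denote by $\langle f\rangle_B:=\frac1{|B|}\int_B f\,dx$ the mean value of $f$ on $B$; this is well defined since $f\in H^1_{\mathrm{loc}}(\R^2)$ restricts to an element of $H^1(B)$, and in two dimensions $H^1(B)\hookrightarrow L^p(B)$ for every $p\in[1,\infty)$. I would first record the Poincaré–Sobolev inequality in the scale-invariant form
\begin{equation*}
\norm{f-\langle f\rangle_B}_{L^p(B)}\le C_p\,r_0^{2/p}\norm{\nabla f}_{L^2(B)},\qquad 1\le p<\infty,
\end{equation*}
which follows from the corresponding inequality on the unit ball together with the dilation $x\mapsto x_0+r_0 x$, under which the two-dimensional Dirichlet energy $\int|\nabla f|^2$ is invariant while $\norm{\cdot}_{L^p(B(x_0,r_0))}$ scales like $r_0^{2/p}$.

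Next I would estimate the mean value. Using $\int_B\varrho\,dx\ge c_0$ together with the splitting $f=(f-\langle f\rangle_B)+\langle f\rangle_B$,
\begin{equation*}
c_0\,\abs{\langle f\rangle_B}\le\Big|\int_B\varrho\,\langle f\rangle_B\,dx\Big|\le\Big|\int_B\varrho f\,dx\Big|+\Big|\int_B\varrho\,(f-\langle f\rangle_B)\,dx\Big|.
\end{equation*}
By Cauchy–Schwarz the first term on the right is at most $\big(\int_B\varrho\big)^{1/2}\norm{\sqrt{\varrho}f}_{L^2(B)}\le\big(\norm{\varrho}_{L^\infty}|B|\big)^{1/2}\norm{\sqrt{\varrho}f}_{L^2(B)}$, while the second is at most $\norm{\varrho}_{L^\infty}|B|^{1/2}\norm{f-\langle f\rangle_B}_{L^2(B)}$, which by the $p=2$ case above is controlled by $C\norm{\varrho}_{L^\infty}|B|^{1/2}r_0\norm{\nabla f}_{L^2(B)}$. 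Inserting $|B|=\pi r_0^2$ yields a bound for $\abs{\langle f\rangle_B}$ by $r_0$- and $r_0^2$-multiples of $\norm{\nabla f}_{L^2(B)}$ and an $r_0$-multiple of $\norm{\sqrt{\varrho}f}_{L^2(B)}$, with constant depending only on $c_0$ and $\norm{\varrho}_{L^\infty}$. Then from $\norm{f}_{L^p(B)}\le\norm{f-\langle f\rangle_B}_{L^p(B)}+\abs{\langle f\rangle_B}\,|B|^{1/p}$ and $|B|^{1/p}=\pi^{1/p}r_0^{2/p}$, collecting the powers of $r_0$ produces the asserted inequality.

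The only real subtlety is bookkeeping rather than an analytic obstacle: one must keep every constant dependent only on $p$, $\norm{\varrho}_{L^\infty}$ and $c_0$, which is why I rely on the explicit Poincaré–Sobolev inequality with tracked $r_0$-scaling rather than a soft contradiction/compactness argument on the unit ball — the latter would be shorter but would yield a non-explicit constant and would not display the dependence on $r_0$. One also checks at the outset that all the integrals above are finite: $\varrho f^2\in L^1(B)$ because $\varrho\in L^\infty$ and $f\in L^2(B)$, and $f\in L^p(B)$ by the two-dimensional Sobolev embedding, so the manipulations are justified.
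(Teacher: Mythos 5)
Your strategy (scaled Poincar\'e--Sobolev on $B=B(x_0,r_0)$ plus control of the mean via the mass lower bound) is the natural one; note the paper does not reprove this lemma but quotes it from \cite{Hao24a}, so I can only compare against the standard argument, which yours essentially is. However, the bookkeeping you dismiss as the "only real subtlety" is exactly where the argument fails: your final sentence "collecting the powers of $r_0$ produces the asserted inequality" is not correct. From your estimates you get $c_0\abs{\langle f\rangle_B}\le\parent{\norm{\varrho}_{L^\infty}|B|}^{1/2}\norm{\sqrt{\varrho}f}_{L^2(B)}+C\norm{\varrho}_{L^\infty}|B|^{1/2}r_0\norm{\nabla f}_{L^2(B)}$, hence with $|B|^{1/2}\sim r_0$ the bound $\abs{\langle f\rangle_B}\le C\parent{r_0\norm{\sqrt{\varrho}f}_{L^2(B)}+r_0^2\norm{\nabla f}_{L^2(B)}}$. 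Multiplying by $|B|^{1/p}\sim r_0^{2/p}$ and adding the oscillation term yields $\norm{f}_{L^p(B)}\le Cr_0^{2/p}\parent{(1+r_0^2)\norm{\nabla f}_{L^2(B)}+r_0\norm{\sqrt{\varrho}f}_{L^2(B)}}$, which is strictly weaker than the stated inequality for $r_0>1$: the lemma asserts $(1+r_0)$ on the gradient term and, crucially, coefficient $1$ (no $r_0$) on $\norm{\sqrt{\varrho}f}_{L^2(B)}$. The spurious powers come from using $\int_B\varrho\ge c_0$ on the left while over-estimating $\int_B\varrho\le\norm{\varrho}_{L^\infty}|B|$ on the right.

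The repair is to estimate the mean so that $\int_B\varrho$ cancels instead of being bounded above by $|B|$. For instance,
\begin{equation*}
c_0\,\langle f\rangle_B^2\le\int_B\varrho\,\langle f\rangle_B^2\,dx\le 2\int_B\varrho f^2\,dx+2\int_B\varrho\,\bigl(f-\langle f\rangle_B\bigr)^2\,dx\le 2\norm{\sqrt{\varrho}f}_{L^2(B)}^2+C\norm{\varrho}_{L^\infty}\,r_0^2\,\norm{\nabla f}_{L^2(B)}^2,
\end{equation*}
where the last step is your $p=2$ Poincar\'e inequality. This gives $\abs{\langle f\rangle_B}\le C\parent{\norm{\sqrt{\varrho}f}_{L^2(B)}+r_0\norm{\nabla f}_{L^2(B)}}$ with $C=C(c_0,\norm{\varrho}_{L^\infty})$, and then $\norm{f}_{L^p(B)}\le\norm{f-\langle f\rangle_B}_{L^p(B)}+\abs{\langle f\rangle_B}\,|B|^{1/p}$ produces exactly $Cr_0^{2/p}\parent{(1+r_0)\norm{\nabla f}_{L^2(B)}+\norm{\sqrt{\varrho}f}_{L^2(B)}}$. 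The rest of your write-up (scale invariance of the two-dimensional Dirichlet energy, the Sobolev--Poincar\'e inequality for all $p<\infty$ in 2D, the integrability checks) is correct; for what it is worth, your weaker version would still suffice for the way the lemma is used in Step 6 and in the uniqueness argument, where the constant is anyway allowed to depend on $R$.
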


The assumptions in $\rho_0$, \eqref{H1} or \eqref{H2},   allow a generalization of the previous lemma.

\begin{lemma}(Lemma 2.4 in \cite{Hao24a})\label{Lemma3.7}
    Let $p\in[2,\infty)$ and $T\in(0,\infty)$. If $\rho_0$ satisfies \eqref{H1}, then there exists a constant $C>0$ depending only on $T,p,\norm{\rho_0}_{L^\infty},\norm{\sr u_0}, \| (1+|x|^2)\rho_0\|_{L^1}$, such that for $f \in \dot{H}^1(\R^2)$,
    \begin{equation}
        C^{-1}\norm{\srho f}_{L^p} \leq \norm{f}_{L^2(B_1)}+\norm{\nabla f}\leq C (\norm{\srho f}+\norm{\nabla f})~~~ \forall t \in [0,T].
    \end{equation}
    If $\rho_0$ satisfies \eqref{H2}, then there exists a constant $C>0$ depending only on $T,p,R_0,c_0,\norm{\rho_0}_{L^\infty}$, $\norm{\sqrt{\rho_0} u_0}$ such that for $f \in \dot{H}^1(\R^2)$,
    \begin{equation}
        \norm{f}_{H^1}+\norm{\srho f}_{L^p}\leq C (\norm{\srho f}+\norm{\nabla f})~~~ \forall t \in [0,T].
    \end{equation}
\end{lemma}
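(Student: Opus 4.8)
The plan is to bootstrap from the purely local estimate of Lemma~\ref{Lemma3.6}, whose only input is a ball carrying a fixed amount of mass, up to the global statements; the bridge is the transport of the mass lower bound provided by Lemma~\ref{lemma3.5}. First, since $\rho_0\not\equiv0$, I would fix $x_0,R_0,c_0$ with \eqref{lowrho}: in case \eqref{H1} one may take $x_0=0$ and quantify $R_0,c_0$ through $\|\rho_0\|_{L^1}$ and $\|(1+|x|^2)\rho_0\|_{L^1}$ by Chebyshev's inequality, while in case \eqref{H2} one has \eqref{lowrho} for \emph{every} $x_0$ with the same $R_0,c_0$. Setting $R_T:=\max\{1,\,R_0+2T\|\sqrt{\rho_0}u_0\|/\sqrt{c_0}\}$, Lemma~\ref{lemma3.5} gives $\int_{B(x_0,R_T)}\rho(t,x)\,dx\geq c_0/4$ for all $t\in[0,T]$ (for every $x_0$ in case \eqref{H2}), and then Lemma~\ref{Lemma3.6} with $\varrho=\rho(t,\cdot)$, $r_0=R_T$ yields, for every $p\in[2,\infty)$,
\begin{equation}\label{eq:loc}
\|f\|_{L^p(B(x_0,R_T))}\leq C_T\big(\|\nabla f\|_{L^2(B(x_0,R_T))}+\|\sqrt{\rho}\,f\|_{L^2(B(x_0,R_T))}\big),\qquad t\in[0,T],
\end{equation}
with $C_T=C_T(T,p,\rho^M,R_0,c_0,\|\sqrt{\rho_0}u_0\|)$ independent of $x_0$ (the $r_0$--dependence in Lemma~\ref{Lemma3.6} is explicit, and $R_T$ is controlled by the listed data).

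In case \eqref{H2} one globalizes directly: cover $\R^2$ by balls $B(x_j,R_T)$, $x_j\in R_T\Z^2$, with bounded overlap $N=N(R_T)$. Since these balls cover $\R^2$, $\|f\|_{L^p(\R^2)}^p\leq\sum_j\|f\|_{L^p(B(x_j,R_T))}^p$; inserting \eqref{eq:loc}, using $\ell^2\hookrightarrow\ell^p$ (valid since $p\geq2$) and the bounded overlap,
\begin{align*}
\|f\|_{L^p(\R^2)}^p
&\leq C_T^p\sum_j\big(\|\nabla f\|_{L^2(B(x_j,R_T))}^2+\|\sqrt{\rho}\,f\|_{L^2(B(x_j,R_T))}^2\big)^{p/2}\\
&\leq C_T^p\Big(\textstyle\sum_j\|\nabla f\|_{L^2(B(x_j,R_T))}^2+\sum_j\|\sqrt{\rho}\,f\|_{L^2(B(x_j,R_T))}^2\Big)^{p/2}\leq \big(C_T N^{1/2}\big)^p\big(\|\nabla f\|+\|\sqrt{\rho}\,f\|\big)^p.
\end{align*}
Taking $p=2$ gives $\|f\|_{L^2(\R^2)}\lesssim_T\|\nabla f\|+\|\sqrt{\rho}f\|$, hence $\|f\|_{H^1}\lesssim_T\|\nabla f\|+\|\sqrt{\rho}f\|$; for general $p$, $\|\sqrt{\rho}f\|_{L^p}\leq(\rho^M)^{1/2}\|f\|_{L^p}\lesssim_T\|\nabla f\|+\|\sqrt{\rho}f\|$, which is the \eqref{H2} assertion.

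In case \eqref{H1}, the left inequality is immediate: with $x_0=0$ one has $B_1\subseteq B(0,R_T)$, so \eqref{eq:loc} with $p=2$ gives $\|f\|_{L^2(B_1)}\leq\|f\|_{L^2(B(0,R_T))}\lesssim_T\|\nabla f\|+\|\sqrt{\rho}f\|$ (for a general center one first connects $B_1$ to $B(x_0,R_T)$ by a finite chain of overlapping balls and uses the scale-invariant planar Poincaré bound $|\bar{f}_B-\bar{f}_{B'}|\lesssim\|\nabla f\|_{L^2}$ for comparable overlapping balls). For the right inequality one must control $\|\sqrt{\rho}f\|_{L^p(\R^2)}$, and here I would first propagate the second moment: from $|X(t,y)-y|^2\leq t\int_0^t|u(s,X(s,y))|^2\,ds$, the divergence-free (hence measure-preserving) change of variables $x=X(t,y)$ and the weak energy inequality \eqref{energy_ineq},
$$\int_{\R^2}(1+|x|^2)\rho(t,x)\,dx=\int_{\R^2}(1+|X(t,y)|^2)\rho_0(y)\,dy\leq 2\|(1+|x|^2)\rho_0\|_{L^1}+2T^2\|\sqrt{\rho_0}u_0\|^2=:M_T$$
for $t\in[0,T]$. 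Then I would split $\|\sqrt{\rho}f\|_{L^p}^p=\int_{B_1}+\int_{B_1^c}$: on $B_1$, $\rho\leq\rho^M$ and $H^1(B_1)\hookrightarrow L^p(B_1)$ give a contribution $\lesssim(\|f\|_{L^2(B_1)}+\|\nabla f\|)^p$; on $B_1^c$, use $\rho^{p/2}\leq(\rho^M)^{p/2-1}\rho$ and a dyadic decomposition $B_1^c=\bigcup_{k\geq0}A_k$, $A_k=\{2^k\leq|x|<2^{k+1}\}$, for which $\int_{A_k}\rho\leq 2^{-2k}m_k$ with $\sum_k m_k\leq 2M_T$, and on each annulus combine Hölder with the scaled planar Sobolev–Poincaré bound $\|f-\bar{f}_{A_k}\|_{L^q(A_k)}\lesssim 2^{2k/q}\|\nabla f\|_{L^2(A_k)}$ and the chaining bound $|\bar{f}_{A_k}|\lesssim\|f\|_{L^2(B_1)}+k^{1/2}\|\nabla f\|$; the $2^{-2k}$ decay of $\int_{A_k}\rho$ then dominates the at-most-polynomial-in-$2^k$ growth coming from $f$, and the resulting series in $k$ sums to $\lesssim_T(\|f\|_{L^2(B_1)}+\|\nabla f\|)^p$.

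The routine parts are Steps above on the covering argument and the left inequality. The genuine obstacle is the right inequality in case \eqref{H1}: the weight $\sqrt{\rho}$ is only bounded and decays merely in an averaged $(1+|x|^2)$–$L^1$ sense, whereas in the plane $\dot H^1$ embeds into no global $L^q$ and its elements have only logarithmically growing dyadic averages. Reconciling these borderline behaviours — precisely, matching the propagated moment $M_T$ against the growth of $f$ on annuli, and handling the gap between exponents $2$ and $p$ on a weight of slow decay — is the delicate measure-dependent interpolation carried out in \cite{Hao24a}, and explains why the constant is allowed to depend on $T$ and on $\|(1+|x|^2)\rho_0\|_{L^1}$.
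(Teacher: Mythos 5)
The paper does not prove this lemma at all: it is imported verbatim as Lemma~2.4 of \cite{Hao24a}, with the remark that ``the proof of these lemmas can be found in \cite{Hao24a}.'' So there is no in-paper argument to compare against; what you have written is a self-contained reconstruction, and I find it essentially sound. The two pillars — propagating the mass lower bound via Lemma~\ref{lemma3.5} and feeding it into the local weighted Poincar\'e estimate of Lemma~\ref{Lemma3.6}, then globalizing by a bounded-overlap covering in case \eqref{H2}, and in case \eqref{H1} propagating the second moment of $\rho$ through the flow and the energy inequality before running a dyadic-annulus decomposition — are exactly the right mechanisms, and the individual steps check out: the $\ell^1\hookrightarrow\ell^{p/2}$ summation is valid for $p\geq 2$; the moment bound $\int(1+|x|^2)\rho(t)\,dx\leq M_T$ follows correctly from measure preservation, Cauchy--Schwarz in time and \eqref{energy_ineq}; and on each annulus the combination of $\int_{A_k}\rho\lesssim 2^{-2k}m_k$, the scale-invariant Sobolev--Poincar\'e bound and the $k^{1/2}$ chaining estimate for the averages does produce a summable series, provided you fix the H\"older exponent $q>2$ so that the net factor $2^{2k(1/q-1/q')}$ decays geometrically — state that choice explicitly rather than leaving it implicit.

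Two points to tighten. First, in case \eqref{H1} your Chebyshev step produces $c_0\sim\|\rho_0\|_{L^1}$, so the constant necessarily depends on a \emph{lower} bound for the total mass, not only on the quantities listed in the statement; this is unavoidable (the inequality $\|f\|_{L^2(B_1)}\leq C(\|\sqrt{\rho}f\|+\|\nabla f\|)$ degenerates as $\rho_0\to 0$), and you should say so rather than fold it silently into ``$R_0,c_0$.'' Second, your closing paragraph defers the hard part to \cite{Hao24a} even though the preceding paragraph already contains the resolution; either delete the deferral or promote the annulus computation to a full display, since as written it is the only place a referee could claim the argument is incomplete.
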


In the global existence proof in Theorem \ref{Th1}, we need a version of Lemma 3.4 in \cite{Danchin19}.
\begin{lemma}\label{ThDanchin}
    Let $p\in[1,\infty]$ and $\apli{f}{(0,T)\times \Om}{\R}$ such that $f\in L^2_T L^p$ and $t^{1-\frac{\ga}{2}}\partial_t f\in L^2_T L^p$. Then, $f\in H^{\frac{1}{2}-\al}_T L^p$, $\forall \al \in (\frac{1-\ga}{2},\frac{1}{2})$, $\forall \ga \in (0,1)$. Moreover,
    \begin{equation*}
        \norm{f}^2_{H^{\frac{1}{2}-\al}_T L^p}\leq \norm{f}^2_{L^2_T L^p} + C \norm{t^{1-\frac{\ga}{2}}\partial_t f}^2_{L^2_T L^p},
    \end{equation*}
    where $C$ depends on $\al$, $\ga$ and $T$.
\end{lemma}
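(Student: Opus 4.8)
The plan is to reduce the statement to a known real-interpolation characterization of fractional-in-time Sobolev spaces. The key observation is that $H^\sigma_T L^p = H^\sigma(0,T;L^p)$ can be obtained by real interpolation between $L^2_T L^p$ and $H^1_T L^p$: precisely, for $\sigma\in(0,1)$ one has $(L^2_T L^p, H^1_T L^p)_{\sigma,2}=H^\sigma_T L^p$ (this is the vector-valued analogue of the scalar interpolation in Proposition~\ref{thfrances}, valid since $L^p$ is a Banach space and interpolation commutes with taking $X$-valued Bochner spaces). So it suffices to show that $f$ lies in a suitable real interpolation space, with the quantitative bound on its norm. We target $\sigma=\tfrac12-\alpha$ for $\alpha\in\bigl(\tfrac{1-\gamma}{2},\tfrac12\bigr)$, i.e. $\sigma\in\bigl(0,\tfrac{\gamma}{2}\bigr)$.

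The first step is to handle the time-weight $t^{1-\gamma/2}$. Write $g(t)=t^{1-\gamma/2}\partial_t f(t)$, so $g\in L^2_T L^p$, and $\partial_t f(t) = t^{-(1-\gamma/2)} g(t)$. The issue is that $\partial_t f$ itself is only in a weighted $L^2$ space in time, not in $L^2_T L^p$; the weight degenerates at $t=0$. I would exploit this through the $K$-functional: for $u=f$, estimate $K(\tau,f) = \inf_{f=f_0+f_1}\bigl(\|f_0\|_{L^2_T L^p} + \tau\|f_1\|_{H^1_T L^p}\bigr)$ by a concrete splitting. A natural choice is to cut at a time level $t\sim\tau^{\beta}$ for an appropriate exponent: set $f_1$ to be a regularized (or time-averaged/truncated) version of $f$ that carries the derivative information on $[0,T]$, and $f_0 = f-f_1$. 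Using $\|\partial_t f\|_{L^2((t_0,T);L^p)} \le t_0^{-(1-\gamma/2)}\|g\|_{L^2_T L^p}$ on the region away from the origin, and controlling $f$ near the origin via the fundamental theorem of calculus $f(t)-f(t')=\int_{t'}^t \partial_t f$, one gets $K(\tau,f)\lesssim \tau^{\gamma/2}\,(\|f\|_{L^2_T L^p}+\|g\|_{L^2_T L^p})$ up to lower-order terms, which is exactly the decay $\tau^{\sigma}$ needed for membership in $(L^2_T L^p,H^1_T L^p)_{\sigma,2}$ for every $\sigma<\gamma/2$. Squaring and integrating $\int_0^\infty (\tau^{-\sigma}K(\tau,f))^2\,\tfrac{d\tau}{\tau}$ then yields the stated inequality $\|f\|_{H^{1/2-\alpha}_T L^p}^2\le \|f\|_{L^2_T L^p}^2 + C\|g\|_{L^2_T L^p}^2$ with $C=C(\alpha,\gamma,T)$.

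The main obstacle I expect is the careful bookkeeping in the $K$-functional estimate near $t=0$: because $1-\gamma/2$ can be close to $1$, the weight $t^{-(1-\gamma/2)}$ is barely integrable-squared-or-not, and one must be precise about which norm absorbs the contribution from the small-time slab $[0,\tau^\beta]$. This is exactly where the restriction $\alpha>\tfrac{1-\gamma}{2}$ (equivalently $\sigma<\gamma/2$) enters: the borderline exponent $\sigma=\gamma/2$ is excluded because the endpoint $K(\tau,f)\sim\tau^{\gamma/2}$ bound, while true, does not give square-integrability of $\tau^{-\gamma/2}K(\tau,f)$ against $d\tau/\tau$. I would also double-check the endpoint cases $p=1,\infty$: the interpolation identity $(L^2_TL^p,H^1_TL^p)_{\sigma,2}=H^\sigma_TL^p$ and the Bochner-space manipulations remain valid for all $p\in[1,\infty]$ since no duality or reflexivity of $L^p$ is used, only that it is a Banach space — so the argument goes through uniformly in $p$. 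Alternatively, if one prefers to avoid abstract interpolation, the same splitting can be fed directly into the Gagliardo–Slobodeckij seminorm $\int_0^T\int_0^T \|f(t)-f(t')\|_{L^p}^2/|t-t'|^{1+2\sigma}\,dt\,dt'$, splitting the double integral into near-diagonal and far-diagonal regions and using the pointwise bound $\|f(t)-f(t')\|_{L^p}\le \int_{t'}^t s^{-(1-\gamma/2)}\|g(s)\|_{L^p}\,ds$ together with Hardy's inequality; this is more hands-on but yields the same constant structure.
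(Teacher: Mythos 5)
Your proposal is correct in outline, but it takes a genuinely different primary route from the paper. The paper's proof is entirely elementary: it writes the $H^{\frac12-\al}_TL^p$ norm via the difference-quotient seminorm $\int_0^T h^{-(2-2\al)}\int_0^{T-h}\norm{f(t+h)-f(t)}_p^2\,dt\,dh$, bounds $\norm{f(t+h)-f(t)}_p\le\int_t^{t+h}\norm{\partial_sf(s)}_p\,ds$ by the fundamental theorem of calculus, inserts the weight $s^{1-\ga/2}$ and applies H\"older to pull out $\norm{t^{1-\ga/2}\partial_tf}_{L^2_TL^p}^2$, and then checks by direct computation that the remaining scalar triple integral $\int_0^Th^{-(2-2\al)}\int_0^{T-h}\int_t^{t+h}s^{-(2-\ga)}\,ds\,dt\,dh$ converges precisely when $\al>\frac{1-\ga}{2}$. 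This is essentially the ``hands-on'' alternative you sketch in your last paragraph (the paper uses H\"older plus an explicit integral rather than Hardy's inequality, but the mechanism is the same). Your main route instead goes through the $K$-functional for the couple $\parent{L^2_TL^p,H^1_TL^p}$ with the cut-off at time level $t_0\sim\tau$, which does work: the near-origin piece contributes $t_0^{\ga/2}\norm{g}_{L^2_TL^p}$ and the truncated piece contributes $\tau\, t_0^{-(1-\ga/2)}\norm{g}_{L^2_TL^p}+\tau\norm{f}_{L^2_TL^p}$, and balancing gives $K(\tau,f)\les\tau^{\ga/2}$, whence square-integrability of $\tau^{-\sigma}K(\tau,f)$ against $d\tau/\tau$ for every $\sigma<\ga/2$. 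What your approach buys is conceptual clarity about why the threshold $\al=\frac{1-\ga}{2}$ appears (it is exactly the failure of the endpoint $\sigma=\ga/2$ in the second index of the interpolation); what it costs is the extra machinery of identifying the Slobodeckij-type norm \eqref{3.16} with the real interpolation space for Banach-valued functions, a standard but nontrivial equivalence that the paper's two-line computation avoids entirely. Two small points to tighten if you execute the plan: make the splitting $f_1(t)=f(\max(t,t_0))$ (or similar) explicit rather than a generic ``regularization,'' and note that the choice $\beta=1$ in $t_0\sim\tau^{\beta}$ is forced by the balance $t_0^{\ga/2}=\tau t_0^{-(1-\ga/2)}$.
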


\begin{proof}
    By definition,
    \begin{equation}\label{3.16}
        \norm{f}^2_{H^{\frac{1}{2}-\al}_T L^p} = \norm{f}_{L^2_T L^p}^2 + \int_0^T\Big(\int_0^{T-h}\frac{\norm{f(t + h)-f(t)}^2_{L^p}}{h^{2-2\al}}dt\Big)dh. 
    \end{equation}
By the fundamental theorem of calculus, adding time weights followed by  Hölder's inequality, we have
\begin{equation*}
    \begin{split}
        \int_0^{T-h}\norm{f(t + h)-f(t)}^2_{L^p}dt & = \int_0^{T-h}\norm{\int_t^{t+h}\partial_t f(s)ds}^2_{L^p}dt \leq \norm{t^{\frac{2-\ga}{2}}\partial_t f}_{L^2_T L^p}^2\int_0^{T-h}\int_t^{t+h}\frac{1}{s^{2-\ga}}dsdt.
    \end{split}
\end{equation*}
Plugging the above inequality into \eqref{3.16},
\begin{equation*}
    \norm{f}^2_{H^{\frac{1}{2}-\al}_T L^p} \leq \norm{f}_{L^2_T L^p}^2 +  \norm{t^{\frac{2-\ga}{2}}\partial_t f}_{L^2_T L^p}^2 \int_0^T \frac{1}{h^{2-2\al}}\int_0^{T-h}\int_t^{t+h}\frac{1}{s^{2-\ga}}dsdtdh. 
\end{equation*}
It is easy to check that the above integral is finite for $\al \in \big(\frac{1-\ga}{2},\frac{1}{2}\big)$, which gives us the result.
\end{proof}

The uniqueness proof is based on a duality argument. The following auxiliary lemmas related to the estimation of a trilinear term will be needed.
\begin{lemma}(Lemma 4.1 in \cite{Hao24a})\label{tri1}
    There exists $C>0$ such that for any $f,g\in H^1$, $\varrho \in \dot{H}^{-1}\cap L^\infty$ with $\norm{\varrho}_{\dot{H}^{-1}}\leq 1/2$, and any $\ep \in (0,1/2]$ we have
    \begin{equation}
        \Big|\int_{\R^2}\varrho f gdx\Big|\leq C\parent{\norm{\varrho}_{\dot{H}^{-1}}\abs{\ln\ep}^{1/2}+\ep \norm{\varrho}_{L^\infty}}\norm{f}_{H^1}\norm{g}_{H^1}.
    \end{equation}
\end{lemma}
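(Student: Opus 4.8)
The plan is to estimate $\int_{\R^2} g f_1 f_2\,dx$ by splitting the low-regularity factor $g$ into a low-frequency and a high-frequency piece via a Littlewood--Paley cutoff at a frequency $N$ to be optimized, and then pairing the $\dot H^{-1}$ control of $g$ against the $\dot H^1$ regularity of the product $f_1 f_2$ on the low-frequency side, while using the $L^\infty$ bound on $g$ together with $L^2$ control of $f_1 f_2$ on the high-frequency side. First I would write $g = g_{\le N} + g_{>N}$ where $g_{\le N} = \sum_{2^j \le N} \Delta_j g$ (or simply a smooth Fourier multiplier supported on $|\xi|\lesssim N$), so that
\begin{equation*}
    \abs{\int_{\R^2} g f_1 f_2\,dx} \le \abs{\int_{\R^2} g_{\le N}\, f_1 f_2\,dx} + \abs{\int_{\R^2} g_{>N}\, f_1 f_2\,dx}.
\end{equation*}

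For the low-frequency term, I would use duality $\dot H^{-1}$--$\dot H^1$: $\abs{\int g_{\le N} f_1 f_2}\le \norm{g_{\le N}}_{\dot H^{-1}}\norm{f_1 f_2}_{\dot H^1}$. Since the cutoff is bounded on $\dot H^{-1}$, $\norm{g_{\le N}}_{\dot H^{-1}}\le \norm{g}_{\dot H^{-1}}$; actually a sharper route is to keep the frequency localization and bound $\norm{g_{\le N}}_{\dot H^{-1}}$ by $\norm{g}_{\dot H^{-1}}$ directly, while for $\norm{f_1 f_2}_{\dot H^1}$ one uses that in $2$D, $\norm{f_1 f_2}_{\dot H^1}\lesssim \norm{f_1}_{H^1}\norm{f_2}_{H^1}$ fails by a logarithm — this is exactly where the $\abs{\ln\ep}^{1/2}$ must enter. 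So instead I would estimate $\norm{f_1 f_2}_{\dot H^1}$ restricted to frequencies $\lesssim N$, giving a factor like $(\ln N)^{1/2}$ from the failure of the endpoint Sobolev embedding $H^1(\R^2)\hookrightarrow L^\infty$; concretely, $\norm{P_{\le N}(f_1 f_2)}_{\dot H^1}\lesssim (\ln(2+N))^{1/2}\norm{f_1}_{H^1}\norm{f_2}_{H^1}$, using $\norm{f_i}_{L^q}\lesssim q^{1/2}\norm{f_i}_{H^1}$ and Littlewood--Paley summation. For the high-frequency term I would pair $L^\infty$ with $L^1$: $\abs{\int g_{>N} f_1 f_2}\le \norm{g_{>N}}_{L^\infty}\norm{f_1 f_2}_{L^1}\le \norm{g}_{L^\infty}\norm{f_1}\norm{f_2}$ — but this has no smallness in $N$. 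To gain decay in $N$ I would instead move one derivative: write $g_{>N} = \sum_{2^j > N} \Delta_j g$ and note $\norm{\Delta_j g}_{L^\infty} = 2^{-j}\norm{2^j \Delta_j g}_{L^\infty}$, and then either use $\norm{\nabla \Delta_j g}_{L^\infty}\lesssim 2^j \norm{g}_{L^\infty}$ in reverse, or more cleanly use the $\dot H^{-1}$ bound again on the high side via Bernstein: $\norm{\Delta_j g}_{L^\infty}\lesssim 2^{j}\norm{\Delta_j g}_{L^2}\le 2^{2j}\norm{\Delta_j g}_{\dot H^{-1}}$, which goes the wrong way, so the correct split is to put the $L^\infty$ norm of $g$ (not $g_{>N}$) against the tail $\sum_{2^j>N}\norm{\Delta_j(f_1 f_2)}_{L^1}$ and exploit that $f_1 f_2 \in W^{1,1}$ with $\norm{\nabla(f_1f_2)}_{L^1}\lesssim \norm{f_1}_{H^1}\norm{f_2}_{H^1}$, so $\norm{\Delta_j(f_1f_2)}_{L^1}\lesssim 2^{-j}\norm{f_1}_{H^1}\norm{f_2}_{H^1}$ and the tail sums to $\lesssim N^{-1}\norm{f_1}_{H^1}\norm{f_2}_{H^1}$. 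Hence the high-frequency contribution is $\lesssim N^{-1}\norm{g}_{L^\infty}\norm{f_1}_{H^1}\norm{f_2}_{H^1}$.

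Combining, $\abs{\int g f_1 f_2}\lesssim \big((\ln(2+N))^{1/2}\norm{g}_{\dot H^{-1}} + N^{-1}\norm{g}_{L^\infty}\big)\norm{f_1}_{H^1}\norm{f_2}_{H^1}$, and then I would set $N \sim 1/\ep$ to obtain the claimed bound $\big(\norm{g}_{\dot H^{-1}}\abs{\ln\ep}^{1/2} + \ep\,\norm{g}_{L^\infty}\big)\norm{f_1}_{H^1}\norm{f_2}_{H^1}$; the hypotheses $\ep\in(0,1/2]$ and $\norm{g}_{\dot H^{-1}}\le 1/2$ ensure $\abs{\ln\ep}\ge \ln 2 > 0$ and keep the logarithmic factor meaningful (and let one absorb lower-order constants). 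The main obstacle I anticipate is the bookkeeping of the logarithmic loss: one must verify carefully that the only place a logarithm appears is in the low-frequency Sobolev estimate for the product, and that it genuinely scales like $(\ln N)^{1/2}$ rather than $\ln N$ — this comes from summing the $\ell^2$-type Littlewood--Paley square function with the borderline embedding constant $\norm{f_i}_{L^{q}}\lesssim \sqrt{q}\,\norm{f_i}_{H^1}$, together with choosing the exponents so that the pairing on dyadic blocks is $\dot H^{-1}$--$\dot H^1$ (an $\ell^2\times\ell^2$ Cauchy--Schwarz in $j$) rather than an $\ell^1$ sum. Getting that square-root power, rather than a full power of the logarithm, is the crux of the estimate.
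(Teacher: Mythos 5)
Your proposal is essentially correct, but note first that the paper does not prove this lemma at all: it is imported verbatim as Lemma~4.1 of \cite{Hao24a}, so there is no in-paper argument to compare against. Judged on its own, your frequency-threshold argument works. The low-frequency half is sound once you make the key estimate precise: writing $\nabla P_{\le N}(f_1f_2)=P_{\le N}(f_1\nabla f_2+f_2\nabla f_1)$, applying H\"older with $\|f_i\|_{L^p}\lesssim \sqrt{p}\,\|f_i\|_{H^1}$ and Bernstein $\|P_{\le N}F\|_{L^2}\lesssim N^{2/p}\|F\|_{L^r}$ (with $1/r=1/p+1/2$), and then choosing $p\sim \ln N$ gives exactly $\|P_{\le N}(f_1f_2)\|_{\dot H^1}\lesssim (\ln(2+N))^{1/2}\|f_1\|_{H^1}\|f_2\|_{H^1}$; this is cleaner than the $\ell^2$-summation heuristic you invoke at the end, but it is the same mechanism and it does yield the square root of the logarithm rather than a full power. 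The high-frequency half contains a detour through two estimates you correctly recognize as going the wrong way before you land on the right one: move the projection onto the product, pair $\|g\|_{L^\infty}$ against $\|(I-P_{\le N})(f_1f_2)\|_{L^1}$, and use $\|\Delta_j h\|_{L^1}\lesssim 2^{-j}\|\nabla h\|_{L^1}$ together with $\|\nabla(f_1f_2)\|_{L^1}\le 2\|f_1\|_{H^1}\|f_2\|_{H^1}$ to get the factor $N^{-1}$. Setting $N=1/\ep$ (so $N\ge 2$ since $\ep\le 1/2$) then gives the stated bound. Two small remarks: the hypothesis $\|g\|_{\dot H^{-1}}\le 1/2$ is never actually used in your argument (it is presumably there for the companion Lemma~\ref{tri2} and the Osgood step in the uniqueness proof), and when you transfer the cutoff from $g$ onto $f_1f_2$ you should say the multiplier is real and even so the projection is self-adjoint. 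Neither point is a gap.
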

\begin{lemma}(Lemma 4.2 in \cite{Hao24a})\label{tri2}
    For any $C_0>0$, there exists $C>0$ such that for any $f,g\in H^1$, $\varrho \in \dot{H}^{-1}\cap L^\infty$ with $\norm{\varrho}_{\dot{H}^{-1}}\leq 1/2$, $\norm{\varrho}_{L^\infty}+\int_{\R^2}\abs{\varrho}(1+\abs{x}^2)dx\leq C_0$, for any $\ep\in(0,1/2]$ we have 
    \begin{equation}
        \Big|\int_{\R^2}\varrho f g dx\Big|\leq C\parent{\norm{\varrho}_{\dot{H}^{-1}}\abs{\ln\ep}^{1/2}+\ep}\parent{\norm{f}_{L^2(B_1)}+\norm{\nabla f}}\parent{\norm{g}_{L^2(B_1)}+\norm{\nabla g}}.
    \end{equation}
\end{lemma}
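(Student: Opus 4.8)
The plan is to bootstrap from Lemma~\ref{tri1}, which has the same logarithmic shape but is formulated with the full $H^1$ norms of the $f_i$ and the $L^\infty$ norm of $g$. Since the constant $C$ is allowed to depend on $C_0$, the factor $\ep\|g\|_{L^\infty}$ only costs $C_0\ep$; the genuine point is to trade the global $L^2$ mass of the $f_i$ for the localized quantity $N_i:=\|f_i\|_{L^2(B_1)}+\|\nabla f_i\|$, which is made possible by the decay encoded in $\int_{\R^2}|g|(1+|x|^2)\,dx\le C_0$. Two background facts are used throughout: that $g\in\dot H^{-1}(\R^2)$ annihilates constants, so $\int_{\R^2}g=0$ and $|\langle g,h\rangle|\le\|g\|_{\dot H^{-1}}\|\nabla h\|$ depends on $h$ only modulo constants; and the Sobolev--Poincaré growth bound $\|f\|_{L^2(B_R)}\lesssim R\,\log^{1/2}(eR)\,N_f$ for $R\ge 1$.

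I would first subtract the mean $\bar f_i$ of $f_i$ over $B_1$ and set $\tilde f_i:=f_i-\bar f_i$. Using $\int g=0$ to discard the constant$\times$constant piece,
\[
\int_{\R^2}g f_1f_2=\int_{\R^2}g\,\tilde f_1\tilde f_2+\bar f_2\,\langle g,\tilde f_1\rangle+\bar f_1\,\langle g,\tilde f_2\rangle ,
\]
and the last two terms are $\lesssim\|g\|_{\dot H^{-1}}N_1N_2$ because $|\bar f_j|\lesssim\|f_j\|_{L^2(B_1)}$ and $|\langle g,\tilde f_i\rangle|\le\|g\|_{\dot H^{-1}}\|\nabla f_i\|$. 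For the bilinear term, fix a cutoff $\chi$ with $\chi\equiv1$ on $B_2$ and $\mathrm{supp}\,\chi\subset B_4$ and split $\tilde f_i=\chi\tilde f_i+(1-\chi)\tilde f_i$. The near part $\int g(\chi\tilde f_1)(\chi\tilde f_2)$ is handled by Lemma~\ref{tri1} applied to the compactly supported functions $\chi\tilde f_i$, for which the Poincaré bound gives $\|\chi\tilde f_i\|_{H^1}\lesssim N_i$; with the same $\ep$ and $\|g\|_{L^\infty}\le C_0$ this contributes $\le C(\|g\|_{\dot H^{-1}}|\ln\ep|^{1/2}+C_0\ep)N_1N_2$.

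The core is the far part, supported in $B_2^c$, which I would treat by a dyadic decomposition $B_2^c=\bigcup_{k\ge1}\Gamma_k$, $\Gamma_k=B_{2^{k+1}}\setminus B_{2^k}$, with a subordinate partition of unity $\{\chi_k\}$. On each $\Gamma_k$ the weight yields the twofold smallness $|\langle g,\chi_k\rangle|\lesssim\min\{\|g\|_{\dot H^{-1}},\,C_0\,4^{-k}\}$ and $\|g\chi_k\|_{L^1}\lesssim C_0\,4^{-k}$, while $\|g\chi_k\|_{\dot H^{-1}}\lesssim\|G\|_{L^2(\Gamma_k)}$ after writing $g=\div G$ with $\|G\|_{L^2}=\|g\|_{\dot H^{-1}}$ and absorbing the localization commutator. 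Splitting $\tilde f_i$ on $\Gamma_k$ into its annular mean (of size $\lesssim k^{1/2}N_i$) plus its oscillation (of $L^2(\Gamma_k)$-size $\lesssim 2^k\|\nabla f_i\|_{L^2(\Gamma_k)}$, where $\sum_k\|\nabla f_i\|_{L^2(\Gamma_k)}^2\lesssim N_i^2$), one decomposes $\int g\chi_k\tilde f_1\tilde f_2$ into a mean$\times$mean term, controlled by $|\langle g,\chi_k\rangle|$, plus terms carrying at least one oscillation factor, controlled either through $\|g\chi_k\|_{\dot H^{-1}}$ (after pulling out a cutoff so the remaining factor is compactly supported in $\dot H^1$) or through a rescaled application of Lemma~\ref{tri1} on the unit annulus with an annulus-dependent parameter $\ep_k\sim\ep\,8^{-k}$, so that $|\ln\ep_k|^{1/2}\lesssim|\ln\ep|^{1/2}+k^{1/2}$. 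Summing in $k$---by Cauchy--Schwarz against $\sum_k\|G\|_{L^2(\Gamma_k)}^2\le\|g\|_{\dot H^{-1}}^2$ and $\sum_k\|\nabla f_i\|_{L^2(\Gamma_k)}^2\lesssim N_i^2$, and by splitting the $k$-range at $4^{k}\sim C_0/\|g\|_{\dot H^{-1}}$ for the mean$\times$mean term---and adding the near part gives the claim. I expect this last summation to be the main obstacle: the weight $1+|x|^2$ provides only quadratic decay of $g$, which is exactly critical against the logarithmically growing local $L^2$ norms of $\dot H^1$ functions in the plane, so reproducing the factor $|\ln\ep|^{1/2}$ (and not a higher power of the logarithm) requires a careful balancing of $\ep_k$, of the splitting radius, and of the partition of unity.
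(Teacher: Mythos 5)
First, a point of reference: the paper does not prove this lemma at all — it is imported verbatim as Lemma 4.2 of \cite{Hao24a}, and the citation is the entirety of the paper's "proof". So your attempt has to stand on its own as a reconstruction of the result, not as a match to an argument in the text.

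As a reconstruction, your architecture assembles the right ingredients (mean subtraction using $\int g=0$, a near/far splitting, dyadic annuli on which the second moment gives $\norm{g}_{L^1(\Gamma_k)}\lesssim C_0 4^{-k}$, the growth bound $\norm{f}_{L^2(B_R)}\lesssim R\log^{1/2}(eR)N_f$, and rescaled applications of Lemma~\ref{tri1}), but the final summation — which you yourself flag as the main obstacle — genuinely fails, and not for reasons of mere bookkeeping. Consider your mean$\times$mean contribution: with $|\bar f_{i,k}|\lesssim k^{1/2}N_i$ and $|\langle g,\chi_k\rangle|\lesssim\min\{\norm{g}_{\dot{H}^{-1}},\,C_04^{-k}\}$ (note $\norm{\nabla\chi_k}_{L^2}\sim1$ in 2D), the sum $\sum_k k\min\{\norm{g}_{\dot{H}^{-1}},C_04^{-k}\}$, split at $4^{K}\sim C_0/\norm{g}_{\dot{H}^{-1}}$ as you propose, is of order $\norm{g}_{\dot{H}^{-1}}\ln^2\big(1/\norm{g}_{\dot{H}^{-1}}\big)$. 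This is \emph{not} dominated by $C\big(\norm{g}_{\dot{H}^{-1}}|\ln\ep|^{1/2}+\ep\big)$ uniformly in $\ep$: taking $\ep=\norm{g}_{\dot{H}^{-1}}=\delta$ would force $\ln^2(1/\delta)\lesssim|\ln\delta|^{1/2}+1$, false for small $\delta$. The oscillation terms suffer the same disease: the factors $k^{1/2}$ coming from $|\ln\ep_k|^{1/2}$ and from the annular means must be paired with $\norm{\nabla f_i}_{L^2(\Gamma_k)}$ or $\norm{G}_{L^2(\Gamma_k)}$, and $\sum_k k\norm{\nabla f_i}_{L^2(\Gamma_k)}^2$ is not controlled by $\norm{\nabla f_i}^2$, since the gradient may concentrate on a single distant annulus. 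So the scheme as written proves at best a version of the lemma with a higher power of the logarithm — and this is not a cosmetic loss: in the uniqueness argument of Section~3 the lemma produces the Osgood modulus $r|\ln r|^{1/2}$, and $\int_0 dr/(r|\ln r|^{\sigma})$ diverges only for $\sigma\le1$, so a $\ln^2$ version would break the uniqueness proof. A genuine fix must exploit cancellation you discard by taking absolute values annulus by annulus (the numbers $\langle g,\chi_k\rangle$ sum to $O(\norm{g}_{\dot{H}^{-1}})$, they are not each of that size), e.g.\ via Abel summation in $k$, or avoid the spatial dyadic sum altogether in favor of a Littlewood--Paley decomposition of $g$, where the single factor $|\ln\ep|^{1/2}$ arises from one Cauchy--Schwarz over $O(|\ln\ep|)$ frequency blocks. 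As it stands, the proposal does not establish the stated inequality.
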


\subsection{Auxiliary lemmas for Theorem~\ref{th2}}
In the second part of the article, we work on the unit torus, which allows us to use the following Poincaré-type inequality instead of Lemma \ref{Lemma3.7}.

\begin{lemma}(Lemma A.1 in \cite{Danchin19})\label{PoincaréFull}
    Let $\apli{\varrho}{\T^2}{\R}$, $\varrho \geq0$ and $\varrho \not \equiv 0$. For every $f\in H^1(\T^2)$,
    \begin{equation*}
        \norm{f} \leq \frac{1}{M}\Big|\int_{\T^2}\varrho fdx\Big|+\Big(1+\frac{1}{M}\norm{M-\varrho }\Big)\norm{\nabla f},\quad\mbox{where}\quad M=\displaystyle \frac{1}{\abs{\T^2}}\int_{\T^2} \varrho dx.
\end{equation*}
\end{lemma}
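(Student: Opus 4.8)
The plan is to run a weighted Poincaré--Wirtinger argument, decomposing $f$ into its mean value on the torus and its oscillation. Set $\bar f=\int_{\T^2}f\,dx$ and $g=f-\bar f$, so that $\int_{\T^2}g\,dx=0$ and $\bar f$ is a constant, orthogonal to $g$ in $L^2(\T^2)$; hence $\norm{f}^2=\bar f^{\,2}+\norm{g}^2$ (using $\abs{\T^2}=1$), and in particular $\norm{f}\le\abs{\bar f}+\norm{g}$. Since on the unit torus the first nonzero eigenvalue of $-\Delta$ is at least one, the Poincaré--Wirtinger inequality gives $\norm{g}\le\norm{\nabla f}$. It therefore suffices to estimate $\abs{\bar f}$.

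For this I would test the decomposition $f=\bar f+g$ against $a$. Using $\int_{\T^2}a\,dx=M$ (again $\abs{\T^2}=1$) together with $\int_{\T^2}g\,dx=0$, which lets us freely replace $a$ by $a-M$ in the oscillation term,
\[
\int_{\T^2}af\,dx=\bar f\int_{\T^2}a\,dx+\int_{\T^2}ag\,dx=M\bar f+\int_{\T^2}(a-M)g\,dx .
\]
Since $a\ge0$ and $a\not\equiv0$ we have $M>0$, so we may solve for $\bar f$ and apply the Cauchy--Schwarz inequality to the last term:
\[
M\abs{\bar f}\le\Bigl|\int_{\T^2}af\,dx\Bigr|+\norm{M-a}\,\norm{g}.
\]

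Dividing by $M$, inserting this bound into $\norm{f}\le\abs{\bar f}+\norm{g}$, and estimating $\norm{g}\le\norm{\nabla f}$ in both occurrences yields exactly
\[
\norm{f}\le\frac1M\Bigl|\int_{\T^2}af\,dx\Bigr|+\Bigl(1+\frac1M\norm{M-a}\Bigr)\norm{\nabla f},
\]
which is the claim. There is no genuine obstacle in this argument: the only points demanding care are verifying that $M>0$ is a consequence of $a\ge0$, $a\not\equiv0$, and keeping track of the normalization of the unit torus, so that $\abs{\T^2}=1$ and the Poincaré constant being $\le1$ produce precisely the stated coefficients.
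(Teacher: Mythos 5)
Your proof is correct, and the paper itself gives no proof of this lemma (it is quoted verbatim from Lemma A.1 of the cited reference, whose argument is exactly the mean-plus-oscillation decomposition you use: Poincar\'e--Wirtinger for the zero-mean part $g$, and testing against $a$ to solve for the mean $\bar f$, using $M>0$ and Cauchy--Schwarz on $\int_{\T^2}(a-M)g\,dx$). The only point worth noting is that on the unit torus the spectral gap is $4\pi^2$, so your argument actually yields the sharper coefficient $\tfrac{1}{2\pi}$ in front of $\norm{\nabla f}$; the stated inequality follows a fortiori.
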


Working on the torus allows us to assume that $\rho_0 u_0$ has zero mean. Indeed, if this is not the case, we can reduce to that situation by the change of variables
$\tilde{u}=u-C$, $\tilde{x}=x+Ct$, with $C=\parent{\int_{\T^2}\rho_0dx}^{-1}\int_{\T^2}\rho_0u_0dx$.
Therefore, since the mean of the momentum is preserved in time and $\rho D_t u$ necessarily has zero mean, the previous lemma can be simplified to
\begin{equation}\label{Poincaré}
       \norm{f}\leq C \norm{\rho}\norm{\nabla f},
\end{equation}
for $f=u$, $f=D_tu$. It should be pointed out that this inequality is the key point in obtaining the decay of the solution in Lemma \ref{ThDecay}.

One of the main issues of Section \ref{sect:viscosity contrast} is the $L^1_T L^\infty$-norm of the velocity gradient. To achieve our goal, we make use of a version of Lemma 3.2 in \cite{Danchin19}, originally Lemma 2 in \cite{Desjardins97b}.
\begin{lemma}\label{LemDanchin2}
    There exists a constant $C>0$ such that for every $ f \in H^1(\T^2)$, $\varrho \in \LpT{\infty}$ with $0\leq \varrho(x)\leq \varrho^M$, mean value of $\varrho f$ zero and $2<p<\infty$, it holds
    \begin{equation*}
        \int_{\T^2} \varrho \abs{f}^pdx \leq C \norm{\sqrt{\varrho} f}^2\norm{\nabla f}^{p-2}\log^{\frac{p-2}{2}}\Big(e+\frac{\norm{\varrho-M}^2}{M^2}+\frac{\varrho^M\norm{\nabla f}^2}{\norm{\sqrt{\varrho} f}^2}\Big),
\,\,\mbox{where}\,\, M=\int_{\T^2}\varrho dx.   \end{equation*}
\end{lemma}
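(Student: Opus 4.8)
The crude estimate $\int_{\T^{2}}\rho\,|f|^{p}\,dx\le\rho^{M}\|f\|_{L^{p}}^{p}$ followed by the sharp two-dimensional Gagliardo--Nirenberg inequality is too lossy for our purposes: it replaces $\|\srho f\|^{2}$ by $\rho^{M}\|f\|^{2}$ and carries a constant that grows like $p^{p/2}$. The mechanism to do better is that on the set where $|f|$ is large the available $\rho$-mass is necessarily small (by the critical Sobolev embedding), while on the set where $|f|$ is only moderate the weight can be retained directly. Accordingly, the plan is to argue through the distribution function: writing
\[
\int_{\T^{2}}\rho\,|f|^{p}\,dx \;=\; p\int_{0}^{\infty}\lambda^{p-1}\,m(\lambda)\,d\lambda,\qquad m(\lambda):=\int_{\{|f|>\lambda\}}\rho\,dx,
\]
it suffices to bound $m(\lambda)$ in two complementary ranges of $\lambda$ and then split the integral at a well-chosen threshold.

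For the low-to-moderate range I would use Chebyshev's inequality in the weighted space, $m(\lambda)\le \|\srho f\|^{2}/\lambda^{2}$, noting that $\lambda^{p-1}\cdot\lambda^{-2}$ is integrable at the origin precisely because $p>2$. For the large range I would invoke the critical Sobolev (Moser--Trudinger) embedding on the torus: for zero-mean $g$, $\int_{\T^{2}}\exp\!\big(c\,g^{2}/\|\nabla g\|^{2}\big)\,dx\le C$ with universal $c,C$; applying this to $g=f-\bar f$, $\bar f:=\int_{\T^{2}}f\,dx$, gives $\big|\{|f-\bar f|>t\}\big|\le C\exp(-c\,t^{2}/\|\nabla f\|^{2})$. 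Since $\rho f$ has zero mean, Lemma~\ref{PoincaréFull} yields $|\bar f|\le\|f\|\le C_{\rho}\|\nabla f\|$ with $C_{\rho}:=1+\|\rho-M\|/M$, whence
\[
m(\lambda)\ \le\ \rho^{M}\,\big|\{|f|>\lambda\}\big|\ \le\ C\,\rho^{M}\,\exp\!\Big(-c\,\big(\lambda/\|\nabla f\|-C_{\rho}\big)_{+}^{2}\Big).
\]
This is exactly the step where the hypothesis that $\rho f$ has mean zero enters, and it is the source of the $\|\rho-M\|^{2}/M^{2}$ term inside the logarithm (via $C_{\rho}^{2}\lesssim 1+\|\rho-M\|^{2}/M^{2}$).

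To conclude, I would split the $\lambda$-integral at the threshold $\Lambda$ at which the two bounds balance, $\|\srho f\|^{2}/\Lambda^{2}\sim\rho^{M}e^{-c\Lambda^{2}/\|\nabla f\|^{2}}$; solving this implicit relation gives
\[
\Lambda^{2}\ \sim\ \|\nabla f\|^{2}\,\Big(1+\log\Big(e+\tfrac{\rho^{M}\|\nabla f\|^{2}}{\|\srho f\|^{2}}+\tfrac{\|\rho-M\|^{2}}{M^{2}}\Big)\Big).
\]
On $(0,\Lambda)$ the weighted Chebyshev bound produces a term $\tfrac{p}{p-2}\|\srho f\|^{2}\Lambda^{p-2}$, while on $(\Lambda,\infty)$ the super-Gaussian tail is, by the very choice of $\Lambda$, also controlled by a multiple of $\|\srho f\|^{2}\Lambda^{p-2}$; inserting the value of $\Lambda$ then yields the claim. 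I expect the only delicate points to be the bookkeeping in this optimization --- in particular verifying that the large-$\lambda$ contribution is genuinely dominated by the small-$\lambda$ one, which forces $\Lambda$ to be taken slightly above $\|\nabla f\|\max(C_{\rho},\sqrt p)$ (the $\sqrt p$ being harmless since in the applications $p$ stays in a fixed compact subset of $(2,\infty)$), and keeping track of the elementary factors $p/(p-2)$ and $c^{-(p-2)/2}$. An equivalent route, giving the same bound, is the Littlewood--Paley one: split $f=S_{N}f+(I-S_{N})f$, bound $\|S_{N}f\|_{L^{\infty}}\lesssim\|f\|+\sqrt{N}\,\|\nabla f\|$ by Bernstein together with the Cauchy--Schwarz (Brezis--Gallouet) trick over dyadic blocks, estimate $\|(I-S_{N})f\|_{L^{p}}$ using its $2^{-2N/p}$ decay, handle the product $\int\rho|f|^{2}|f|^{p-2}$ phase by phase, and optimize over the integer $N$.
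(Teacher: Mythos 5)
The paper does not actually prove this lemma: it is quoted as a version of Lemma 3.2 in \cite{Danchin19} (originally Lemma 2 in \cite{Desjardins97b}), so your proposal can only be measured against the statement itself. Your mechanism --- layer-cake representation of $\int\rho|f|^p$, weighted Chebyshev for moderate $\lambda$, Trudinger--Moser tail for large $\lambda$, and optimization of the splitting threshold --- is a legitimate route to inequalities of this type, and it does correctly produce the structure $\norm{\srho f}^2\norm{\nabla f}^{p-2}$ together with the $\log\parent{e+\rho^M\norm{\nabla f}^2/\norm{\srho f}^2}$ contribution with a $p$-uniform constant.

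The gap is in the treatment of the mean $\bar f$, and it changes where $\norm{\rho-M}^2/M^2$ ends up. Your tail bound $m(\lambda)\le C\rho^M\exp\parent{-c(\lambda/\norm{\nabla f}-C_\rho)_+^2}$ is vacuous for $\lambda\le C_\rho\norm{\nabla f}$, so you are forced --- as you acknowledge --- to take the threshold $\Lambda\gtrsim C_\rho\norm{\nabla f}$ with $C_\rho=1+\norm{\rho-M}/M$. But then the low-range Chebyshev contribution is $\tfrac{p}{p-2}\norm{\srho f}^2\Lambda^{p-2}\gtrsim\norm{\srho f}^2\norm{\nabla f}^{p-2}\parent{1+\norm{\rho-M}^2/M^2}^{(p-2)/2}$: the density-oscillation term comes out as a polynomial prefactor, not inside the logarithm. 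Your balancing relation $\Lambda^2\sim\norm{\nabla f}^2\parent{1+\log(e+X+Y)}$, with $X=\norm{\rho-M}^2/M^2$ and $Y=\rho^M\norm{\nabla f}^2/\norm{\srho f}^2$, is inconsistent with your own constraint $\Lambda\ge\norm{\nabla f}\max(C_\rho,\sqrt p)$ once $X$ is large compared with $\log X$; in the intermediate window $\norm{\nabla f}\sqrt{\log(e+X+Y)}\lesssim\lambda\lesssim\sqrt{X}\,\norm{\nabla f}$ you control $m(\lambda)=\int_{\{|f|>\lambda\}}\rho\,dx$ only by Chebyshev, and that window is exactly what destroys the claimed constant. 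What your argument actually yields is the weaker inequality with $\parent{1+X+\log(e+Y)}^{(p-2)/2}$ in place of $\log^{(p-2)/2}(e+X+Y)$. The same defect appears in your Littlewood--Paley variant, through $\norm{S_Nf}_{L^\infty}\lesssim\norm{f}+\sqrt N\norm{\nabla f}$ combined with $\norm{f}\le C_\rho\norm{\nabla f}$. The weaker inequality would in fact suffice for every use of the lemma in this paper, since the distribution of $\rho$ is transported and $X$ is a constant of the data; but it is not the stated lemma, and closing the gap requires a genuinely sharper estimate of the weighted level sets in the intermediate range, which neither of your two routes supplies.
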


The uniqueness in this case will be achieved in Lagrangian variables as in \cite{Danchin19, Gancedo23}. We used the following version of Lemma A.2 in \cite{Danchin19}.

\begin{lemma}\label{Control_w}.
    Let $A$ be a matrix-valued function on $[0,T]\times \T^2$ satisfying $\det(A)=1$. There exists a constant c such that if 
    \begin{equation*}
        \norm{\I - A}_{L^\infty_T L^\infty}+\norm{\partial_tA}_{L^p_T L^q} < c,
    \end{equation*}
    with $p=\frac{2}{2-\ga}$ and $q\geq 2$, then for all functions $g \in L^2_t L^2$ satisfying
\begin{equation*}
    \begin{array}{lcr}
       g = \nabla \c R,  & R\in L^\infty_T L^2, & \partial_tR \in L^p_T L^r, 
    \end{array}
\end{equation*}    
    with $r=\frac{2q}{2+q}$, the equation $\div(Aw)=g$ in $[0,T]\times \T^2$ has a solution in the space
    $$
    W_T=\llav{w\in L^\infty_T L^2, \nabla w \in L^2_T L^2, \partial_t w \in L^p_T L^r},
    $$
    that satisfies
    $$
    \begin{array}{c}
\norm{w}_{L^\infty_T L^2}\leq C \norm{R}_{L^\infty_T L^2},\quad
\norm{\nabla w}_{L^2_T L^2} \leq C \norm{g}_{L^2_T L^2},\quad         \norm{\partial_t w}_{L^p_T L^r} \leq C \norm{R}_{L^\infty_T L^2} + C \norm{\partial_tR}_{L^p_T L^r}.
    \end{array}
    $$
\end{lemma}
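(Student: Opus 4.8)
\textbf{Proof proposal for Lemma~\ref{Control_w}.}

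The plan is to solve the elliptic equation $\div(Aw) = g = \div(R)$ by a perturbation argument around the constant-coefficient case $A = \I$, and then differentiate in time to control $\partial_t w$. First I would recast the problem: since $\det A = 1$ we may write $Aw = w + (A-\I)w$, so the equation becomes $\div w = g - \div((A-\I)w)$. This is a fixed-point formulation: given $w$ in the space $W_T$, define $\Phi(w)$ to be the unique solution of $\div(\Phi(w)) = g - \div((A-\I)w)$ obtained by the standard Bogovskii-type / Leray projector construction on $\T^2$ (using that the right-hand side has zero mean, which holds because both $g$ and $\div((A-\I)w)$ are divergences). One checks that $\Phi$ is a contraction on a suitable ball of $W_T$ using the smallness of $\norm{\I-A}_{L^\infty_T L^\infty}$: indeed $\norm{\div((A-\I)w)}_{L^2_T L^2} \lesssim \norm{\I-A}_{L^\infty_T L^\infty}\norm{\nabla w}_{L^2_T L^2} + \norm{\nabla(A-\I)}\,\|w\|$, and the first term is absorbed while the second is a compact-type perturbation handled by the structure $g = \div R$ with $R \in L^\infty_T L^2$.

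The three estimates are obtained as follows. For $\norm{w}_{L^\infty_T L^2}$: since the equation at each fixed time reads $\div(Aw) = \div R$, writing $Aw = R + \nabla q$ with $q$ determined by $\Delta$-type elliptic regularity (the compatibility of means again being automatic), and inverting $A$ (whose inverse is bounded because $\norm{\I-A}_{L^\infty}$ is small), gives $\norm{w(t)} \leq C\norm{R(t)}$ pointwise in $t$, hence the $L^\infty_T L^2$ bound. For $\norm{\nabla w}_{L^2_T L^2}$: testing the equation against $w$ itself and integrating by parts, $\int A\nabla w : \nabla w = \int g\, w$-type manipulations together with coercivity of $A$ (again from smallness) and $g \in L^2_T L^2$ yield $\norm{\nabla w}_{L^2_T L^2} \leq C\norm{g}_{L^2_T L^2}$. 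The delicate part is $\partial_t w$: I would differentiate $\div(Aw) = \div R$ in time to get $\div(A\,\partial_t w) = \div R_t - \div(A_t w)$, so $\partial_t w$ solves the same type of elliptic problem with source $\div R_t - \div(A_t w)$. Applying the already-established $L^\infty_T L^2 / L^p_T L^r$ bounds for this elliptic problem, one needs $\div(A_t w) \in L^p_T L^r$ in the form of a divergence of an $L^p_T L^r$ field: write it as $\div(A_t w)$ with $A_t \in L^p_T L^q$ and $w \in L^\infty_T L^2 \cap L^2_T (\textup{better})$, so by Hölder $A_t w \in L^p_T L^{r}$ with $r = \frac{2q}{2+q}$ (the exponent $\frac1r = \frac1q + \frac12$), using the already-obtained $\norm{w}_{L^\infty_T L^2} \leq C\norm{R}_{L^\infty_T L^2}$. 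This gives exactly $\norm{\partial_t w}_{L^p_T L^r} \leq C\norm{R}_{L^\infty_T L^2} + C\norm{R_t}_{L^p_T L^r}$.

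I expect the main obstacle to be bookkeeping the interplay of the time-integrability exponents: one must verify that the choice $p = \frac{2}{2-\ga}$, $q \geq 2$, $r = \frac{2q}{2+q}$ makes all the Hölder products land in the right spaces, and that the smallness threshold $c$ can be chosen uniformly (independent of $T$) so that the contraction argument closes on $[0,T]$ for every $T$. A secondary technical point is ensuring all the zero-mean / compatibility conditions for solvability of the $\div$-equation on $\T^2$ are preserved through the fixed-point iteration and through differentiation in time; this is where the hypothesis $g = \div R$ (rather than a general $L^2$ function) is essential, since it guarantees that the pressure-type corrector stays in the correct spaces and that the iterates remain in $W_T$ without loss.
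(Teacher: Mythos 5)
The paper does not prove this lemma at all: it is quoted verbatim from the cited reference (Lemma A.2 in \cite{Danchin19}, due to Danchin--Mucha), so the comparison has to be made against that proof. Your global strategy coincides with it: a perturbative fixed point around $A=\I$ based on $\div w = \div R - \div\bigl((A-\I)w\bigr)$, solved with the order-zero operator $\nabla\Delta^{-1}\div$ on $\T^2$, followed by differentiation in time and the H\"older pairing $\tfrac1r=\tfrac1q+\tfrac12$ to get $A_tw\in L^p_TL^r$ from $A_t\in L^p_TL^q$ and $w\in L^\infty_TL^2$. That last step, and the observation that mean-zero compatibility is automatic for divergence-form data, are correctly identified.

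There are, however, two concrete gaps. First, your estimate $\norm{\div((A-\I)w)}\lesssim \norm{\I-A}_{L^\infty}\norm{\nabla w}+\norm{\nabla(A-\I)}\norm{w}$ invokes $\nabla A$, which is controlled by nothing in the hypotheses ($A$ is only $L^\infty$ in space, with $A_t\in L^p_TL^q$); calling this term a ``compact-type perturbation'' does not rescue it. The whole point of the construction is to never differentiate $A$: one iterates $w^{n+1}=\nabla\Delta^{-1}\div\bigl(R-(A-\I)w^n\bigr)$, so that only the product $(A-\I)w^n\in L^2$ is needed and the contraction in $L^\infty_TL^2$ follows from $\norm{\I-A}_{L^\infty}$ small; the hypothesis $\det A=1$ (in the application, $A=(\nabla X)^{-1}$ with a measure-preserving flow) supplies the Piola-type divergence-free structure that lets $\div(A\,\cdot)$ act without producing $\nabla A$. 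Second, your derivation of $\norm{\nabla w}_{L^2_TL^2}\le C\norm{g}_{L^2_TL^2}$ by ``testing against $w$'' and invoking ``coercivity of $\int A\nabla w:\nabla w$'' treats $\div(Aw)=g$ as a second-order elliptic equation; it is a first-order divergence constraint, there is no such bilinear form, and the pairing $\langle \div(Aw),w\rangle$ does not even typecheck (the equation is scalar, $w$ is a vector). The gradient bound instead comes from the fact that $w$ is constructed as a gradient field, so $\norm{\nabla w}_{L^2}\lesssim\norm{\div w}_{L^2}$ by Calder\'on--Zygmund, combined with the fixed-point identity for $\div w$. Your $L^\infty_TL^2$ bound via ``$Aw=R+\nabla q$ and invert $A$'' is likewise not how the estimate closes (it leaves $\nabla q$ uncontrolled); it is the boundedness of $\nabla\Delta^{-1}\div$ on $L^2$ plus absorption that gives $\norm{w}_{L^\infty_TL^2}\le C\norm{R}_{L^\infty_TL^2}$.
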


\section{Proof of Theorem \ref{Th1}}\label{sect:INS constant vis.}
 \subsection{Existence.} 
 Existence is established through a strategy that combines mollification with compactness methods. After smoothing the initial data, we establish a priori estimates for the unique smooth solution. A key component is an estimate showing that the velocity is $L^1$-in-time Lipschitz-in-space, which guarantees global-in-time existence. Since these estimates are uniform with respect to the mollifier parameter, we can use compactness to pass to the limit (see Step 6 below). We now move on to derive the a priori estimates.

 \medskip
 \textbf{Step 1: }$\srho u \in L^\infty_T L^2$ and $\nabla u \in L^2_T L^2$.

\medskip
    Testing the momentum equation of \eqref{VJ} with $u$ we obtain 
    \begin{equation*}
        \produ{\rho \partial_t u}{u}+\produ{\rho \matt{u}}{u}= \produ{\De u}{u} + \produ{\nabla P}{u}.
    \end{equation*}
    Using that $\div u=0$, the equation for $\rho$ and integration by parts we get
    \begin{equation*}
            \frac{1}{2}\frac{d}{dt}\norm{\srho u}^2=-\norm{\nabla u}^2.
    \end{equation*}
    After integrating in time we obtain the classical weak energy inequality \eqref{energy_ineq} for $\mu=1$.

\medskip
\textbf{Step 2: }  $t^{\frac{1-(\ga+s)}{2}}\sqrt{\rho}D_t u\in L^2_T L^2$ and 
        $t^{\frac{1-(\ga+s)}{2}}\nabla{u}\in L^\infty_T L^2$.

\medskip
We proceed with an interpolation argument and time-weighted estimates. 
Consider the linearized problem 
\begin{equation*}
    \left\{
    \begin{aligned}
       \mat{\rho}&=0, \\
       \rho\parent{\mat{v}}&=\De v + \nabla P, \\
       \nabla \c v &= 0, \\
       \parent{\rho,v}|_{t=0}&=\parent{\rho_0,v_0}.
 \end{aligned}
\right.
\end{equation*}
We test the momentum equation against $\dv=\mat{v}$,
    \begin{equation}\label{2.2}
        \produ{\rho\dv}{\dv}= \produ{\De v}{\dv} + \produ{\nabla P}{\dv}.
    \end{equation}
The second term in the right-hand side is bounded as follows
    \begin{equation*}
        \begin{split}
            |\produ{\nabla P}{\dv}|&\leq\abs{\produ{\nabla P}{\partial_t v}}+\abs{\produ{\nabla P}{\matt{v}}}
            = \Big|\int_{\R^2} P \pr_i(u_j\pr_jv_i) dx\Big|= \Big|\int_{\R^2} P \pr_iu_j\pr_jv_i dx\Big|,
        \end{split}
    \end{equation*}
    where we have used the incompressibility condition and integrated by parts. We can apply the Div-Curl Lemma (Theorem II.1 in \cite{Coifman93}), as $\div{\pr_i u}=0$ and $\curl (\nabla v_i)=0$, and Sobolev embeddings to obtain
\begin{equation}\label{divcurl}
    \Big|\int_{\R^2} P \pr_iu_j\pr_jv_i dx\Big| \leq C \norm{P}_{BMO} \norm{\pr_iu_j\pr_jv_i}_{\HH^1}\leq C \norm{\nabla P}\norm{\nabla u}\norm{\nabla v}.
\end{equation}

Next, using $\div{v}=0$ again, integration by parts and the Hölder's inequality in the first term of the right-hand side of \eqref{2.2} it is possible to get
\begin{equation*}
    \begin{split}
    \produ{\De v}{\dv}&=\produ{\De v}{\partial_t v}+\produ{\De v}{\matt{v}} = -\frac{1}{2}\frac{d}{dt}\norm{\nabla v}^2 -\int_{\R^2} \partial_j v_j \partial_j u_k \partial_k v_j dx - \int_{\R^2} \partial_j v_j  u_k \partial_k \partial_j v_j dx\\
    &\leq -\frac{1}{2}\frac{d}{dt}\norm{\nabla v}^2 + C \norm{\nabla u}\norm{\nabla v}^{2}_{L^4} \leq -\frac{1}{2}\frac{d}{dt}\norm{\nabla v}^2 + C \norm{\nabla u}\norm{\nabla v}\norm{\nabla^2 v}.
    \end{split}
\end{equation*}
Joining the above bounds in \eqref{2.2} we get
\begin{equation*}
\begin{split}
    \norm{\srho \dv}^2+\frac{1}{2}\frac{d}{dt}\norm{\nabla v}^2 &\leq C\norm{\nabla u}\norm{\nabla v}\norm{\nabla^2 v} + C\norm{\nabla P}\norm{\nabla u}\norm{\nabla v}\\
    \leq \frac{1}{2}&\parent{\norm{\nabla^2 v}^2+\norm{\nabla P}^2}+C\norm{\nabla u}^2\norm{\nabla v}^2
    \leq \frac{1}{2}\norm{\srho \dv}^2+C\norm{\nabla u}^2\norm{\nabla v}^2,
    \end{split}
\end{equation*}
where we also used the Stokes equation and Young inequality. 
The above inequality can be rewritten as
\begin{equation}\label{in2}
    \norm{\srho \dv}^2+\frac{d}{dt}\norm{\nabla v}^2 \leq C \norm{\nabla u}^2\norm{\nabla v}^2.
\end{equation}

Introducing a time weight it is possible to get 
\begin{equation*}
    t\norm{\srho \dv}^2+\frac{d}{dt}(t\norm{\nabla v}^2) \leq C t \norm{\nabla u}^2\norm{\nabla v}^2 +\norm{\nabla v}^2.
\end{equation*}
Using Gr\"onwall's Lemma and \eqref{energy_ineq} for $v$ gives
\begin{equation}\label{4.5}
\begin{split}
    \int_{0}^{t}\tau\norm{\srho \dv}^2d\tau+ &t\norm{\nabla v}^2 \leq C \exp{\Big(\int_{0}^{t}\norm{\nabla u}^2d\tau}\Big)\int_{0}^{t}\norm{\nabla v}^2d\tau \leq C \norm{\rho_0 v_0}^2\leq C \norm{v_0}^2.
\end{split}
\end{equation}
Supposing that $v_0\in \dot{H}^1(\R^2)$ in \eqref{in2} we can apply Gr\"onwall's Lemma without introducing time weights to obtain
\begin{equation}\label{4.6}
    \int_{0}^{t}\norm{\srho \dv}^2 d\tau+ \norm{\nabla v}^2 \leq C \exp{\Big(\int_{0}^{t}\norm{\nabla u}^2 d\tau}\Big) \norm{\nabla v_0}^2\leq C \norm{\nabla v_0}^2.
\end{equation}
Interpolation between the two inequalities is carried out using an argument similar to \cite{Paicu13,Gancedo18}. We define the linear operator $L v_0=\nabla v$. The inequalities \eqref{4.5} and \eqref{4.6} give us
\begin{equation*}
   \begin{array}{lr}
        \norm{Lv_0}\leq C \norm{v_0}_{\dot{H}^1},  &
        \norm{Lv_0}\leq C t^{-1/2} \norm{v_0}.
   \end{array} 
\end{equation*}
Using Proposition \ref{thfrances}, we obtain
\begin{equation*}
    \norm{Lv_0}\leq C t^{-\frac{1-(\gamma+s)}{2}} \norm{v_0}_{\Hgam}.
\end{equation*}
Similarly, we can define the linear operator $Lv_0=\srho \dv$. It follows from \eqref{4.5} and \eqref{4.6} that
\begin{equation*}
    \begin{array}{lr}
          \displaystyle \int_{0}^{t}\norm{L v_0}^2 d\tau  \leq C \norm{v_0}^2_{\dot{H}^1}, &
        \displaystyle \int_{0}^{t}\tau\norm{L v_0}^2 d\tau \leq C  \norm{v_0}^2.
    \end{array}
\end{equation*}
Hence, it satisfies
\begin{equation*}
    \begin{array}{lr}
         \apli{L}{\Lp{2}}{L^2\parent{\parent{\corch{0,T},tdt};\Lp{2}}},&
        \apli{L}{\dot{H}^1\parent{\R^2}}
        {L^2\parent{\parent{\corch{0,T},dt};\Lp{2}}}.
    \end{array}
\end{equation*}
Applying Proposition \ref{thfrances} we obtain
$$(\Lp{2},\dot{H}^{1}(\R^2))_{\th,2}=\Hgam.$$
Also, since both measures are absolutely continuous with respect to the Lebesgue measure in $[0,T]$, it follows by Proposition \ref{Stein-Weiss} that
$$\parent{L^2\parent{\parent{\corch{0,T},tdt};\Lp{2}},L^2\parent{\parent{\corch{0,T},dt};\Lp{2}}}_{\th,2}=L^2((\corch{0,T},t^{1-(\ga+s)}dt);\Lp{2}).$$
Therefore, we can conclude  that 
$$
\apli{L}{\Hgam}
        {L^2((\corch{0,T},t^{1-(\gamma+s)}dt};\Lp{2})).
$$
Putting together both interpolations and making $v=u$, we get the following inequality, 
\begin{equation}\label{Lema3.3}
\begin{split}
    \int_{0}^{t}\tau^{1-(\gamma+s)}\norm{\srho \du}^2 d\tau+ t^{1-(\ga+s)}\norm{\nabla u}^2 &\leq C\norm{u_0}^2_{\dot{H}^{\ga+s}}.
\end{split}
\end{equation}

\medskip
\textbf{Step 3: } $t^{\frac{2-(\ga+s)}{2}}\sqrt{\rho}D_t u\in L^\infty_T L^2$ and $t^{\frac{2-(\ga+s)}{2}}\nabla \du \in L^2_TL^{2}$.

\medskip
  We take $D_t$ in the momentum equation of \eqref{VJ},
    $$
    \rho(\mat{\du}) =\Delta \du + \partial_k(\partial_k u \c \nabla u)-\partial_k u \cdot \nabla(\partial_k u)-D_t \nabla P.
    $$

    Testing the above equation with $\du$, integration by parts and taking into account that $\div{u}=0$, it follows that
    \begin{equation}\label{est3}
        \begin{split}
            \frac{1}{2}\frac{d}{dt}\norm{\srho \du}^2+\norm{\nabla \du}^2
            =&-\int_{\R^2}(\partial_k u \cdot  \nabla u )\cdot \partial_k \du dx +\int_{\R^2}\partial_k u \cdot (\partial_k u \c \nabla\du) dx \\
            & -\int_{\R^2}(\nabla \partial_tP+u\c\nabla(\nabla P))\c\du dx =J_1+J_2+J_3.
        \end{split}
    \end{equation}

To estimate $J_1$ and $J_2$, we combine Hölder's inequality, the Gagliardo-Nirenberg inequality, Young inequality, and properties of the Stokes operator to obtain the following
\begin{equation*}
    J_1+J_2\leq C \norm{\nabla \du}\norm{\nabla u}_{L^4}^2 \leq C \norm{\nabla \du}\norm{\nabla u}\norm{\nabla^2u}\leq \frac{1}{8} \norm{\nabla \du}^2 + C\norm{\srho\du}^2\norm{\nabla u}^2.
\end{equation*}

On the other hand, integrating by parts, $J_3$ can be rewritten as
\begin{equation}\label{J_3}
    \begin{split}
        J_3&= \int_{\R^2}\partial_tP \nabla \c {\du} dx + \int_{\R^2}(\matt{P})\div{\du} dx +\int_{\R^2}\partial_i u_j \partial_j P \du_i dx.
    \end{split}
\end{equation}
Now, as $\div{u}=0$ and $\div{\du}=\partial_ju_i\partial_iu_j$, we have
$$
J_3=\int_{\R^2}(\partial_tP+\matt{P})\partial_iu_j\partial_ju_i dx-\int_{\R^2}P\partial_iu_j\partial_j\du_i dx =J_{3,1}-J_{3,2}.
$$

We can rewrite $J_{3,1}$ as follows,
\begin{equation*}
    J_{3,1}=\frac{d}{dt}\int_{\R^2}P\partial_iu_j\partial_ju_i dx-\int_{\R^2}PD_t(\partial_iu_j\partial_ju_i)dx = \frac{d}{dt}\int_{\R^2}P\partial_iu_j\partial_ju_i dx - 2J_{3,2}.
\end{equation*}

where we have used the fact that if $A$ is a $2\times 2$ matrix with $\tr(A)=0$ then $\tr(A^3)=0$, i.e $\partial_iu_k\partial_ku_j\partial_ju_i=0$.

Applying the same reasoning used in \eqref{divcurl}, we derive the following bound for $J_{3,2}$
\begin{equation*}
    J_{3,2}\leq C \norm{P}_{BMO}\norm{\partial_i u_j \partial_j \du_i}_{\HH^1}\leq C \norm{\nabla P}\norm{\nabla u}\norm{\nabla \du} \leq \frac{1}{8}\norm{\nabla \du}^2 + C \norm{\srho \du}^2\norm{\nabla u}^2.
\end{equation*}
Combining the bounds for $J_1$, $J_2$ and $J_3$ in \eqref{est3}, we arrive at
\begin{equation*}
    \begin{split}
        \frac{1}{2}\frac{d}{dt}\norm{\srho \du}^2+\norm{\nabla \du}^2&\leq\frac{1}{2} \norm{\nabla \du}^2 + \frac{d}{dt}\int_{\R^2}P\partial_iu_j\partial_ju_i dx + C \norm{\srho \du}^2\norm{\nabla u}^2,
    \end{split}
\end{equation*}
which can be rewritten as
\begin{equation}\label{est3ph}
    \frac{d}{dt}\Big(\frac{1}{2}\norm{\srho \du}^2-\int_{\R^2}P\partial_iu_j\partial_ju_i dx\Big)+\frac{1}{2}\norm{\nabla \du}^2\leq C \norm{\srho \du}^2\norm{\nabla u}^2.
\end{equation}

Denote $\displaystyle \ph(t)=\frac{1}{2}\norm{\srho \du}^2-\int_{\R^2}P\partial_iu_j\partial_ju_i dx$. We derive both upper and lower bounds for this function. Proceeding as in \eqref{divcurl}, we obtain
\begin{equation*}
    \Big|\int_{\R^2} P \pr_iu_j\pr_ju_i dx\Big| \leq \norm{\nabla P}\norm{\nabla u}^2\leq \frac{1}{4}\norm{\srho\du}^2+C\norm{\nabla u}^4.
\end{equation*}
Then, the function $\ph(t)$ can be bounded as follows
\begin{equation}\label{ULBound}
    \frac{1}{4}\norm{\srho\du}^2-C\norm{\nabla u}^4 \leq \ph(t) \leq \frac{3}{4}\norm{\srho\du}^2+C\norm{\nabla u}^4.
\end{equation}

Introducing the time weight $t^{2-(\ga+s)}$ in \eqref{est3ph} and using the upper bound of \eqref{ULBound} yields
\begin{equation*}
\begin{split}
    \frac{d}{dt}&(t^{2-(\ga+s)}\ph(t))+\frac{t^{2-(\ga+s)}}{2}\norm{\nabla \du}^2\leq C t^{2-(\ga+s)}\norm{\srho \du}^2\norm{\nabla u}^2+Ct^{1-(\ga+s)}\ph(t) 
    \\
    &\leq C t^{2-(\ga+s)}\norm{\nabla u}^2\ph(t)+C t^{2-(\ga+s)}\norm{\nabla u}^6+ C t^{1-(\ga+s)}\norm{\srho\du}^2+C t^{1-(\ga+s)}\norm{\nabla u}^4.
\end{split}
\end{equation*}

Gr\"onwall's Lemma, \eqref{energy_ineq} and  \eqref{Lema3.3} gives
\begin{equation*}
\begin{split}
    &t^{2-(\ga+s)}\ph(t)+\frac{1}{2}\int_{0}^{t}\tau^{2-(\ga+s)}\norm{\nabla \du}^2 d\tau\\
    &\leq C \exp{\Big(\int_0^t\norm{\nabla u}^2 d\tau}\Big)
    \corch{\int_0^t \tau^{2-(\ga+s)}\norm{\nabla u}^6 dt+\int_0^t \tau^{1-(\ga+s)}(\norm{\srho\du}^2 + \norm{\nabla u}^4)} d\tau \leq C.
    \end{split}
\end{equation*}

To conclude, we use the lower bound of \eqref{ULBound} to obtain
\begin{equation}\label{Lema3.4}
    t^{2-(\ga+s)}\norm{\srho\du}^2+\int_{0}^{t}\tau^{2-(\ga+s)}\norm{\nabla \du}^2 d\tau \leq C(\norm{\rho_0 u_0},\norm{u_0}_{\dot{H}^{\ga+s}},T) .
\end{equation}

\medskip
\textbf{Step 4: } $\nabla u \in L^1_T L^\infty$.

\medskip
In order to control the velocity gradient, we do need the structure of the density. We first introduce a lemma that allows us to bound $\nabla^2 u$.
\begin{lemma}\label{Lemma3.8}
    If $\rho_0$ satisfies \eqref{H1} or \eqref{H2} then for all $m\in[2,\infty)$, $T>0$, it holds that
$$t^{\frac{2-(\ga+s)}{2}}\nabla^2u\in L^2(0,T;\Lp{m}).$$
\end{lemma}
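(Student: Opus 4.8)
The plan is to read the momentum equation of \eqref{VJ} as a stationary Stokes system for $u$ at each fixed time, with right-hand side $\rho\du$. Since $\div u=0$, taking the divergence gives $\Delta P=\div(\rho\du)$, so that $\nabla P$, and then $\nabla^2u=\nabla^2(-\Delta)^{-1}(\rho\du-\nabla P)$, are obtained from $\rho\du$ by composition of Riesz transforms; the Calderón--Zygmund theorem then yields, for every $m\in(1,\infty)$,
\begin{equation*}
    \norm{\nabla^2u}_{\Lp{m}}\leq C\norm{\rho\du}_{\Lp{m}}.
\end{equation*}
Because $0\leq\rho\leq\rho^M$ we may write $\rho\abs{\du}=\srho\,(\srho\abs{\du})\leq\sqrt{\rho^M}\,\srho\abs{\du}$, hence $\norm{\rho\du}_{\Lp{m}}\leq\sqrt{\rho^M}\norm{\srho\du}_{\Lp{m}}$, and the problem reduces to bounding $t^{\frac{2-(\ga+s)}{2}}\norm{\srho\du}_{\Lp{m}}$ in $L^2_T$.

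To upgrade the $L^2$-in-space control of $\srho\du$ provided by Steps 2--3 to an $L^m$ bound, we invoke the density structure. By Step 3 one has $\nabla\du(t)\in\Lp{2}$ for a.e.\ $t\in(0,T)$, so $\du(t)\in\dot{H}^1(\R^2)$, and Lemma \ref{Lemma3.7}---the only place where hypothesis \eqref{H1} or \eqref{H2} is used---applied with $f=\du$ and the fixed exponent $m$ gives, with a constant uniform on $[0,T]$,
\begin{equation*}
    \norm{\srho\du}_{\Lp{m}}\leq C\parent{\norm{\srho\du}+\norm{\nabla\du}}.
\end{equation*}
Multiplying by $t^{\frac{2-(\ga+s)}{2}}$, squaring and integrating on $[0,T]$: the contribution of the first term is controlled using $t^{\frac{2-(\ga+s)}{2}}\srho\du\in L^\infty_TL^2$ (Step 3, \eqref{Lema3.4}), which makes $t^{2-(\ga+s)}\norm{\srho\du}^2$ bounded, hence integrable on $[0,T]$; the contribution of the second term is precisely $\int_0^T\tau^{2-(\ga+s)}\norm{\nabla\du}^2d\tau<\infty$, again from \eqref{Lema3.4}. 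Therefore $t^{\frac{2-(\ga+s)}{2}}\srho\du\in L^2_T\Lp{m}$, so $t^{\frac{2-(\ga+s)}{2}}\rho\du\in L^2_T\Lp{m}$, and the Stokes estimate above yields $t^{\frac{2-(\ga+s)}{2}}\nabla^2u\in L^2_T\Lp{m}$, with a constant depending on $m$, $T$ and the data norms entering Lemma \ref{Lemma3.7} and \eqref{Lema3.4}.

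I do not expect a serious obstacle here: the argument only combines the elliptic (Stokes) regularity of the stationary equation satisfied by $u$ at each time, the pointwise bound $\rho\leq\rho^M$, and the weighted $L^2$ estimates already established in Steps 2--3. The one point that genuinely requires the structural hypotheses is the passage from an $L^2$ to an $L^m$ bound for $\srho\du$: this is not available by interpolation alone and relies on Lemma \ref{Lemma3.7} (equivalently Lemma \ref{Lemma3.6} in localized form), and it requires checking that $\du(t)\in\dot{H}^1(\R^2)$ for almost every $t$, which is exactly the output of Step 3, together with the finiteness of the weighted norms near $t=0$, which follows from the $L^\infty_TL^2$ bound carrying the weight $t^{\frac{2-(\ga+s)}{2}}$.
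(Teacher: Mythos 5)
Your proposal is correct and follows essentially the same route as the paper: the Stokes/Calder\'on--Zygmund estimate $\norm{\nabla^2u}_{L^m}\lesssim\norm{\rho\du}_{L^m}\lesssim\norm{\srho\du}_{L^m}$, then Lemma \ref{Lemma3.7} applied to $f=\du$ (valid since $\du(t)\in\dot{H}^1$ by \eqref{Lema3.4}), and finally the weighted bounds of Step 3 to control both $\norm{t^{\frac{2-(\ga+s)}{2}}\srho\du}_{L^2_TL^2}$ and $\norm{t^{\frac{2-(\ga+s)}{2}}\nabla\du}_{L^2_TL^2}$. You merely spell out the Riesz-transform derivation of the Stokes estimate and the pointwise bound $\rho\le\rho^M$, which the paper leaves implicit.
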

\begin{proof} 
Applying Stokes, it follows that
    \begin{equation*}
    \begin{split}
        \norm{t^{\frac{2-(\ga+s)}{2}}\nabla^2u}&_{L^2_TL^m}  \leq \norm{t^{\frac{2-(\ga+s)}{2}}\srho \du}_{L^2_TL^m}
        \\
        & \leq C \parent{\norm{t^{\frac{2-(\ga+s)}{2}}\srho \du}_{L^2_TL^2}+\norm{t^{\frac{2-(\ga+s)}{2}}\nabla \du}_{L^2_TL^2}}\leq C(\norm{\rho_0 u_0},\norm{u_0}_{\dot{H}^{\ga+s}},T),
        \end{split}
    \end{equation*}
    where Lemma \ref{Lemma3.7} has also been applied, noting that $\du\in\dot{H}^1$ as established in \eqref{Lema3.4}.
\end{proof}

Now we can prove that $\nabla u \in L^1_T L^\infty$. The $L^\infty$-Gagliardo–Nirenberg inequality (2.3.50 in \cite{Mazya2011}) with $\th=(2(1-1/m))^{-1}\in(0,1)$ and $m>2$, yields
    \begin{equation*}
        \int_0^T \norm{\nabla u}_{L^\infty} d\tau \leq C \int_0^T \norm{\nabla u}^{1-\th}\norm{\nabla^2 u}^\th_{L^m} dt.
    \end{equation*}
Applying \eqref{Lema3.3}, Hölder's inequality for $p=2/\th$ and Lemma \ref{Lemma3.8}, it follows that
\begin{equation*}
    \begin{split}
        \int_0^T \norm{\nabla u}_{L^\infty} dt &  \leq C \int_0^T t^{-\frac{(1-(\ga+s))(1-\th)}{2}}\norm{\nabla^2 u}^\th_{L^m} dt
        \\
        &\leq C \Big(\int_0^T t^{-\frac{1+\th-(\ga+s)}{2-\th}}dt\Big)^{\frac{2-\th}{2}}\leq C(\norm{\rho_0 u_0},\norm{u_0}_{\dot{H}^{\ga+s}},T),
    \end{split}
\end{equation*}
which is bounded if $m>1+(\ga+s)^{-1}$.

\medskip
\textbf{Step 5: }$\nabla u \in L^1_T \Cga$.

\medskip
For $r = \ga+2/p$, $p<\infty$, the embedding $ \dot{W}^{r,p}(\R^2) \hookrightarrow \Cga(\R^2)$ holds,  from which we deduce the following,
\begin{equation*}
        \int_0^T \norm{\nabla u}_{\Cga}dt\leq C \int_0^T \norm{\nabla u}_{\dot{W}^{r,p}}dt \leq C \int_0^T \norm{\nabla u}^{1-\th}\norm{\nabla^2 u}^\th_{L^m}dt,
\end{equation*}
  where the Gagliardo-Nirenberg inequality (see \cite{Mazya2011})  is used in the second inequality for $m>p>2/(1-\gamma)$, with $\th=\frac{1+\ga}{2(1-1/m)}\in(0,1)$.
  As before, the use of \eqref{Lema3.3}, Hölder's inequality with exponent $2/\th$, and Lemma \ref{Lemma3.8} yields
    \begin{equation*}
        \int_0^T \norm{\nabla u}_{\Cga} dt \leq C \Big(\int_0^T t^{-\frac{1+\th-(\ga+s)}{2-\th}}dt\Big)^{\frac{2-\th}{2}}\leq C(\norm{\rho_0 u_0},\norm{u_0}_{\dot{H}^{\ga+s}},T),
    \end{equation*}
  which is bounded if $m>1+(1+\ga)/s$. 
  
\medskip
\textbf{Step 6:} We consider the smoothed-out approximate data with no vacuum to take advantage of the classical results. We define
\begin{equation*}
    u_0^\ep\in C^\infty(\R^2) \text{ with } \div{u_0^\ep}=0 \text{ and } \rho_0^\ep\in C^\infty(\R^2) \text{ with } 0<\ep \leq \rho_0^\ep \leq C_0
\end{equation*}
such that 
\begin{equation*}
             u_0^\ep \to u_0 \text{ in } \dot{H}^{\ga+s},\quad
         \rho_0^\ep \to \rho_0 \text{ in } L^\infty\text{-weak}^*,\quad
         \rho_0^\ep \to \rho_0 \text{ in } L^p_{\mathrm{loc}} \text{ if } p<\infty.
\end{equation*}

Since $\rho_0\not \equiv 0$, there exist $R_0\in(0,\infty)$ and $c_0>0$ such that $\int_{B(0,R_0)}\rho_0dx>c_0>0$. It follows that, for $\ep$ sufficiently small, $\int_{B(0,R_0)}\rho_0^\ep dx>c_0>0$ . In other words, \eqref{lowrho} holds with $x_0=0$ and $\rho_0$ replaced by $\rho_0^\ep$. Moreover, we have the bound $\norm{ \sqrt{\rho_0^\ep} u_0^\ep}\leq\norm{\sqrt{\rho_0}u_0}$. The classical theory for strong solutions to system \eqref{VJ} guarantees the existence of a unique global smooth solution $\parent{\rho^\ep, u^\ep}$ corresponding to the initial data $\parent{\rho_0^\ep,u_0^\ep}$. This solution satisfies the a priori estimates \eqref{energy_ineq}, \eqref{Lema3.3} and \eqref{Lema3.4}.

We aim to establish the following bound
\begin{equation*}
    \norm{u^\ep}^2_{H^{\frac{1}{2}-\al}L^2(B_R)}\leq \norm{u^\ep}^2_{L^2_T L^2(B_R)} + C \norm{t^{1-\frac{\ga+s}{2}}\partial_t u^\ep}^2_{L^2_T L^2(B_R)},
\end{equation*}
in other words, to have Lemma \ref{ThDanchin} for $f=u^\ep$. We proceed by obtaining a bound for $\partial_t u^\ep$.

Let $R\geq R_0+2T\norm{\sqrt{\rho_0^\ep}u_0^\ep}/\sqrt{c_0}$, by Lemma \ref{lemma3.5} we have $\int_{B(0,R)}\rho^\ep(x,t)dt\geq c_0/4$ and we can apply Lemma \ref{Lemma3.6} to $B_R=B(0,R)$. Now we can prove the bound for $\partial_t u^\ep$ in $B_R$.
Rewriting $\partial_t u^\ep =\du^\ep-u^\ep\cdot\nabla u^\ep$, it follows
\begin{equation*}
    \begin{split}
        \int_0^T& t^{2-(\ga+s)}\norm{\partial_t u^\ep}^2_{L^2(B_R)} dt  \leq \int_0^T t^{2-(\ga+s)}\norm{\du^\ep}_{L^2(B_R)}^2dt+\int_0^Tt^{2-(\ga+s)}\norm{u^\ep \c \nabla u^\ep}_{L^2(B_R)}^2 dt
        \\
        & \leq C(R) \int_0^T t^{2-(\ga+s)}\parent{\norm{\sqrt{\rho^\ep} \du^\ep}^2 + \norm{\nabla \du^\ep}^2} dt+ \int_0^T t^{2-(\ga+s)} \norm{u^\ep \c \nabla u^\ep}_{L^2(B_R)}^2 dt \\
        & \leq C(R) +C(R)\int_0^T t^{2-(\ga+s)} \norm{u^\ep}_{L^4(B_R)}^2\norm{\nabla u^\ep}_{L^4(B_R)}^2 dt.
\end{split}
\end{equation*}
where we have used Lemma \ref{Lemma3.6} in the second inequality and the a priori estimates for $u^\ep$ and Hölder's inequality in the third one.

Applying Gagliardo-Nirenberg inequality and Lemma \ref{Lemma3.6} it follows  
\begin{equation*}
    \begin{split}
        \int_0^Tt^{2-(\ga+s)}&\norm{\partial_tu^\ep}^2_{L^2(B_R)} dt \leq C(R)\Big(1+ \int_0^T t^{2-(\ga+s)} \norm{u^\ep}_{L^2(B_R)}\norm{\nabla u^\ep}^2\norm{\nabla^2 u^\ep} dt\Big)
        \\
        &\leq C(R)\Big(1+ \int_0^T t \norm{\rho^\ep u^\ep}\norm{\nabla^2 u^\ep} dt +\int_0^Tt\norm{\nabla u^\ep}\norm{\nabla^2 u^\ep}dt\Big)
        \\
        &\leq C(R)\Big(1+ \int_0^T t^{\frac{\ga+s}{2}}t^{\frac{2-(\ga+s)}{2}}\norm{\nabla^2 u^\ep}dt+\int_0^T t^{-\frac{1}{2}+(\ga+s)}t^{\frac{2-(\ga+s)}{2}}\norm{\nabla^2 u^\ep}dt\Big)
        \leq C(R),
    \end{split} 
\end{equation*}
where in the last steps we  used the a priori estimates for $u^\ep$. Then, we can conclude that for all $R \geq R_0 + 2 T \norm{\sqrt{\rho_0^\ep}u_0^\ep}/\sqrt{c_0}$,
$$
\int_0^T t^{2-(\ga+s)}\norm{\partial_t u^\ep}^2_{L^2(B_R)} dt  \leq C(R).
$$
However, the left-hand side is increasing in $R$ so we have that, for all $R>0$,
$$
\int_0^T t^{2-(\ga+s)}\norm{\partial_t u^\ep}^2_{L^2(B_R)} dt  \leq C(R).
$$
Therefore, by Lemma \ref{ThDanchin}, for all $\al \in (\frac{1-(\ga+s)}{2},\frac{1}{2})$
\begin{equation*}
    \norm{u^\ep}^2_{H^{\frac{1}{2}-\al}L^2(B_R)}\leq \norm{u^\ep}^2_{L^2_T L^2(B_R)} + C \norm{t^{1-\frac{\ga+s}{2}}\partial_t u^\ep}^2_{L^2_T L^2(B_R)}.
\end{equation*}

The a priori estimates yields $\norm{\nabla u^\ep}_{L^2_T L^2(B_R)}\leq C$ from which we deduce that $\norm{u^\ep}_{L^2_T H^1(B_R)}\leq C(R)$. 
By interpolation, it follows that $\norm{u^\ep}_{H^{\frac{1}{2}-\al}([0,T]\times B_R)}\leq C(R)$. Therefore, there exists a subsequence of $(u^\ep)$ (which we denote again by $(u^\ep)$ for simplicity ) and a limit function $u\in L^2_{\mathrm{loc}}([0,T]\times \R^2)$ such that
$$
u^\ep\to u \text{ in } L^2_{\mathrm{loc}}([0,T]\times \R^2).
$$
Moreover, we have that
$$
\nabla u^\ep  \rightharpoonup \nabla u \text{ in } L^2_{\mathrm{loc}}([0,T]\times \R^2).
$$
Theorem 2.5 in \cite{Lions} yields, for any $p\in[1,\infty)$,
$$
\rho^\ep\to\rho \text{ in } C([0,T]; L^p(B_R)).
$$

It follows that the equation holds in the strong sense in $L^2_{\mathrm{loc}}([0,T]\times \R^2)$ and that the limit solution $(\rho,u)$ satisfies the same a priori estimates.

\subsection{Uniqueness}
Assume that $(\rho,u)$ is a weak solution of \eqref{VJ}  and $(\bar{\rho},\bar{u})$ a strong one with the same initial data $(\rho_0,u_0)$. Let us denote $P$, $\bar{P}$ the corresponding pressures. We focus on $T\in(0,1/2)$ sufficiently small. We define $\delta \rho = \rho - \bar{\rho}$, $\delta u=u-\bar{u}$ and $\delta P = P-\bar{P}$  such that they satisfy the following system.

 \begin{equation}\label{UNI}
    \left\{
    \begin{aligned}
       \partial_t \delta\rho+\bar{u}\cdot \nabla \delta \rho + \delta u \cdot \nabla \rho&=0, \\
       \rho\parent{\mat{\delta u}}-\De \delta u + \nabla\delta P&=-\delta\rho D_t \bar{u}-\rho \delta u \cdot \nabla \bar{u}, \\
       \nabla \c\delta u &= 0, \\
       \parent{\delta\rho,\delta u}|_{t=0}&=\parent{0,0},
 \end{aligned}
\right.
\end{equation}

where $D_t \bar{u}=\partial_t \bar{u}+\bar{u}\c \nabla \bar{u}$.

We set $\phi := (-\Delta)^{-1}\delta \rho$ so that $\norm{\nabla \phi}=\norm{\delta \rho}_{\dot{H}^{-1}}$. 
Testing the equation of $\delta \rho$ with $\phi$, integrating by parts and using the divergence free condition for $\bar{u}$, we get
\begin{equation*}
    \frac{d}{dt}\norm{\nabla \phi}^2\leq C \norm{\nabla \bar{u}}_{L^\infty}\norm{\nabla \phi}^2 + C \norm{\rho \delta u}\norm{\nabla \phi}.
\end{equation*}
Applying Gr\"onwall's Lemma and Hölder's inequality it follows that 
\begin{equation}\label{preD}
    \norm{\delta \rho(t)}_{\dot{H}^{-1}}\leq C\exp{\Big(\int_0^T\norm{\nabla \bar{u} }_{L^\infty} dt\Big)}\norm{\srho \delta u}_{L^1_t L^2}\leq Ct^{1/2} \norm{\srho \delta u}_{L^2_t L^2} .
\end{equation}

In order to control $\srho \delta u$ in $L^2_t L^2$ we define the following linear backward parabolic system 
\begin{equation}\label{duality}
\left\{
    \begin{aligned}
       \rho\parent{\mat{v}}+\De v + \nabla  Q &=\rho \delta u, \\
       \nabla \c v &= 0, \\
       v|_{t=T}&=0.
 \end{aligned}
\right.
\end{equation}
By testing the momentum equation with \( v \) and \( D_t v \), and applying estimates similar to those used for the derivation of the a priori estimates, we are able to obtain the following
\begin{equation}\label{boundsv}
    \norm{\srho v, \nabla v}^2_{L^\infty_T L^2}+\int_0^T \parent{\norm{\nabla v}^2 + \norm{\srho D_t v, \nabla Q, \nabla^2 v}^2}dt \leq C \norm{\srho \delta u}_{L^2_T L^2}^2.
\end{equation}

Testing the momentum equation of $\delta u$ with $v$ yields
\begin{equation}\label{cota}
     \norm{\srho \delta u}_{L^2_T L^2}^2 \leq \int_0^T \abs{\produ{\delta \rho D_t \bar{u}}{v}}dt + \int_0^T\abs{\produ{\rho \delta u \c \nabla \bar{u}}{v}}dt=J_4+J_5.
\end{equation}

Hölder's inequality with $p=2+\varepsilon$ and $p'=\frac{2(2+\varepsilon)}{\varepsilon}$, $\varepsilon>0$, Lemma \ref{Lemma3.7}, \eqref{boundsv} and Gagliardo-Nirenberg inequality gives
\begin{equation*}
\begin{split}
    J_5 &\leq C \int_0^T \norm{\srho \delta u} \norm{\srho v}_{L^{p'}}\norm{\nabla \bar{u}}_{L^p}dt
    \leq C\int_0^T \norm{\srho \delta u} \parent{\norm{\srho v}+\norm{\nabla v}}\norm{\nabla \bar{u}}^{1-\th}\norm{\nabla^2 \bar{u}}^\th dt
    \\
    &\leq C\norm{\srho \delta u}_{L^2_T L^2}^2 \Big(\int_0^T \norm{\nabla \bar{u}}^{2(1-\th)}\norm{\nabla^2 \bar{u}}^{2\th}dt\Big)^{1/2} \leq C(T) \norm{\srho \delta u}_{L^2_T L^2}^2,
\end{split}
\end{equation*}

where $\th=\frac{\varepsilon}{2+\varepsilon}$ and we chose $\varepsilon<\frac{2(\ga+s)}{1-(\ga+s)}$. Adjusting $T$, $C(T)$ can be small enough and \eqref{cota} becomes
\begin{equation}\label{cota2}
    \norm{\srho \delta u}_{L^2_T L^2}^2 \leq C \int_0^T \abs{\produ{\delta \rho D_t \bar{u}}{v}}dt = C J_4.
\end{equation}
We can take $\ep=\norm{\delta \rho}_{\dot{H}^{-1}}<1/2$ small by \eqref{preD} in Lemma \ref{tri1} and Lemma \ref{tri2} and use Lemma \ref{Lemma3.7} with \eqref{boundsv} to obtain
\begin{equation*}
\begin{split}
    J_4 &\leq C \int_0^T \norm{\delta \rho}_{\dot{H}^{-1}} \abs{\ln{(\norm{\delta \rho}_{\dot{H}^{-1}})}}^{1/2} \parent{\norm{\srho D_t \bar{u}}+\norm{\nabla D_t \bar{u}}}\parent{\norm{\srho v}+\norm{\nabla v}}dt
    \\
    & \leq C \norm{\srho \delta u}_{L^2_T L^2} \int_0^T \norm{\delta \rho}_{\dot{H}^{-1}} \abs{\ln{(\norm{\delta \rho}_{\dot{H}^{-1}})}}^{1/2} \parent{\norm{\srho D_t \bar{u}}+\norm{\nabla D_t \bar{u}}}dt.
\end{split}
\end{equation*}

Introducing the above estimate in \eqref{cota2} and using the bound \eqref{preD}
\begin{equation*}
    \norm{\srho \delta u}_{L^2_T L^2} \leq C \int_0^T \norm{\srho \delta u}_{L^2_t L^2} \abs{\ln{(t^{1/2}\norm{\srho \delta u}_{L^2_t L^2})}}^{1/2} t^{1/2}\parent{\norm{\srho D_t \bar{u}}+\norm{\nabla D_t \bar{u}}}dt.
\end{equation*}
Note that we can take $t$ such that $\norm{\srho \delta u}_{L^2_tL^2}\in(0,1/2)$ which implies
\begin{equation*}
    \norm{\srho \delta u}_{L^2_T L^2} \leq C \int_0^T \norm{\srho \delta u}_{L^2_t L^2}\abs{\ln{\parent{\norm{\srho \delta u}_{L^2_t L^2}}}}^{1/2} \abs{\ln{t}}^{1/2}t^{1/2}\parent{\norm{\srho D_t \bar{u}}+\norm{\nabla D_t \bar{u}}}dt.
\end{equation*}

We define $\al(t)=\abs{\ln{t}}^{1/2}t^{1/2}\parent{\norm{\srho D_t \bar{u}}+\norm{\nabla D_t \bar{u}}}$ which is integrable, $\al(t)\in L^1_T$. In addition, the function $\apl{r}{r\abs{\ln(r)}^{1/2}}$ is non decreasing in  $0^+$ and $\int_0^{\frac{1}{2}}r^{-1}\abs{\ln(r)}^{-1/2}dr=+\infty$. Then we can apply Osgood's Lemma (Lemma 3.4, \cite{BCD}) to conclude that $\norm{\srho \delta u}_{L^2_T L^2}=0$ on $[0,T]$. Moreover, by \eqref{preD} $\norm{\delta \rho}_{\dot{H}^{-1}}=0$ on $[0,T]$. More explicit, we have that on $[0,T]$,
\begin{equation*}
         \delta \rho \equiv 0\Rightarrow \rho=\bar{\rho}, \quad
          \srho \delta u \equiv 0 .
\end{equation*}

Testing the momentum equation in \eqref{UNI} against $\delta u$, we obtain $\int_0^T \norm{\nabla \delta u}^2 dt=0$. Therefore, by Lemma \ref{Lemma3.6}, 
 for every $R>0$,
$$
\norm{\delta u}_{L^2(B_R)}\leq C(R) (\norm{\srho \delta u}+\norm{\nabla \delta u})=0,
$$

which implies $\delta u=0 \Rightarrow u=\bar{u}$ on $[0,T]\times \R^2$.

The uniqueness on the whole time interval $[0,\infty)$ can be obtained by a bootstrap argument.

\section{Proof of Theorem \ref{th2}}\label{sect:viscosity contrast}
We next focus on incompressible viscous flows with non-constant density and non-constant viscosity on the unit torus $\mathbb{T}^2$.

\subsection{Existence:} In the first part of this work, we established the existence of global-in-time solutions in the whole space $\mathbb{R}^2$ by the combination of mollification techniques and compactness arguments. We now adapt this approach to the setting of the unit torus $\mathbb{T}^2$. As before, we smooth the initial data and derive a priori estimates for the corresponding smooth solutions. A key step is obtaining an estimate ensuring that the velocity field is spatially Lipschitz, in $L^1$-in time, which is essential for global existence. The uniformity of these bounds with respect to the mollification parameter allows us to pass to the limit via compactness. We now proceed to derive the a priori estimates in the periodic setting.

\medskip
 \textbf{Step 1: }$\srho u \in L^\infty_T L^2$ and $ \smu\D u \in L^2_T L^2$.

\medskip
    Taking the scalar product of the momentum equation in \eqref{VJ} with $u$, it is possible to obtain
    \begin{equation*}
        \produ{\rho u}{\partial_t u}+\produ{\rho \matt{u}}{u}=-\produ{\mu \D u}{\nabla u}.
    \end{equation*}
    Therefore
    \begin{equation*}
        \frac{1}{2}\frac{d}{dt}\norm{\srho u}^2=-\produ{\mu \D u}{\nabla u} = -\frac{1}{2}\norm{\smu \D u}^2.
    \end{equation*}
    After integrating in time we obtain the classical weak energy inequality \eqref{energy_ineq}.
    
\medskip
\textbf{Step 2: }  $t^{\frac{1-(\ga+s)}{2}}\sqrt{\rho}D_t u\in L^2_T L^2$ and 
        $t^{\frac{1-(\ga+s)}{2}}\nabla{u}\in L^\infty_T L^2$.

\medskip
   
We consider the linearized problem
    \begin{equation}\label{VJL}
\left\{
    \begin{aligned}
         \rho (\mat{v}) &= \nabla \c \parent{\mu \D v - \I P},   \\
         \dr&=0, \\
         \nabla \c v &=0.
    \end{aligned}
    \right.
\end{equation}
Analogously, $v$ satisfies \eqref{energy_ineq}, 
\begin{equation}
        \norm{\srho v}^2+\int_0^t\norm{\smu \D v}^2d\tau \leq \norm{\sr v_0}^2.
    \end{equation}

Testing the momentum equation of \eqref{VJL} with $\dv=\mat{v}$ and integrating by parts, it is possible to obtain
\begin{equation*}
    \norm{\srho \dv}^2=\int_{\T^2}\nabla \c (\mu \D v - \I P)\dv dx=-\int_{\T^2}(\mu \D v - \I P ) \nabla \dv dx.
\end{equation*}
    By the conmmutators, 
    $$
    \corch{D_t,\partial_j}f=-\partial_j u \c \nabla f,
    $$
    and the incompressibility condition we have
    \begin{equation*}
    \begin{split}
        \norm{\srho \dv}^2&= -\int_{\T^2}(\mu \D v - \I P ) (D_t \partial_j v_i + \partial_j u_k \partial_k v_i) dx
        \\
        & = -\int_{\T^2}\mu \D_{i,j} v D_t (\partial_j v_i)dx  -\int_{\T^2}(\mu \D v - \I P )\partial_j u_k \partial_k v_i dx.
    \end{split}
    \end{equation*}
    Since $\dm=0$, it follows that
    \begin{equation*}
        \int_{\T^2}\mu \D_{i,j} v D_t (\partial_j v_i)dx=\frac{1}{2}\frac{d}{dt}\norm{\smu \D v}^2.
    \end{equation*}
    Then, we obtain
    \begin{equation}\label{preP}
        \norm{\srho \dv}^2 + \frac{1}{2}\frac{d}{dt}\norm{\smu \D v}^2 = -\int_{\T^2}(\mu \D v - \I P )\partial_j u_k \partial_k v_i dx.
    \end{equation}
    To bound the right-hand side of the above equality we use the explicit expression of the pressure. Taking divergence in the momentum equation in \eqref{VJL} we obtain
    \begin{equation}\label{P}
          P=(-\Delta)^{-1}\nabla \c (\rho \dv) - \nabla \c \nabla (-\Delta)^{-1}(\mu \D v).
    \end{equation}

    Introducing \eqref{P} into \eqref{preP} we get
    \begin{equation}\label{4.7}
    \begin{split}
        \norm{\srho \dv}^2 + \frac{1}{2}\frac{d}{dt}\norm{\smu \D v}^2 =& - \int_{\T^2}\mu \D_{i,j} v\partial_j u_k \partial_k v_i + \int_{\T^2}(-\Delta)^{-1}\nabla \c (\rho \dv)\partial_j u_k \partial_k v_i dx
        \\
        &-\int_{\T^2}\nabla \c \nabla (-\Delta)^{-1}(\mu \D v)\partial_j u_k \partial_k v_i dx = I_1+I_2+I_3.
      \end{split}
    \end{equation}

We aim to obtain a $L^p$-estimate of $\nabla v$. By the equality
$$
\nabla \c (\mu \D v)=\bar{\mu} \Delta v + \nabla \c (\mu \D v-\bar{\mu}\D v),
$$
we can obtain 
\begin{equation}\label{expnu}
   \nabla v = \frac{1}{\bar{\mu}}\nabla \Delta^{-1}\P\nabla\c(\mu \D v)-\nabla \Delta^{-1} \P \nabla \Big[\big(\frac{\mu}{\bar{\mu}}-1\big)\D v\Big]. 
\end{equation}

In view of the smallness of $\mu$ in Theorem \ref{th2},
$$
\normp{\nabla v}{p}\leq c(\delta)\normp{\nabla \Delta^{-1}\P\nabla\c(\mu \D v)}{p}.
$$

Applying $\P$ in \eqref{VJL} we get 
\begin{equation}\label{P_rel}
    \P(\rho \dv)=\P \nabla \c (\mu \D v).
\end{equation} 

The above equality, jointly with the Gagliardo-Nirenberg inequality and the boundedness of singular integral operators (SIO) in $L^p$, $1<p<\infty$, leads to
\begin{equation}\label{grp}
    \normp{\nabla v}{p}\leq c(\delta) \norm{\smu \D v}^{2/p}\norm{\srho \dv}^{1-2/p}.
\end{equation}

In particular, for $p=4$,
\begin{equation}\label{grp4}
    \normp{\nabla v}{4}\leq c(\delta) \norm{\smu \D v}^{1/2}\norm{\srho \dv}^{1/2}.
\end{equation}

Going back to \eqref{4.7}, using Hölder inequality and \eqref{grp4} we obtain
\begin{equation}\label{I_1+I_3}
    \begin{split}
        I_1+I_3&\leq C \normp{\smu \D v}{4}\norm{\nabla u}\normp{\nabla v}{4} \leq C \norm{\nabla u}\norm{\smu \D v}\norm{\srho D_t v}\\
        &\leq \frac{1}{4}\norm{\srho D_t v}^2 + C \norm{\nabla u}^2\norm{\smu \D v}^2.
    \end{split}
\end{equation}

To bound $I_2$, we use the duality $BMO$-$\HH^1$, the div-curl lemma, Sobolev embeddings, and the boundedness in $L^p$ of SIO:
\begin{equation}\label{4.11}
    \begin{split}
        I_2&\leq C \norm{\partial_j u_k \partial_k v_i}_{\HH^1}\norm{(-\Delta)^{-1}\nabla \c (\rho \dv)}_{BMO}
        \leq C \norm{\nabla u}\norm{\nabla v}\norm{\srho \dv} \\
        &\leq \frac{1}{4} \norm{\srho \dv}^2+C \norm{\nabla u}^2\norm{\nabla v}^2.
    \end{split}
\end{equation}
Joining the bounds for $I_1$, $I_2$ and $I_3$, \eqref{4.7} becomes
\begin{equation*}
    \norm{\srho \dv}^2+\frac{d}{dt}\norm{\smu \D v}^2\leq C \norm{\nabla u}^2\norm{\smu \D v}^2.
\end{equation*}
Introducing the weight $t$ in the previous inequality gives
\begin{equation*}
    t\norm{\srho \dv}^2+\frac{d}{dt}\parent{t\norm{\smu \D v}^2}\leq C t\norm{\nabla u}^2\norm{\smu \D v}^2+\norm{\smu \D v}^2.
\end{equation*}

We now apply Gr\"onwall Lemma and \eqref{energy_ineq}, which yields
\begin{equation}\label{4.12}
    \int_0^t \tau \norm{\srho \dv}^2dt+ t \norm{\smu \D v}^2\leq C \exp{\Big(\int_0^t\norm{\nabla u}^2d\tau\Big)}\int_0^t\norm{\smu \D v}^2d\tau \leq C \norm{v_0}^2.
\end{equation}
On the other hand, if $v_0\in H^1(\T^2)$ we can apply the Gr\"onwall lemma straightforwardly in \eqref{4.11},
\begin{equation}\label{4.13} \int_0^t\norm{\srho \dv}^2d\tau+\norm{\smu \D v}^2\leq C \exp{\Big(\int_0^t\norm{\nabla u}^2d\tau}\Big)\norm{\sqrt{\mu_0}~\D v_0}^2\leq C \norm{v_0}_{H^1}^2.
\end{equation}

Interpolating between both inequalities, we obtain
\begin{equation}\label{lemma4.3}
\begin{split}
    \int_{0}^{t}\tau^{1-(\gamma+s)}\norm{\srho \du}^2d\tau+ t^{1-(\ga+s)}\norm{\smu \D u}^2 &\leq C \norm{u_0}^2_{H^{\ga+s}}.
\end{split}
\end{equation}

\medskip
\textbf{Step 3: } $t^{\frac{2-(\ga+s)}{2}}\sqrt{\rho}D_t u\in L^\infty_T L^2$ and $t^{\frac{2-(\ga+s)}{2}}\nabla \du \in L^2_TL^{2}$.

\medskip

 We apply $D_t$ to the momentum equation in \eqref{VJ} and test it against $\du$:
    \begin{equation*}
        \int_{\T^2}\du \c \rho D^2_t udx=\int_{\T^2}\du \c D_t\nabla\c(\mu \D u)dx + \int_{\T^2}\du \c D_t\nabla P.
    \end{equation*}
Using conmmutators again we have
\begin{equation}\label{4.17}
    \begin{split}
        \frac{1}{2}\frac{d}{dt}\norm{\srho \du}^2&=\int_{\T^2}D_t u_i \partial_j D_t (\mu \D_{i,j}u)dx - \int_{\T^2}D_t u_i \partial_j u_k \partial_k (\mu \D_{i,j}u)dx-\int_{\T^2}\du\c D_t \nabla P dx 
        \\
        &= I_4+I_5+I_6.
    \end{split}
\end{equation}
We start with $I_4$. Commutators and the fact that $\dm=0$ gives
\begin{equation*}
    \begin{split}
        I_4&=\int_{\T^2}D_t u_i \partial_j D_t (\mu \D_{i,j}u)dx = - \int_{\T^2}\partial_j D_t u_i D_t (\mu \D_{i,j}u)dx=-\int_{\T^2}\partial_j D_t u_i \mu (D_t\partial_j u_i + D_t \partial_i u_j)dx \\
        &=-\int_{\T^2}\partial_j D_t u_i \mu (\partial_jD_t u_i +\partial_i D_t  u_j)dx + \int_{\T^2}\partial_j D_t u_i \mu (\partial_ju_k\partial_ku_i+\partial_iu_k\partial_ku_j)dx
        \\
        &\leq -\frac{1}{2}\norm{\smu \D (\du)}^2+ C \norm{\mu \D (\du)}\normp{\nabla u}{4}^2.
    \end{split}
\end{equation*}

Now using \eqref{grp4} it follows that
\begin{equation*}
\begin{split}
        I_4&\leq -\frac{1}{2}\norm{\smu \D (\du)}^2 + C \norm{\mu \D (\du)} \norm{\srho \du} \norm{\smu \D u} 
        \\
        &\leq -\frac{9}{20}\norm{\smu \D (\du)}^2+ C \norm{\srho \du}^2 \norm{\smu \D u}^2.
\end{split}
\end{equation*}

Secondly, we bound $I_5$. Integration by parts and \eqref{grp4} give us
\begin{equation*}
    \begin{split}
        I_5& =  \int_{\T^2}\partial_kD_t u_i \partial_j u_k  \mu \D_{i,j}udx \leq C \norm{\smu \nabla \du}\normp{\nabla u}{4}\normp{\D u}{4}\\
        & \leq C \norm{\smu \nabla \du} \norm{\srho \du}\norm{\smu \D u}\leq \frac{1}{20\mu^M} \norm{\smu \nabla \du}^2 + C \norm{\srho \du}^2\norm{\smu \D u}^2.
    \end{split}
\end{equation*}

We are interested in obtaining a relationship between $\D \du$ and $\nabla \du$. 
We have the following equalities
\begin{equation*}
\begin{array}{c}
      \partial_k f_i = \partial_k \Delta^{-1} \partial_j \D_{i,j}f-\nabla \c \Delta^{-1} \partial_k\partial_i f,
      \qquad
     \nabla \c \du = \nabla u \c \nabla u.
\end{array}
\end{equation*}

Hence,
\begin{equation}\label{4.15}
    \begin{split}
        \norm{\nabla \du}^2&\leq \norm{\D (\du)}^2 + \norm{\nabla \c \du}^2\leq \norm{\D (\du)}^2+\norm{\nabla u \c \nabla u}^2 
        \\
        &\leq \norm{\D (\du)}^2+C\normp{\nabla u}{4}^4 
        \leq \norm{\D (\du)}^2 +C\norm{\smu \D u}^2\norm{\srho \du}^2.
    \end{split}
\end{equation}

Now, using \eqref{4.15} in $I_5$, 
\begin{equation*}
    \begin{split}
        I_5 \leq \frac{1}{20}\norm{\D (\du)}^2 +C\norm{\smu \D u}^2\norm{\srho \du}^2.
    \end{split}
\end{equation*}

Joining $I_4$ and $I_5$,

\begin{equation*}
    I_4+I_5 \leq -\frac{3}{10}\norm{\D (\du)}^2 + C \norm{\smu \D u}^2\norm{\srho \du}^2.
\end{equation*}

Lastly, we study $I_6$. From the analysis of \eqref{J_3} ($I_6 \sim J_3$) it is possible to obtain
\begin{equation*}
    I_6=-\int_{\T^2}\du D_t \c\nabla Pdx=\frac{d}{dt}\int_{\T^2}P\partial_i u_j \partial_j u_i dx - 3 \int_{\T^2} P \partial_j u_i \partial_i D_t u_jdx.
\end{equation*}

The pressure formula \eqref{P}, div-curl lemma and Sobolev embeddings gives us
\begin{equation*}
    \begin{split}
        \Big|\int_{\T^2} P \partial_j u_i \partial_i D_t u_jdx\Big|& \leq \Big|\int_{\T^2} \Delta^{-1}\nabla (\rho \du)\partial_j u_i \partial_i D_t u_jdx\Big| + \int_{\T^2}\abs{\nabla\c\nabla\Delta^{-1}(\mu \D u)}\abs{\partial_j u_i \partial_i D_t u_j}dx
        \\
        &\leq C \norm{\partial_j u_i \partial_i D_t u_j}_{\HH^1}\norm{\Delta^{-1}\nabla (\rho \du)}_{BMO}+C\normp{\nabla u}{4}\normp{\mu \D u}{4}\norm{\nabla (\du)}
        \\
        & \leq C \norm{\nabla u} \norm{\nabla \du} \norm{\srho \du}+ C \norm{\nabla \du} \norm{\srho \du}\norm{\smu \D u}
        \\
        & \leq \frac{1}{20}\norm{\nabla \du}^2 + C \norm{\srho \du}^2\norm{\smu \D u}^2.
    \end{split}
\end{equation*}

Introducing the previous bound in $I_6$ and using \eqref{4.15} we get
\begin{equation*}
    I_6 \leq \frac{d}{dt}\int_{\T^2}P\partial_i u_j \partial_j u_i dx +\frac{3}{20}\norm{\D (\du)}^2 + C \norm{\srho \du}^2\norm{\smu \D u}^2.
\end{equation*}
Therefore, \eqref{4.17} can be rewritten as
\begin{equation*}
    \frac{d}{dt}\Big(\frac{1}{2}\norm{\srho \du}^2-\int_{\T^2}P\partial_i u_j \partial_j u_i dx\Big) + \frac{1}{4} \norm{\smu \D (\du)}^2 \leq C \norm{\smu \D u}^2\norm{\srho \du}^2.
\end{equation*}
We denote by $\ph(t)= \frac{1}{2}\norm{\srho \du}^2-\displaystyle \int_{\T^2}P\partial_i u_j \partial_j u_i dx$. Introducing the time-wight $t^{2-(\ga+s)}$ we obtain
\begin{equation*}
    \begin{split}
        \frac{d}{dt}\parent{t^{2-(\ga+s)}\ph(t)} + \frac{1}{4}t^{2-(\ga+s)}\norm{\smu \D (\du)}^2\leq C t^{2-(\ga+s)} \norm{\smu \D u}^2\norm{\srho \du}^2 + t^{1-(\ga+s)}\ph(t).
    \end{split}
\end{equation*}
Using the bounds \eqref{4.11} and \eqref{I_1+I_3} with $v=u$ we have
\begin{equation*}
 \Big|\int_{\T^2}P\partial_i u_j \partial_j u_i dx\Big| \leq I_2+I_3 \leq \frac{1}{4}\norm{\srho \du}^2 + C \norm{\nabla u}^2\norm{\smu \D u}^2.
\end{equation*}
The above estimate allows us to derive upper and lower bounds for $\ph(t)$, as it was done in \eqref{ULBound},
\begin{equation}\label{boundphi}
    \frac{1}{4}\norm{\srho \du}^2 - C \norm{\nabla u}^2\norm{\smu \D u}^2 \leq \ph(t) \leq \frac{3}{4}\norm{\srho \du}^2 + C \norm{\nabla u}^2\norm{\smu \D u}^2.
\end{equation}
Therefore, using the lower bound of $\ph(t)$ we have
\begin{equation*}
\begin{split}
    \frac{d}{dt}\parent{t^{2-(\ga+s)}\ph(t)} + \frac{1}{4}t^{2-(\ga+s)}\norm{\smu \D (\du)}^2\leq& C t^{2-(\ga+s)} \norm{\smu \D u}^2\ph(t) \\
    &+C t^{2-(\ga+s)} \norm{\smu \D u}^4 \norm{\nabla u}^2 + t^{1-(\ga+s)}\ph(t).
\end{split}
\end{equation*}
 Grönwall Lemma yields
\begin{equation*}
    \begin{split}
        t^{2-(\ga+s)}&\ph(t)+ \frac{1}{4}\int_0^t \tau^{2-(\ga+s)}\norm{\smu \D (\du)}^2 d\tau \\
        &\leq C \exp{\Big(\int_0^t\norm{\smu \D u}^2d\tau\Big)} \Big(C \int_0^t \tau^{2-(\ga+s)} \norm{\smu \D u}^4 \norm{\nabla u}^2 d\tau + \int_0^t \tau^{1-(\ga+s)}\ph(\tau) d\tau\Big) .
    \end{split}
\end{equation*}

Using the lower bound of $\ph(t)$ it follows that
\begin{equation*}
\begin{split}
        \frac{1}{4}t^{2-(\ga+s)}&\norm{\srho \du}^2+\frac{1}{4}\int_0^t \tau^{2-(\ga+s)}\norm{\smu \D (\du)}^2 d\tau \\
        \leq&  C \exp{\Big(\int_0^t\norm{\smu \D u}^2 d\tau\Big)} \Big(C \int_0^t \tau^{2-(\ga+s)} \norm{\smu \D u}^4 \norm{\nabla u}^2 d\tau + \int_0^t t^{1-(\ga+s)}\ph(\tau) d\tau \Big) \\
        &+ t^{2-(\ga+s)}\norm{\smu \D u}^4.
\end{split}
\end{equation*}

Lastly, by \eqref{energy_ineq} and \eqref{lemma4.3}, together with the upper bound for $\ph(t)$ we can conclude that
\begin{equation}\label{lemma4.4}
   t^{2-(\ga+s)} \norm{\srho \du}^2+\int_0^t \tau^{2-(\ga+s)}\norm{\smu \D (\du)}^2 d\tau \leq C(T, \norm{u_0}_{H^{\ga+s}},\norm{\sr u_0}).
\end{equation}

\medskip
\textbf{Step 4: }
The solution near $t=0$ is controlled by the time-weighted estimates. However, when considering estimates \eqref{lemma4.3} and \eqref{lemma4.4} away from $t=0$, there is no need to introduce a time weight, since for fixed $t>0$, we have $\nabla u(t)\in\LpT{2}$ by \eqref{lemma4.3}. In this step, we focus on the decay in time of the solution. To this end, we study the solution in the time interval $(1,t)$ as $t\to\infty$.

\begin{lemma}\label{ThDecay}
     Let $(\rho, u)$ a solution to \eqref{VJ} for $t\geq1$ (recall that $u(1)\in \LpT{2}$) given by Theorem \ref{th2}. There exists a constant $C>0$ depending on $u(1)$, and $0<\beta_3<\beta_2<\beta_1=\frac{C\mu^m}{2\rho^M\norm{\rho}^2}$ such that  $\forall t\geq 1$ we have
     \begin{equation}\label{D1}
         \norm{\srho u}\leq C e^{-\beta_1 t},
     \end{equation}
     \begin{equation}\label{D2}
             e^{2\beta_2 t}\norm{\smu \D u}^2 + 
            \displaystyle C\int_1^t e^{2\beta_2\tau}\parent{\norm{\srho u}^2+\norm{\smu \D u}^2 +  \norm{\srho \du}^2}d\tau \leq C,
     \end{equation}
     \begin{equation}\label{D3}
         e^{2\beta_3 t}\norm{\srho \du}^2+C\int_1^t e^{2\beta_3 \tau} \norm{\smu \D (\du)}^2d\tau\leq C.
     \end{equation}
\end{lemma}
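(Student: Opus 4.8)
The plan is to prove the three bounds in the order \eqref{D1}, \eqref{D2}, \eqref{D3}: the strict ordering $0<\beta_3<\beta_2<\beta_1$ is exactly what makes the exponentially weighted remainder integrals produced at one level integrable in time thanks to the decay already obtained at the previous level. All estimates are carried out on $[1,\infty)$, where $u(1)\in H^1(\T^2)$ (indeed $\nabla u(1)\in\LpT{2}$ by \eqref{lemma4.3}, and then $u(1)\in\LpT{2}$ by \eqref{Poincaré}), so that the \emph{unweighted} differential inequalities obtained in Steps~2--3 with $v=u$,
\begin{equation*}
    \norm{\srho\du}^2+\frac{d}{dt}\norm{\smu\D u}^2\leq C\norm{\nabla u}^2\norm{\smu\D u}^2,
\end{equation*}
\begin{equation*}
    \frac{d}{dt}\ph(t)+\tfrac14\norm{\smu\D(\du)}^2\leq C\norm{\smu\D u}^2\norm{\srho\du}^2,\qquad \ph(t)=\tfrac12\norm{\srho\du}^2-\int_{\T^2}P\,\partial_iu_j\,\partial_ju_i\,dx,
\end{equation*}
together with the two-sided control of $\ph$ in \eqref{boundphi}, are at our disposal, as are $\sup_{\tau\geq1}\norm{\nabla u(\tau)}\leq C$ and $\sup_{\tau\geq1}\norm{\srho\du(\tau)}\leq C$ from \eqref{lemma4.3}--\eqref{lemma4.4}. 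Note also that $\norm{\rho(t)}_{\LpT{2}}=\norm{\rho_0}_{\LpT{2}}$ is conserved along the volume-preserving flow \eqref{traj}, so $\beta_1$ is a genuine constant.

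For \eqref{D1} I would combine the energy identity $\frac{d}{dt}\norm{\srho u}^2=-\norm{\smu\D u}^2$ of Step~1 with Korn's identity $\norm{\D u}^2=2\norm{\nabla u}^2$ (valid for divergence-free fields on $\T^2$, with the paper's normalization of $\D$) and the Poincaré inequality \eqref{Poincaré}, which give $\norm{\smu\D u}^2\geq 2\mu^m\norm{\nabla u}^2\geq 2\beta_1\norm{\srho u}^2$ with $\beta_1=C\mu^m/(2\rho^M\norm{\rho}^2)$; Grönwall together with $\norm{\srho u(1)}\leq\norm{\sr u_0}$ from \eqref{energy_ineq} then yields \eqref{D1}. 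For \eqref{D2}, I would first multiply the energy identity by $e^{2\beta_2 t}$ and integrate on $(1,t)$: the remainder $2\beta_2\int_1^t e^{2\beta_2\tau}\norm{\srho u}^2d\tau$ is bounded uniformly in $t$ because $\beta_2<\beta_1$ and \eqref{D1} holds, which delivers the $\norm{\srho u}^2$ and $\norm{\smu\D u}^2$ contributions to \eqref{D2}. Then I would multiply the first differential inequality above by $e^{2\beta_2 t}$ and integrate; the right-hand side $\int_1^t(C\norm{\nabla u}^2+2\beta_2)e^{2\beta_2\tau}\norm{\smu\D u}^2d\tau$ is controlled using $\sup_\tau\norm{\nabla u(\tau)}\leq C$ and $\int_1^\infty e^{2\beta_2\tau}\norm{\smu\D u}^2d\tau\leq C$ just obtained, producing the $\norm{\srho\du}^2$ term and the pointwise bound $e^{2\beta_2 t}\norm{\smu\D u(t)}^2\leq C$, hence in particular $\norm{\smu\D u(\tau)}\leq Ce^{-\beta_2\tau}$.

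For \eqref{D3} I would multiply the second differential inequality above by $e^{2\beta_3 t}$ and integrate on $(1,t)$, getting $e^{2\beta_3 t}\ph(t)+\tfrac14\int_1^t e^{2\beta_3\tau}\norm{\smu\D(\du)}^2d\tau$ on the left and, on the right, $e^{2\beta_3}\ph(1)+C\int_1^t e^{2\beta_3\tau}\norm{\smu\D u}^2\norm{\srho\du}^2d\tau+2\beta_3\int_1^t e^{2\beta_3\tau}\ph(\tau)d\tau$. Here $\ph(1)$ is finite by \eqref{lemma4.3}--\eqref{lemma4.4}; the second integral is controlled by the decay $\norm{\smu\D u(\tau)}^2\leq Ce^{-2\beta_2\tau}$ from \eqref{D2} together with $\int_1^\infty\norm{\srho\du}^2d\tau<\infty$; and the last integral is handled by inserting the upper bound of \eqref{boundphi}, using again $\beta_3<\beta_2$ and $\norm{\nabla u}^2\norm{\smu\D u}^2\leq Ce^{-4\beta_2\tau}$. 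One then reads off \eqref{D3} from the lower bound of \eqref{boundphi}, since $e^{2\beta_3 t}\norm{\nabla u(t)}^2\norm{\smu\D u(t)}^2\leq Ce^{(2\beta_3-4\beta_2)t}\leq C$ because $\beta_3<2\beta_2$. The main obstacle is not any individual computation—they are of the same nature as the a priori estimates already derived—but rather setting up the hierarchy $0<\beta_3<\beta_2<\beta_1$ correctly and, within it, securing \eqref{D1} with a truly time-independent rate: this is what makes the whole cascade of weighted estimates close, and it relies on the conservation of $\norm{\rho(t)}_{\LpT{2}}$ and on the uniform lower viscosity bound $\mu\geq\mu^m$.
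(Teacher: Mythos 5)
Your proposal is correct and follows essentially the same route as the paper: the cascade of exponentially weighted energy estimates with rates $0<\beta_3<\beta_2<\beta_1$, using the Poincaré inequality \eqref{Poincaré} for the base decay \eqref{D1} and the differential inequalities from Steps 1--3 together with the two-sided bound \eqref{boundphi} for \eqref{D2}--\eqref{D3}. The only difference is cosmetic bookkeeping: you integrate the energy identity and the $\du$-inequality separately instead of summing them and applying Gr\"onwall, which spares you the paper's auxiliary constraint $\beta_2<1/2$ but changes nothing essential.
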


\begin{proof}
    We start with \eqref{D1}. As in the a priori estimates we test the momentum equation of \eqref{VJ} with $u$ and we get
\begin{equation}\label{4.24}
    \frac{1}{2}\frac{d}{dt}\norm{\srho u}^2 + \int_{\T^2}\mu\abs{ \D u}^2 dx = 0.
\end{equation}

By the lower bound of $\mu$ and Poincaré-type inequality \eqref{Poincaré},
\begin{equation*}
    \begin{split}
        \frac{d}{dt}\norm{\srho u}^2 + \frac{C\mu^m}{2\rho^M\norm{\rho}^2} \int_{\T^2}\rho^M\abs{u}^2dx \leq \frac{d}{dt}\norm{\srho u}^2 + \mu^m \int_{\T^2}\abs{ \D u}^2 dx \leq 0.
    \end{split}
\end{equation*}
By the definition of upper bound of $\rho$ and denoting by $\beta_1=\frac{C\mu^m}{2\rho^M\norm{\rho}^2}$,
\begin{equation*}
     \frac{d}{dt}\norm{\srho u}^2+2\beta_1\norm{\srho u}^2 \leq 0 ~\Longrightarrow ~\frac{d}{dt}\norm{\srho u}^2\leq -2\beta_1\norm{\srho u}^2.
\end{equation*}

Integrating in $[1,\infty)$, we obtain the result
\begin{equation*}
    \norm{\srho u}^2 \leq Ce^{-2\beta_1 t} \|\sqrt{\rho(1)}u(1)\|^2 \leq C(\norm{\sr u_0}) e^{-2\beta_1 t}, 
\end{equation*}

Next, to establish \eqref{D2}, we test the equation with $\du$  in the same manner as in \eqref{lemma4.3}. It follows that
\begin{equation*}
    \frac{d}{dt}\norm{\smu \D u}^2+\norm{\srho \du}^2 \leq C \norm{\nabla u}^4.
\end{equation*}

We add \eqref{4.24} to the above inequality to get
\begin{equation*}
    \frac{d}{dt}\parent{\norm{\srho u}^2+\norm{\smu \D u}^2}+\norm{\srho \du}^2 + \norm{\smu \D u}^2\leq C \norm{\nabla u}^4.
\end{equation*}

Adding $\norm{\srho u}^2$ in both sides of the above inequality and taking $\beta_2\in\R$ (conditions on $\beta_2$ imposed later) and using \eqref{D1} we obtain 
\begin{equation}\label{eqbeta2}
\begin{split}
    \frac{d}{dt}&\parent{e^{2\beta_2t}\norm{\srho u}^2+e^{2\beta_2t}\norm{\smu \D u}^2}  \\
    & + (1-2\beta_2)e^{2\beta_2t}\parent{\norm{\smu \D u}^2 + \norm{\srho u}^2 + \norm{\srho \du}^2}  \leq C e^{2\beta_2t} \norm{\nabla u}^4 + C e^{-2(\beta_1-\beta_2)t} .
\end{split}
\end{equation}

Then, $\beta_2$ must satisfy
$$
\begin{array}{lr}
    1-2\beta_2>0~\Longrightarrow~\beta_2<1/2, &  \quad \beta_1-\beta_2>0 ~\Longrightarrow~\beta_2<\beta_1
\end{array}
$$

Hence we choose $\beta_2=\min\llav{1/4,\beta_1/2}$.

Adding $e^{2\beta_2t} \norm{\nabla u}^2\norm{\srho u}^2$ in both sides in \eqref{eqbeta2} and rewriting the right-hand side, we find
\begin{equation*}
    \begin{split}
    \frac{d}{dt}
    &\parent{e^{2\beta_2t}\norm{\srho u}^2+e^{2\beta_2t}\norm{\smu \D u}^2}+Ce^{2\beta_2t}\parent{ \norm{\smu \D u}^2 + \norm{\srho \du}^2 + \norm{\srho u}^2} \\ 
    & + e^{2\beta_2t} \norm{\nabla u}^2\norm{\srho u}^2
    \leq C \norm{\nabla u}^2 \parent{e^{2\beta_2t}\norm{\nabla u}^2 + e^{2\beta_2t}\norm{\srho u}^2} +C e^{-2(\beta_1-\beta_2)t}.
\end{split}
\end{equation*}

Applying Gr\"onwall's Lemma in $(1,t)$, 
\begin{equation*}
\begin{split}
    e^{2\beta_2t}\norm{\srho u}^2&+e^{2\beta_2t}\norm{\smu \D u}^2 +C\int_1^te^{2\beta_2\tau}\parent{\norm{\srho u}^2 + \norm{\smu \D u}^2 + \norm{\srho \du}^2}d\tau
    \\ 
    & \leq C \exp\Big(\int_1^t\norm{\nabla u}^2d\tau\Big)\Big( \|\sqrt{\rho(1)}u(1)\|^2+\|\sqrt{\mu(1)} \D u(1)\|^2+C\int_1^t e^{-2(\beta_1-\beta_2) \tau}d\tau\Big).
\end{split}
\end{equation*}

Thus we can conclude that
\begin{equation*}
    e^{2\beta_2t}\norm{\srho u}^2+e^{2\beta_2t}\norm{\smu \D u}^2 + C \int_1^te^{2\beta_2\tau}\parent{\norm{\srho u}^2 + \norm{\smu \D u}^2 + \norm{\srho \du}^2}d\tau \leq C.
\end{equation*}

Lastly, we prove \eqref{D3}. As in the proof of \eqref{lemma4.4} we apply $D_t$ to the momentum equation in \eqref{VJ} and test it with $\du$,
\begin{equation*}
    \frac{d}{dt}\ph(t)+\frac{1}{4}\norm{\smu \D (\du)}^2 \leq C \norm{\nabla u}^2 \norm{\srho \du}^2,
\end{equation*}
where $\ph(t)=\displaystyle\frac{1}{2}\norm{\srho\du}^2-\int_{\T^2}P\partial_ju_i\partial_iu_jdx$ and we recall that $\ph(t)$ satisfies the upper and lower bounds in \eqref{boundphi}. 
Multiplying the equation by $e^{2\beta_3t}$, where $\beta_3=\beta_2/2<1/4$,  and then using  \eqref{D2}, we get
\begin{equation*}
\begin{split}
    \frac{d}{dt}\parent{e^{2\beta_3t}\ph(t)}+\frac{e^{2\beta_3t}}{4}\norm{\smu \D (\du)}^2  &\leq C e^{2\beta_3t} \norm{\nabla u}^2 \norm{\srho \du}^2 + 2\beta_3 e^{2\beta_3t}\ph(t)\\
    &\leq C e^{-2(\beta_2-\beta_3)t} \norm{\srho \du}^2 + C e^{2\beta_3t}\ph(t).
\end{split}
\end{equation*}

Integrating in $[1,\infty)$ we have
\begin{equation}\label{preD3}
\begin{split}
    e^{2\beta_3t}\ph(t) + \frac{1}{4}\int_1^t e^{2\beta_3\tau} \norm{\smu \D (\du)}^2 d \tau \leq C &+ C\int_1^t e^{-2(\beta_2-\beta_3)\tau} \norm{\srho \du}^2 d\tau\\&+C\int_1^t e^{2\beta_3\tau}\ph(\tau)d\tau.
\end{split}
\end{equation}
The estimate \eqref{D2} and the fact that $\beta_2-\beta_3>0$ implies that the first integral in the right hand side is bounded uniformly in time. Using the upper bound of $\ph$ in \eqref{boundphi} followed by \eqref{D2}, the second integral is bounded as well,
\begin{equation*}
\begin{split}
    \int_1^t e^{2\beta_3\tau}\ph(\tau)d\tau&\leq \frac{3}{4} \int_1^t e^{2\beta_3\tau}\norm{\srho \du}^2d\tau + C \int_1^t e^{2\beta_3\tau}\norm{\nabla u}^4 d\tau \\
    & \leq \frac{3}{4} \int_1^t e^{2\beta_2\tau}\norm{\srho \du}^2d\tau + C \int_1^t e^{-2(2\beta_2-\beta_3)\tau} d\tau \leq C.
\end{split}
\end{equation*}
Therefore, introducing the above estimates in \eqref{preD3} and using the lower bound of $\ph(t)$ in \eqref{boundphi}, we get
\begin{equation*}
    e^{2\beta_3 t}\norm{\srho \du(t)}^2+C\int_1^t e^{2\beta_3 \tau} \norm{\smu \D (\du(\tau))}^2d\tau \leq C + C e^{2\beta_3 t }\norm{\nabla u}^4 \leq C,
\end{equation*}
where we have used \eqref{D2} in the last inequality.
\end{proof}

\medskip
\textbf{Step 5: } 
Once we have obtained the decay estimates, we focus on proving the integrability in time of $\normp{\nabla u}{\infty}$. We start by rewriting $\nabla u$ using \eqref{expnu}.
\begin{equation}\label{decompu}
    \nabla u =\frac{1}{\bar{\mu}}\nabla \Delta^{-1}\P (\rho \du)-\nabla \Delta^{-1} \P \nabla \Big(\big(\frac{\mu}{\bar{\mu}}-1\big)\D u\Big) = I_7+I_8.
\end{equation}

We use Sobolev embeddings, the equality \eqref{P_rel} and the boundedness in $L^p$ of SIO, to bound $I_7$,
\begin{equation*}
\begin{split}
    \abs{I_7}&\leq C \norm{\nabla \Delta^{-1}\P (\rho \du)}_{W^{1, \frac{2}{1-\ga}}} \leq C \normp{\nabla \Delta^{-1}\P (\rho \du)}{\frac{2}{1-\ga}}+C\normp{\nabla \nabla \Delta^{-1}\P (\rho \du)}{\frac{2}{1-\ga}}
    \\
    &\leq C \normp{\nabla \Delta^{-1}\P \nabla \c (\mu \D u)}{\frac{2}{1-\ga}}+C\normp{\nabla \nabla \Delta^{-1}\P (\rho \du)}{\frac{2}{1-\ga}} \leq C \normp{\mu \D u}{\frac{2}{1-\ga}}+C\normp{\rho \du}{\frac{2}{1-\ga}}.
\end{split}
\end{equation*}

By \eqref{grp} and Lemma \ref{LemDanchin2} for $p=\frac{2}{1-\ga}$
\begin{equation*}
    \begin{split}
        \abs{I_7}&\leq  C \norm{\nabla u}^{1-\ga}\norm{\srho \du}^\ga + C \norm{\srho \du}^{1-\ga}\norm{\nabla \du}^\ga \log^{\ga/2}\Big(e+\frac{\norm{\rho-M}^2}{M^2}+\frac{\norm{\nabla \du}^2}{\norm{\srho \du}^2}\Big)\\
        &=I_{7,1}+I_{7,2}. 
    \end{split}
\end{equation*}
We divide
\begin{equation*}
    \int_0^T \abs{I_{7,1}}dt = \int_0^1 \abs{I_{7,1}}dt + \int_1^T \abs{I_{7,2}}dt,
\end{equation*}
in order to use different properties of the solution in each time regime.

On $(0,1)$, we apply \eqref{lemma4.3} and \eqref{lemma4.4} to obtain
\begin{equation}\label{aux1}
\begin{aligned}
    \int_0^1 \abs{I_{7,1}}dt &= C\int_0^1 \norm{\nabla u}^{1-\ga}\norm{\srho \du}^\ga dt \\
    &\leq C\int_0^1 t^{-\frac{(1-\ga)(1-(\ga+s))}{2}}t^{-\ga(1-\frac{\ga+s}{2})}dt= C\int_0^1 t^{-\frac{1-s}{2}}dt\leq C.
\end{aligned}
\end{equation}

On $(1,T)$, using the decay estimates \eqref{D2} and \eqref{D3}, we get

\begin{equation*}
    \int_1^T \abs{I_{7,1}}dt = C\int_1^T \norm{\nabla u}^{1-\ga}\norm{\srho \du}^\ga dt \leq C\int_0^Te^{-t(\beta_2(1-\ga)+\beta_3\ga)}dt \leq C.
\end{equation*}

We proceed analogously, splitting
\begin{equation}\label{I7,2}
    \int_0^T \abs{I_{7,2}} dt = \int_0^1 \abs{I_{7,2}} dt + \int_1^T \abs{I_{7,2}} dt.
\end{equation}

To handle the logarithmic term, we notice that for any $\ep>0$,
\begin{equation}\label{log_bound}
\log(e + x^2) \lesssim x^{2\ep} \quad \text{for all } x \gg 1,
\end{equation}
which allows us to absorb the logarithmic factor into a power of the norm.
Hence, using \eqref{lemma4.4} and Young's inequality for $p=\frac{2}{\ga(1+\ep)}$, 
\begin{equation}\label{aux2}
    \int_0^1 \abs{I_{7,2}} dt \leq C\int_0^1 \norm{\srho \du}^{1-\ga\parent{1+\ep}}\norm{\nabla \du}^{\ga\parent{1+\ep}}  dt \leq C\int_0^1 t^{-\frac{2-(\ga+s)}{2-\ga(1+\ep)}}dt \leq C,
\end{equation}
which is finite if we take $\ep < s$.
Decay estimate \eqref{D3} and Young's inequality with $p=\frac{2}{\ga(1+\ep)}$ give us
\begin{equation*}
    \int_1^T \abs{I_{7,2}} dt \leq C\int_1^T \norm{\srho \du}^{1-\ga\parent{1+\ep}}\norm{\nabla \du}^{\ga\parent{1+\ep}}  dt \leq C\int_1^T e^{-\beta_3\frac{2-\ga(1+\ep)}{2}t}dt \leq C,
\end{equation*}
which is finite if we take $\ep<2/\ga-1$.
Recall that for $x \ll 1$, the logarithm is bounded and we can also obtain a similar bound. Therefore, we can conclude that there exists $C>0$ independent of time such that
\begin{equation}\label{I7bound}
    \int_0^T \abs{I_7}dt\leq C.
\end{equation}

The bound of $I_8$ will be obtained in a completely different manner from $I_7$. We are interested in taking advantage of the initial regularity of the patch, which will be conserved in time. Using the definition of the Leray projector $\P$ it is possible to obtain
\begin{equation*}
\begin{split}
    I_8&=\nabla \Delta^{-1} \P \nabla \c \Big(\big(\frac{\mu}{\bar{\mu}}-1\big)\D u\Big)=\nabla \Delta^{-1} \nabla \c \parent{\I-\nabla \Delta^{-1} \nabla\c} \Big(\big(\frac{\mu}{\bar{\mu}}-1\big)\D u\Big)
\end{split}
\end{equation*}

The operator $\nabla \Delta^{-1} \P \nabla$ is a Singular Integral Operator on the torus $\T^2$, consisting of second and fourth order Riesz Transform. 
As $\big(\frac{\mu}{\bar{\mu}}-1\big)\D u$ is a $2\pi$-periodic function in $L^2$ we can write 
\begin{equation*}
\big(\frac{\mu}{\bar{\mu}}-1
\big)\D u(x)= \sum_{l\in\Z^2} c_l e^{i l\c x},
\end{equation*}
where $c_l= \FF\big(\big(\frac{\mu}{\bar{\mu}}-1\big)\D u\big)(l)$ is the coefficient of the Fourier series. Hence, we have
\begin{equation*}
    I_8=\sum_{l\in\Z^2} \Big(\frac{l_il_m}{\abs{l}^2}-\frac{l_il_jl_kl_m}{\abs{l}^4}\Big) c_l e^{i l\c x}=\int_{\T^2}K^*(x-y)\big(\frac{\mu(y)}{\bar{\mu}}-1\big)\D u(y)dy,
\end{equation*}
where $K^*(x)$ is the kernel obtained by periodizing the kernel associated to the Riesz Transforms in $\R^2$  (see \cite{Calderon54}). As expected, $K^*(x)$ has the same properties as the analogue in the whole space. 
Then, we conclude by following the strategy developed in \cite{Gancedo23} to estimate the term $I_{8}$. 
The approach relies on analyzing the behavior of the function separately inside and outside $D(t)$, 
reducing the singular behavior to the boundary. The singular term is then bounded thanks to the $C^{1+\gamma}$ regularity of the boundary, together with the fact the kernels of the singular operators are even.
On each side, one can take advantage of the piecewise-Hölder regularity of the viscosity and $\mathbb{D}u$. In turn, this higher regularity must be propagated, and the control of $\|\nabla u\|_{L^1_TL^\infty}$  is again needed. The circular argument is overcome by exploiting the smallness of the viscosity jump, together with the uniform-in-time estimate achieved for $I_7$, where the decay estimate become essential. The use of the Gagliardo-Nirenberg inequality in \cite{Gancedo23} must be replaced here by its refined logarithmic version (Lemma \ref{LemDanchin2}), as in \eqref{I7,2}, allowing us to close the argument and obtain the desired estimates, 
\begin{equation*}
    \begin{aligned}
        \int_0^T \|\nabla u\|_{L^\infty}dt\leq C,\hspace{0.5cm}
        \int_0^T \|\nabla u\|_{\dot{C}^\gamma_D}dt\leq C.
    \end{aligned}
\end{equation*}

\subsection{Uniqueness}
In this case, uniqueness is established in Lagrangian coordinates. This approach allows for the difficulties arising from the low regularity of the density and viscosity.
We denote $(\rho_0,v)$ the solution to \eqref{VJ} in Lagrangian coordinates,
\begin{equation*}
    \begin{array}{cc}
        \rho_0(y)=\rho(t,X(t,y)), & v(t,y)=u(t,X(t,y)), 
    \end{array}
\end{equation*}
and correspondingly the pressure $Q$ and viscosity $\mu_0$,
\begin{equation*}
    \begin{array}{cc}
        Q(t,y)=P(t, X(t,y)), & \mu_0(y)=\mu(t,X(t,y)), 
    \end{array}
\end{equation*}
where $X$ denotes the flow defined by \eqref{traj}.
Note that
\begin{equation*}
    \nabla X(t,y) = \I+\int_0^t \nabla v (\tau,y)d\tau.
\end{equation*}
We define $A=(\nabla X(\c,t))^{-1}$ and $A^*$ its transpose. Then, Lagrangian variables provide 
\begin{equation*}
\nabla_u = A^* \nabla,\quad\nabla_u \c=\div(A \,)= A^* : \nabla,\quad 
    \D_{u}=A^*\nabla  + (\nabla \,)^* A. 
\end{equation*}
where the divergence-free condition is used in the second equality. The system \eqref{VJ} becomes \begin{equation*}
\left \{
\begin{aligned}
    \rho_0 \partial_t v &= \nabla_{u}(\mu_0 \D_{u}v-Q\I),\\
    \nabla_{u}\c v&=0,
    \end{aligned}
\right.
\end{equation*}
being equivalent if 
\begin{equation*}
    \int_0^T \norm{\nabla v}_{L^\infty} \leq c <1.
\end{equation*}
In that case, we can write
\begin{equation}\label{A_def}
    A(t)=\sum_{j=0}^\infty (-1)^j\Big(\int_0^t \nabla v(\tau, \c) d \tau\Big)^j.
\end{equation}
Let $(\rho^1,u^1)$, $(\rho^2,u^2)$ be two solutions to \eqref{VJ} for the same initial data. In Lagrangian coordinates, their difference will be denoted as
$$
\begin{array}{lcr}
   \delta v=v^2-v^1,  & \delta Q = Q^2-Q^1, & \delta A= A^2-A^1,
\end{array}
$$
and the following equation is satisfied 
\begin{equation*}
    \left\{
    \begin{aligned}
         \rho_0  \partial_t \delta v &= \nabla_{u^1} \c (\mu_0 \D_{u^1}\delta v-\delta Q \I) + \nabla_{u^2} \c (\mu_0 \D_{u^2} v^2- Q^2 \I)-\nabla_{u^1} \c (\mu_0 \D_{u^1} v^2- Q^2 \I), \\
         \nabla_{u^1} \c \delta v &= (\nabla_{u^1}-\nabla_{u^2})\c v^2,
         \\
         \delta v |_{t=0}&=0.
    \end{aligned}
    \right.
\end{equation*}
We will now prove that for $T>0$ small enough
\begin{equation*}
    \int_0^T \norm{\nabla \delta v}^2 dt=0.
\end{equation*}
We split $\delta v = z + w$ where $w$ is the solution to the equation
\begin{equation}\label{w}
    \nabla_{u^1} \c  w = (\nabla_{u^1}-\nabla_{u^2})\c v^2 = \nabla \c (\delta A v^2).
\end{equation}
Then the equation for $z$ becomes
\begin{equation}\label{SIS_z}
 \left\{ \begin{aligned}
 \rho_0 \partial_t z - \nabla_{u^1} \c \left( \mu_0 \D_{u^1} z \right) &= \nabla_{u^1} \delta Q + \nabla_{u^2} \c \left( \mu_0 \D_{u^2} v^2 - Q^2 \I \right) - \nabla_{u^1} \c ( \mu_0 \D_{u^1} v^2 - Q^2 \I ) \\
 &\quad- \rho_0 \partial_t w + \nabla_{u^1} \c \left( \mu_0 \D_{u^1} w \right), \\ \nabla_{u^1} \c \delta z &= 0, \\
 \delta z |_{t=0} &= 0. 
 \end{aligned} \right.
\end{equation}
Note that 
\begin{equation}\label{deltaA}
    \norm{\delta A}\leq Ct^{1/2}\norm{\nabla \delta v}_{L^2_T L^2},
\end{equation}
and 
\begin{equation}\label{deltaA_t}
    \|\partial_t\delta A\|\leq C\|\nabla\delta v\|.
\end{equation}
To control $w$ we state the following lemma. 
\begin{lemma}\label{Lw}
    The solution $w$ to \eqref{w} satisfies
    \begin{equation*}
        \norm{w}_{L^\infty_T L^2}+\norm{\nabla w}_{L^2_T L^2} + \norm{\partial_t w}_{L^p_T L^r} \leq C(T) \norm{\nabla\delta v}_{L^2_T L^2},
    \end{equation*}
    where $p=\frac{2}{2-\ga}$, $q>2$, and $r=\frac{2q}{2+q}$. 
\end{lemma}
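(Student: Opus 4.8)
The plan is to read this estimate off from Lemma \ref{Control_w} applied to the underdetermined divergence problem \eqref{w}. First I would set $A=A^1$, which satisfies $\det A^1=1$ since $u^1$ is divergence free. Using the Lagrangian incompressibility identities $\div(A^iv^i)=\nabla_{u^i}\c v^i=0$ for $i=1,2$, the forcing in \eqref{w} admits the two equivalent representations
\[
(\nabla_{u^1}-\nabla_{u^2})\c v^2=\nabla\c(\delta A\,v^2)=\nabla\c(A^1\delta v)=\nabla_{u^1}\c\delta v ,
\]
so one may take $g=\nabla\c R$ with $R=\delta A\,v^2$. It then remains to verify the three hypotheses of Lemma \ref{Control_w} and to track the constants so that $C(T)\to0$ as $T\to0^+$, which is what the uniqueness argument ultimately needs.

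For the smallness condition $\norm{\I-A^1}_{L^\infty_TL^\infty}+\norm{A^1_t}_{L^p_TL^q}<c$, the first term is small for small $T$ because the running hypothesis $\int_0^T\norm{\nabla v^1}_{L^\infty}\le c<1$ makes the Neumann series for $A^1=(\I+\int_0^t\nabla v^1)^{-1}$ close to $\I$. For the second, $\partial_tA^1=-A^1(\nabla v^1)A^1$ gives $\norm{A^1_t}_{L^p_TL^q}\lesssim\norm{\nabla v^1}_{L^p_TL^q}$, and this I would control by interpolating the time‑weighted a priori bounds of Steps~2--3, namely $t^{\frac{1-(\ga+s)}2}\nabla u\in L^\infty_TL^2$, $\nabla u\in L^2_TL^2$ and $t^{\frac{2-(\ga+s)}2}\nabla^2u\in L^2_TL^m$ for every $m<\infty$ (together with $\nabla v^1=((\nabla u^1)\circ X^1)\nabla X^1$ and $\norm{\nabla X^1}_{L^\infty_TL^\infty}\le 1+c$). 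The Gagliardo--Nirenberg exponents fit exactly when $p=\frac{2}{2-\ga}$ and $q\ge\max\{2/\ga,2\}$, and the resulting norm is finite and vanishes with $T$.

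Next I would estimate $R$, $g$ and $R_t$. For $R=\delta A\,v^2$, bound \eqref{deltaA} gives $\norm{\delta A}_{L^\infty_TL^2}\le T^{1/2}\norm{\nabla\delta v}_{L^2_TL^2}$, so with the boundedness of $v^2$ coming from the existence part one gets $\norm{R}_{L^\infty_TL^2}\le C(T)\norm{\delta v}_{L^2_TL^2}$ with $C(T)\to0$. For $g$ I would instead use the representation $g=\nabla_{u^1}\c\delta v=A^{1*}:\nabla\delta v$ (the identity already recalled above, which uses that the columns of $A^1$ are divergence free), so that $\norm{g}_{L^2_TL^2}\le\norm{A^1}_{L^\infty_TL^\infty}\norm{\nabla\delta v}_{L^2_TL^2}$. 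Finally, writing $R_t=(\partial_t\delta A)v^2+\delta A\,\partial_t v^2$, the leading part of $\partial_t\delta A$ is $A^1(\nabla\delta v)A^2$, while $\partial_tv^2=(D_tu^2)\circ X^2$ is controlled in the weighted spaces of Step~3 via the Poincaré inequality \eqref{Poincaré}; Hölder in space with $\frac1r=\frac12+\frac1q$ and in time with $\frac1p=\frac12+\frac{1-\ga}2$ then yields $\norm{R_t}_{L^p_TL^r}\le C(T)\norm{\delta v}_{L^2_TL^2}$, the choice $q\ge2/\ga$ being precisely what keeps the residual power of $t$ integrable near $t=0$. Plugging these three bounds into the conclusion of Lemma \ref{Control_w} gives the claimed estimate for $w$.

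I expect the main obstacle to be exactly the time‑weight bookkeeping in the last two points: near $t=0$ the available control of $\nabla u$, $\nabla^2u$ and $D_tu$ degenerates like $t^{-(1-(\ga+s))/2}$ and $t^{-(2-(\ga+s))/2}$, so after the Gagliardo--Nirenberg interpolations one must check that the powers of $t$ entering $\norm{\nabla v^1}_{L^p_TL^q}$ and $\norm{R_t}_{L^p_TL^r}$ remain integrable on $(0,T)$ for $p=\frac{2}{2-\ga}$ and $r=\frac{2q}{2+q}$. This compatibility is what pins down the admissible range $q\ge\max\{2/\ga,2\}$ in the statement; the rest is a routine combination of Hölder's inequality, Sobolev embeddings and $L^p$‑boundedness of singular integrals, and the already‑established a priori estimates.
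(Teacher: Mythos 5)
Your overall strategy coincides with the paper's: both apply Lemma \ref{Control_w} to \eqref{w} with $A=A^1$ and $R=\delta A\,v^2$, check the smallness hypothesis from the a priori bounds on $\nabla v^1$, and then estimate $R$, $g$ and $R_t$ using \eqref{deltaA}, the time-weighted estimates of Steps 2--3, the Sobolev embedding $W^{1,\frac{2}{1-\ga}}\hookrightarrow L^\infty$ and Gagliardo--Nirenberg. Your verification of the hypothesis $\norm{\I-A^1}_{L^\infty_TL^\infty}+\norm{A^1_t}_{L^p_TL^q}<c$ is in fact more explicit than the paper's one-line remark, and your algebraic identities for the forcing term are correct.

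There is, however, one substantive deviation that matters. For $\norm{\nabla w}_{L^2_TL^2}$, Lemma \ref{Control_w} gives $\norm{\nabla w}_{L^2_TL^2}\le C\norm{g}_{L^2_TL^2}$, and you choose to estimate $g$ through the representation $g=A^{1*}:\nabla\delta v$, obtaining $\norm{g}_{L^2_TL^2}\le\norm{A^1}_{L^\infty_TL^\infty}\norm{\nabla\delta v}_{L^2_TL^2}$. The resulting constant is of order one and does \emph{not} vanish as $T\to0$, contrary to the goal you set yourself in your first paragraph. The paper instead keeps the representation $g=\delta A^*:\nabla v^2$ and uses $\norm{\delta A}\le t^{1/2}\norm{\nabla\delta v}_{L^2_tL^2}$ together with $t^{1/2}\nabla v^2\in L^2_TL^\infty$, which yields $\norm{\nabla w}_{L^2_TL^2}\le C(T)\norm{\nabla\delta v}_{L^2_TL^2}$ with $C(T)\to0$. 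That smallness is indispensable at the end of the uniqueness argument, where one must absorb $\norm{\nabla\delta v}_{L^2_TL^2}\le\norm{\nabla z}_{L^2_TL^2}+\norm{\nabla w}_{L^2_TL^2}\le C(T)\norm{\nabla\delta v}_{L^2_TL^2}$ by taking $T$ small; with your choice the $\nabla w$ contribution cannot be absorbed. The fix is simply to revert to the representation $g=\delta A^*:\nabla v^2$ for this particular estimate. (A minor point, present in the paper's own proof as well: all the bounds actually produced are in terms of $\norm{\nabla\delta v}_{L^2_TL^2}$ rather than the $\norm{\delta v}_{L^2_TL^2}$ written in the statement.)
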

\begin{proof}
    Since $q>2$,  using \eqref{grp} and the estimates \eqref{lemma4.3}-\eqref{lemma4.4}, we have 
\begin{equation*}
    \begin{aligned}
        \|\nabla v^1\|_{L^q}\leq Ct^{-1+\frac{\gamma+s}{2}+\frac{1}{q}}.
    \end{aligned}
\end{equation*}
Therefore,
    \begin{equation*}
        \norm{\nabla v^1}_{L^1_T L^\infty}+\norm{\nabla v^1}_{L^p_T L^q}\leq C(T),
    \end{equation*}
    where $C(T)$ can be made sufficiently small by choosing $T$ small enough.
   Then we can apply Lemma \ref{Control_w} with $R=\delta A v^2$ and $g=\div(\delta A v^2)=\delta A^* : \nabla v^2 $.
 We first have
 \begin{equation*}
    \norm{w}\leq C \norm{\delta A v^2}. 
 \end{equation*}
Inequalities \eqref{deltaA} and \eqref{grp} provide \begin{equation*}
    \begin{split}
    \norm{\delta A v^2} &\leq C \norm{\nabla \delta v}_{L^2_T L^2}t^{1/2}\norm{v^2}_{L^\infty} \leq C \norm{\nabla \delta v}_{L^2_T L^2}t^{1/2}\norm{v^2}_{W^{1,\frac{2}{1-(\ga+s)}}}
        \\
        & \leq C \norm{\nabla \delta v}_{L^2_T L^2}t^{1/2}\norm{\nabla u^2}_{L^\frac{2}{1-(\ga+s)}}\\
        &\leq C \norm{\nabla \delta v}_{L^2_T L^2}\norm{t^{\frac{1-(\gamma+s)}2} \nabla u^2}^{1-(\ga+s)}\norm{t^{\frac{2-(\gamma+s)}2} \sr D_t u^2}^{\ga+s}. 
    \end{split}
    \end{equation*}
Bounds \eqref{lemma4.3} and \eqref{lemma4.4} yield
\begin{equation}\label{aux3}
\norm{\delta A v^2}_{L^\infty_T L^2}\leq C(T) \norm{\nabla \delta v}_{L^2_T L^2},
\end{equation}
    and thus
    $$\norm{w}_{L^\infty_T L^2}\leq C(T) \norm{\nabla \delta v}_{L^2_T L^2}.$$
    Next, using \eqref{deltaA} again yields
\begin{equation*}
        \norm{\nabla w}_{L^2_T L^2} \leq C \norm{\nabla \delta A^*: \nabla v^2}_{L^2_T L^2} \leq C \norm{\nabla \delta v}_{L^2_T L^2}\norm{t^{1/2}\nabla v^2}_{L^2_T L^\infty}.
    \end{equation*}
Step 5 can be repeated to get 
\begin{equation}\label{aux4}
\int_0^T t \norm{\nabla {u^2}}_{L^\infty}^2 dt\leq C.    
\end{equation}
In fact, one only needs to check the terms for small time, such as \eqref{aux1} or \eqref{aux2}, which follow directly by noticing  that if $\alpha<1$ then $2\alpha-1<1$.
Therefore, 
\begin{equation*}
        \norm{\nabla w}_{L^2_T L^2} \leq C(T) \norm{\nabla \delta v}_{L^2_T L^2}.
    \end{equation*}    
    Lastly, the estimate for the time derivatives in Lemma \ref{Control_w} gives  
    \begin{equation*}
    \begin{aligned}
  \norm{\partial_t w}_{L^p_T L^r} &\leq \norm{\delta A v^2}_{L^\infty_T L^2} + C \big(\norm{ \partial_t\delta A v^2}_{L^p_T L^r}+\norm{\delta A \partial_tv^2}_{L^p_T L^r}\big).
    \end{aligned}
    \end{equation*}
Estimate \eqref{aux3} and Hölder's inequality give
    \begin{equation*}
    \begin{aligned}
  \norm{\partial_t w}_{L^p_T L^r} &\leq C(T) \norm{\nabla \delta v}_{L^2_T L^2} + C \big(\norm{ \partial_t\delta A}_{L^2_T L^2} \norm{v^2}_{L^\frac{2}{1-\gamma}_T L^q}+\norm{\norm{\delta A} \norm{\partial_tv^2}_{L^q}}_{L^p_T}\big),
    \end{aligned}
    \end{equation*}
where it was used that $\frac{1}{r}=\frac{1}{2}+\frac{1}{q}$ and $p=\frac{2}{2-\gamma}$.
Then, \eqref{deltaA}-\eqref{deltaA_t} give
    \begin{equation*}
    \begin{aligned} 
        \norm{\partial_t w}_{L^p_T L^r}  & \leq C(T) \norm{\nabla \delta v}_{L^2_T L^2} +  C \norm{\nabla \delta v}_{L^2_T L^2} \Big(\norm{v^2}_{L^{\frac{2}{1-\ga}}_T L^q}+\norm{t^{1/2} \partial_t v^2}_{L^p_T L^q}\Big). 
           \end{aligned}
    \end{equation*}     
Recalling \eqref{Poincaré} and writing $\partial_t v^2=D_t v^2-v^2\cdot\nabla v^2$, we have
    \begin{equation*}
    \begin{aligned} 
        \norm{v^2}_{L^{\frac{2}{1-\ga}}_T L^q}&\leq C\norm{\nabla v^2}_{L^{\frac{2}{1-\ga}}_T L^2}\leq C(T),\\
        \norm{t^{1/2} \partial_t v^2}_{L^p_T L^q}&\leq \norm{t^{1/2} D_t v^2}_{L^p_T L^q}+\norm{t^{1/2} v^2\cdot \nabla v^2}_{L^p_T L^q}\leq C\norm{t^{1/2} \nabla D_t v^2}_{L^p_T L^2}+\norm{t^{1/2} v^2\cdot \nabla v^2}_{L^p_T L^q}\\
        &\leq C(T),
           \end{aligned}
    \end{equation*}     
thus
    \begin{equation*}
    \begin{aligned} 
        \norm{\partial_t w}_{L^p_T L^r}&\leq C(T) \norm{\nabla \delta v}_{L^2_T L^2}.
    \end{aligned}
    \end{equation*}
\end{proof}
Next, to obtain a control over $z$ we test the momentum equation in \eqref{SIS_z} against $z$ and we obtain
\begin{equation}\label{z}
    \frac{d}{dt}\norm{\sr z}^2+\norm{\smuo \D_{u^1}z}^2 \leq\tilde{I}_1 + \tilde{I}_2 + \tilde{I}_3 + \tilde{I}_4,
\end{equation}
where
\[
\begin{tabular}{@{}l@{\hspace{1cm}}l@{}}
$\tilde{I}_1 = \int_{\T^2} z \c \left(\nabla_{u^2} \c (\mu_0 \D_{u^2}v^2) - \nabla_{u^1} \c (\mu_0 \D_{u^1}v^2)\right) dy,$
&
$\tilde{I}_2 = \int_{\T^2} z \c \left(\nabla_{u^1} Q^2 - \nabla_{u^2} Q^2 \right) dy,$
\\[1.2em]
$\tilde{I}_3 = -\int_{\T^2} \rho_0 z\cdot\, \partial_t w \, dy,$
&
$\tilde{I}_4 = -\int_{\T^2} z\cdot\nabla_{u^1} \c (\mu_0 \D_{u^1} w) \, dy.$
\end{tabular}
\]

We proceed bounding each of these terms. Hence,
\begin{equation*}
    \begin{split}
  \tilde{I}_1 &= \int_{\T^2} \nabla z\corch{A^2\mu_0\parent{(A^2)^*\nabla v^2 + (\nabla v^2)^* A^2}-A^1\mu_0\parent{(A^1)^*\nabla v^2 + (\nabla v^2)^* A^1}} dy
        \\
        & = \int_{\T^2}\mu_0 \nabla z \corch{(\delta A (A^2)^*+A^1\delta A^*)\nabla v^2+\delta A (\nabla v^2)^* A^2 + A^1 (\nabla v^2)^*\delta A}dy
        \\
        & \leq C \norm{\delta A} \norm{\nabla z}\norm{\nabla v^2}_{L^\infty}.
    \end{split}
\end{equation*}
Integrating in time, it is possible to obtain
\begin{equation*}
    \begin{split}
        \int_0^T |\tilde{I}_1| dt \leq  C \norm{\nabla \delta v}_{L^2_T L^2} \norm{t^{1/2} \nabla v^2}_{L^2_T L^\infty}\norm{\nabla z}_{L^2_T L^2}
        \leq C(T) \norm{\nabla \delta v}_{L^2_T L^2}^2 + \frac{1}{8}\norm{\nabla z}_{L^2_T L^2}^2.
    \end{split}
\end{equation*}
Using \eqref{P} we can rewrite $Q^2$ as follows
\begin{equation*}
    Q^2=P^2(X^2)=(-\Delta)^{-1}\nabla \c (\rho^2 D_t u^2)(X^2)-\nabla \c \nabla (-\Delta)^{-1} (\mu^2\D u^2)(X^2).
\end{equation*}
Introducing this expression of $Q^2$ in $\tilde{I}_2$ and integration by parts in the second integral yield 
\begin{equation*} 
    \begin{split}
\tilde{I}_2=& \int_{\T^2}\delta A z (\nabla \Delta^{-1}\nabla\c)(\rho^2D_t u^2)(X^2)\nabla X^2dy
+\int_{\T^2}\delta A^* : \nabla z (\nabla \c \nabla \Delta^{-1})(\mu^2 \D{u^2})(X^2)dy
        \\
         \leq&  C\norm{\delta A}\norm{z}_{L^{2/\ga}}\norm{\sqrt{\rho^2}D_t u^2}_{L^\frac{2}{1-\ga}}\|\nabla X^2\|_{L^\infty}+
         C \norm{\delta A}\norm{\nabla z} \norm{(\nabla \c \nabla \Delta^{-1})(\mu^2 \nabla {u^2})}_{L^\infty}\\
        \leq& C\norm{\delta A}(\|\rho_0z\|+\norm{\nabla z})\norm{\sqrt{\rho^2}D_t u^2}^{1-\ga}\norm{\nabla D_t u^2}^\ga \log^{\ga/2}\Big(e+\frac{\norm{\nabla D_t u^2}^2}{\norm{\sqrt{\rho^2}D_t u^2}}\Big)(1+\|\nabla v^2\|_{L^1_TL^\infty})\\
        &+C \norm{\delta A}\norm{\nabla z} \norm{(\nabla \c \nabla \Delta^{-1})(\mu^2 \nabla {u^2})}_{L^\infty},
    \end{split}
\end{equation*}
where we have used Hölder inequality, together with Lemmas \ref{PoincaréFull} and \ref{LemDanchin2}. Integrating in time and using \eqref{deltaA} we obtain \begin{equation*}
    \begin{split}
        \int_0^T |&\tilde{I}_2| dt\leq C\norm{\nabla \delta v}_{L^2_TL^2}\norm{\nabla z}_{L^2_T L^2}\norm{t^{1/2} (\nabla \c \nabla \Delta^{-1})(\mu^2 \nabla {u^2})}_{L^2_T L^\infty} 
        \\
        + &C\norm{\nabla \delta v}_{L^2_TL^2}(\|\rho_0z\|+\norm{\nabla z}_{L^2_T L^2}) \Big\|t^{1/2}\norm{\sqrt{\rho^2}D_t u^2}^{1-\ga}\norm{\nabla D_t u^2}^\ga \log^{\ga/2}\Big(e\!+\!\frac{\norm{\nabla D_t u^2}^2}{\norm{\sqrt{\rho^2}D_t u^2}}\Big)\Big \|_{L^2_T}.
    \end{split}
\end{equation*}
From \eqref{aux4}, we  have that $t^{1/2}(\nabla \c \nabla \Delta^{-1})(\mu^2 \nabla {u^2})\in L^2_T L^\infty$. Then, standard bounds for the logarithm, as in \eqref{I7,2}, allow us to conclude 
\begin{equation*}
    \begin{split}
        \int_0^T |\tilde{I}_2|dt \leq C(T)\norm{\nabla \delta v}^2_{L^2_T L^2}+\frac{1}{8}\norm{\rho_0 z}^2_{L^2_T L^2}+\frac{1}{8}\norm{\nabla z}^2_{L^2_T L^2}.
    \end{split}
\end{equation*}
We proceed now with $\tilde{I}_3$. By Lemma \ref{Lw} with $p=\frac{2}{2-\ga}$, $q>\frac{2}{\gamma}$ and $r=\frac{2q}{2+q}$, and $p', r'$ the dual exponents,
\begin{equation*}
    \begin{split}
        \int_0^T |\tilde{I}_3|dt &\leq C \norm{\partial_t w}_{L^p_T L^r}\norm{\sr z}_{L^{p'}_T L^{r'}} \leq C(T) \norm{\nabla \delta v}_{L^2_T L^2}\norm{\sr z}_{L^{p'}_T L^{r'}}.
    \end{split}
\end{equation*}
Using Lemma \ref{LemDanchin2} it is possible to get
\begin{equation*}
    \begin{split}
        \int_0^T \norm{\sr z}_{L^{r'}}^{p'}dt &\leq \int_0^T \norm{\sr z}^{\frac{2(q-2)}{\ga q}}\norm{\nabla z}^{\frac{4}{\ga q}}\log^{\frac{2}{\ga q}}\Big(e+\frac{\norm{\nabla z}^2}{\norm{\sr z}^2}\Big)dt.
    \end{split}
\end{equation*}
We estimate the term above for the case when the log term is bounded,  otherwise, taking into account \eqref{log_bound}, it suffices to slightly modify the exponents.
Hölder's inequality  gives
\begin{equation*}
    \begin{split}
        \int_0^T \norm{\sr z}_{L^{r'}}^{p'}dt & \leq C\left (\int_0^T \norm{\sr z}^{\frac{2(q-2)}{\ga q-2}}dt\right)^{\frac{\ga q-2}{\ga q}}\left (\int_0^T \norm{\nabla z}^2 dt\right)^{\frac{2}{\ga q}} \\
        & \leq C(T) \norm{\sr z}_{L^\infty_T L^2}^{\frac{2(q-2)}{\ga q}}\norm{\nabla z}_{L^2_T L^2}^{\frac{4}{\ga q}}.
    \end{split} 
\end{equation*}

Introducing the bound above in $\tilde{I_3}$, recalling that $p'=\frac{2}{\gamma}$, yields
\begin{equation*}
    \begin{split}
        \int_0^T |\tilde{I}_3|dt &\leq C(T)\norm{\nabla \delta v}_{L^2_T L^2}\norm{\sr z}_{L^\infty_T L^2}^{\frac{q-2}{ q}}\norm{\nabla z}_{L^2_T L^2}^{\frac{2}{ q}} \\ &\leq C(T)\norm{\nabla \delta v}_{L^2_T L^2}^2 + \frac{1}{8} \norm{\sr z}_{L^\infty_T L^2}^2 + \frac{1}{8} \norm{\nabla z}_{L^2_T L^2}^2.
    \end{split}
\end{equation*}

The last term, $\tilde{I}_4$ is bounded using Hölder inequality and Lemma \ref{Lw} to obtain
\begin{equation*}
    \begin{split}
        \int_0^T |\tilde{I}_4|dt\leq C(T) \norm{\nabla \delta v}^2_{L^2_T L^2}+\frac{1}{8}\norm{\nabla z}^2_{L^2_T L^2}.
    \end{split}
\end{equation*}

Hence, integrating in time in \eqref{z} and joining all the bounds give
\begin{equation}\label{z.2}
    \norm{\sr z}^2_{L^\infty_T L^2}+\norm{\nabla z}^2_{L^2_T L^2} \leq C(T) \norm{\nabla \delta v}_{L^2_T L^2}^2. 
\end{equation}
Recall that $\delta v = w+z$, Lemma \ref{Lw} and \eqref{z.2} yields
\begin{equation*}
     \norm{\nabla \delta v}_{L^2_T L^2} \leq  \norm{\nabla z}_{L^2_T L^2} +  \norm{\nabla w}_{L^2_T L^2} \leq C(T)  \norm{\nabla \delta v}_{L^2_T L^2},
\end{equation*}
which implies that for $T \ll 1$, $ \norm{\nabla \delta v}_{L^2_T L^2}=0$. The energy estimates \eqref{z.2} yield $\norm{\sr z}_{L^\infty_T L^2} = 0=\norm{\nabla z}_{L^2_T L^2}$. Applying Lemma \ref{PoincaréFull} then leads to $\norm{z}_{L^2_T L^2} = 0$. From Lemma \ref{Lw}, $\norm{w}_{L^\infty_T L^2}=0$. Therefore, we can conclude that $v^1=v^2$ in $[0,T]\times \T^2$. We can now return to Eulerian coordinates, $u^1=u^2$, and the extension to any arbitrary $T>0$ follows from standard connectivity arguments. 

\section{Acknowledgements}

FG was partially supported by the Institute of Advanced Study and the Harish-Chandra Fund.
EGJ was partially supported by the RYC2021-032877 research grant (Spain). PLV was supported by an FPI grant from the Spanish Government PRE2022-105212. 
FG and EGJ were partially supported by the RED2022-134784-T funded by  MCIN/AEI/10.13039/50110001103. 
FG, EGJ and PLV were partially supported by the AEI project PID2022-140494NA-I00 (Spain). FG and PLV were partially supported by the Fundaci\'on de Investigaci\'on de la Universidad de Sevilla through the
grant FIUS23/0207 and acknowledge support
from IMAG, funded by MICINN through the Maria de Maeztu Excellence Grant CEX2020-001105-M/AEI/10.13039/501100011033.  


\bibliographystyle{acm}
\bibliography{references}
\end{document}